\pgfplotsset{compat=newest}
\renewcommand\nomgroup[1]{%
  \item[\bfseries
  \ifstrequal{#1}{A}{Decision variables of the platform}{%
  \ifstrequal{#1}{B}{Decision variables of the city planner}{%
  \ifstrequal{#1}{C}{Endogenous variables}{%
  \ifstrequal{#1}{D}{Exogenous parameters}{}}}}%
]}
\newcommand{\e}{\text{e}}
\newcommand{\HKD}{\text{HK}\$}
\newcommand{\hr}{\text{hr}}
\newtheorem{lemma}{Lemma}
\newtheorem{proposition}{Proposition}
\newtheorem{remark}{Remark}
\begin{document}
\begin{frontmatter}

\title{On-Demand Valet Charging for Electric Vehicles: Economic Equilibrium, Infrastructure Planning and Regulatory Incentives}

\author[hkust_address]{Zhijie Lai}
\ead{zlaiaa@connect.ust.hk}
\author[hkust_address]{Sen Li}
\ead{cesli@ust.hk}
\address[hkust_address]{Department of Civil and Environmental Engineering, The Hong Kong University of Science and Technology, \\Clear Water Bay, Kowloon, Hong Kong}

\begin{abstract}
Many city residents cannot install their private electric vehicle (EV) chargers due to the lack of dedicated parking spaces or insufficient grid capacity. This presents a significant barrier towards large-scale EV adoption. To address this concern, this paper considers a novel business model, {\em on-demand valet charging}, that unlocks the potential of under-utilized public charging infrastructure to promise higher EV penetration. In the proposed model, a platform recruits a fleet of couriers that shuttle between customers and public charging stations to provide on-demand valet charging services to EV owners at an affordable price. Couriers are dispatched to pick up low-battery EVs from customers, deliver the EVs to charging stations, plug them in, and then return the fully-charged EVs to customers. To depict the proposed business model, we develop a queuing network to represent the stochastic matching dynamics, and further formulate an economic equilibrium model to capture the incentives of couriers, customers as well as the platform. These models are used to examine how charging infrastructure planning and regulatory intervention will affect the market outcome. First, we find that the optimal charging station densities for distinct stakeholders are different: couriers prefer a lower density; the platform prefers a higher density; while the density in-between leads to the highest EV penetration as it balances the time traveling to and queuing at charging stations. Second, we evaluate a regulatory policy that imposes a tax on the platform and invests the tax revenue in public charging infrastructure. Numerical results suggest that this regulation can suppress the platform's market power associated with monopoly pricing, increase social welfare, and facilitate the market expansion. 
\end{abstract}

\begin{keyword}
    valet charging, electric vehicle, infrastructure planning, regulatory policy
\end{keyword}

\nomenclature[A, 01]{$p_v$}{The price of valet charging services}
\nomenclature[A, 02]{$w$}{The average hourly wage for couriers}
\nomenclature[B, 01]{$K$}{The number of charging stations}
\nomenclature[B, 02]{$p_t$}{The tax imposed on each valet charging service}
\nomenclature[C, 01]{$\lambda$}{Hourly arrival rate of valet charging customers}
\nomenclature[C, 02]{$\lambda_d$}{Hourly arrival rate of delivery requests in the delivery queue}
\nomenclature[C, 03]{$\bar{\lambda}$}{Hourly arrival rate of EVs in each charging station}
\nomenclature[C, 04]{$N$}{The number of couriers working for the platform}
\nomenclature[C, 05]{$N_i$}{The number of idle couriers}
\nomenclature[C, 06]{$M$}{The total public charger supply}
\nomenclature[C, 07]{$S$}{The number of chargers at each charging station}
\nomenclature[C, 08]{$\pi$}{The per-time payment to couriers}
\nomenclature[C, 08]{$p_c$}{The average per-delivery payment to couriers}
\nomenclature[C, 09]{$t_r$}{The average response time in the delivery queue}
\nomenclature[C, 10]{$t_p$}{The average pickup time in the delivery queue}
\nomenclature[C, 11]{$t_d$}{The average delivery time in the delivery queue}
\nomenclature[C, 12]{$t_w$}{The average waiting time in the charging queue}
\nomenclature[C, 13]{$c_v$}{The generalized cost of using valet charging}
\nomenclature[C, 14]{$c_{ev}$}{The generalized cost of charging electric vehicles}
\nomenclature[C, 15]{$\rho_d$}{The utilization rate of courier fleet.}
\nomenclature[C, 16]{$\rho_c$}{The occupancy rate of public chargers.}
\nomenclature[D, 01]{$\lambda_0$}{The total number of private vehicle owners}
\nomenclature[D, 02]{$N_0$}{The total number of potential couriers}
\nomenclature[D, 03]{$M_0$}{The nominal public charger supply}
\nomenclature[D, 04]{$A$}{The total area of the city}
\nomenclature[D, 05]{$C$}{The average charging time in the charging queue}
\nomenclature[D, 06]{$c_s$}{The generalized cost of using self-charging}
\nomenclature[D, 07]{$c_0$}{The generalized cost of refueling internal combustion engine vehicles}
\nomenclature[D, 08]{$w_0$}{The wage of outside option in the supply model}
\nomenclature[D, 09]{$t_c$}{The average charging time in the charging queue}
\nomenclature[D, 10]{$r$}{The prorated installation cost of each public charger}
\nomenclature[D, 11]{$\alpha$}{The customer's value of time assigned to the response time and pickup time}
\nomenclature[D, 12]{$\beta$}{The customer's value of time assigned to the waiting time and delivery time}
\nomenclature[D, 13]{$\tau$}{The percentage of EVs needed to be charged per unit time}
\nomenclature[D, 14]{$\theta$}{The parameter in the delivery time model}
\nomenclature[D, 15]{$\phi$}{The parameter in the pickup time model}
\nomenclature[D, 16]{$\epsilon_1$}{The sensitivity parameter of the upper nest in the demand model}
\nomenclature[D, 17]{$\epsilon_2$}{The sensitivity parameter of the lower nest in the demand model}
\nomenclature[D, 18]{$\eta$}{The sensitivity parameter in the supply model}

\end{frontmatter}

\printnomenclature

\section{Introduction} \label{section:introduction}
Electric vehicles (EVs) have significantly reshaped the landscape of urban sustainability, leading to higher energy efficiency and lower greenhouse gas emissions as opposed to their gasoline-powered counterparts. Many governments are devoted to accelerating the shift towards electrified urban mobility, announcing to ban the sale of fossil fuel vehicles in a near future \cite{reuters_france_2019}\cite{guardian_uk_2020}. With heavy subsidies from governments and awakening public awareness of environmental protection, people become more receptive to EVs. According to the International Energy Agency, the global stock of electric vehicles continues to surge and has reached 7.1 million as of 2019 \cite{iea_global_2020}. 

However, the era of e-mobility is yet to come.  The commercial success of EVs is on the premise of an ecosystem with ubiquitous charging infrastructure, which consists of residential chargers and workplace chargers for a long dwell-time recharge, and public chargers for occasional top-up \cite{levy_costs}. The major barrier towards mass adoption of EVs is the charging inconvenience arising from a few aspects. First, private charging infrastructure is not accessible to a large number of potential EV drivers. In dense cities where apartment dwelling is prevalent, installing a residential charger is not possible for plenty of people who do not have dedicated parking spaces. Second, for EV owners that solely rely on public charging stations, they have to drive miles away to plug in their EVs, wait for hours until the recharge is completed, and then fetch the vehicles with perfect timing to avoid over-stay penalty, which is inconvenient for those who live out of walking distance to a public charging station \cite{nyt_charger_2020}. Third, workplace charging may be avoid the unnecessary wait of EV owners during charging, but it may be infeasible for those who live in densely populated cities where parking is severely limited and prohibitively expensive\footnote{Take Hong Kong as an example, many commercial buildings in Hong Kong are high-rises with insufficient parking spaces and expensive parking fare \cite{EBHK}, which refrains most residents from driving to work. In fact, over 90\% of passenger trips are delivered by public transport system every day \cite{PTSS}.}. As a result, public charging infrastructure is limited to long-distance travel recharge or occasional battery top-up, leading to a lower utilization rate than expected. For instance, China locates the world’s largest charging station network, while its public charging stations are idle for 85\% of the time \cite{caixin_global_chinas_2018}. In the United States, over 90\% of the EV owners recharge their vehicles at home or workplace, other than at public charging stations \cite{INL_plugged_2015}. 

To address the aforementioned challenges, a large body of literature has investigated the deployment and operation of charging infrastructure to reduce the charging inconvenience. With an objective to maximize facility utilization, Xi et al. \cite{xi_simulation-optimization_2013} tackled charging stations siting problem in a simulation-optimization framework and asserted that the optimal locations are sensitive to the specific optimization criterion. Yang et al. \cite{yang_data-driven_2017} formulated the charger allocation problem as an integer linear programming and captured the charging dynamics using a queuing model. Interestingly, they showed that charger utilization will increase when extra waiting spots are provided. Huang and Kockelman \cite{huang_electric_2020} developed a network equilibrium model to identify the profit-maximizing charging stations placement considering the endogenously determined travel time and on-site charging congestion. Gan et al. \cite{gan_fast-charging_2020} introduced both spatial and temporal penalties to characterize the elastic demand in deploying fast charging stations. Mak et al. \cite{mak_infrastructure_2013} established a robust optimization framework to address the planning of battery-swapping stations. Sarker et al. \cite{sarker_optimal_2015} incorporated day-ahead scheduling, battery demand uncertainty, and electricity price uncertainty into the optimization model to address the operation and scheduling of battery-swapping stations. Widrick et al. \cite{widrick_optimal_2016} integrated the vehicle-to-grid (V2G) technique with battery-swapping stations and explored the optimal charging and discharging policy that maximize the total profit. Please refer to \cite{shen_optimization_2019} for a a comprehensive literature review.

Distinct from the above studies that focus on charging infrastructure planning and operation, a concept of Charging-as-a-Service (CaaS) has emerged recently. Guo et al. \cite{guo_caas_2020} proposed a business model that offers on-demand battery delivery services to electrify the Mobility-as-a-Service vehicles (e.g., ride-sourcing vehicles, taxis). They found that only 250 service vehicles can satisfy the battery delivery demand of 13000 electric taxis in NYC with a five-minute average waiting time. Zhang et al. \cite{zhang_mobile_2020} explored CaaS in a mobile manner, where mobile chargers offer on-site plug-in services.  They show that when reservation is introduced, accurate estimations on charging demand can be achieved, and charging demand across the network can be efficiently and effectively satisfied with the support of intelligent system-level decisions. Qiu and Du \cite{qiu_charging_2021} examined the dispatch and routing problem in providing EV-to-EV charging services, which allows a pair of EVs to cooperate with each other and exchange electricity on the move. They showed that the CaaS platform performs better in scenarios of low EV penetration. 

Although the above studies can facilitate transport electrification, few of them have fundamentally addressed the inconvenience of using public charging stations for long dwell-time EV recharge. In contrast, this paper aims to break down the barrier to using public charging stations as a daily charging option. Such charging style can eliminate the range anxiety for city dwellers who do not have access to private residential chargers. This will bypass the difficulty of installing private charging stations, unleash the immense but under-utilized potential in public charging infrastructure, and promote mass EV adoption among metropolitan residents.

To realize this vision, we propose a novel business model that provides on-demand valet charging services to EV owners, enabling them to conveniently utilize public charging stations for daily charging. The business model involves three entities: EV owners, couriers, and a valet charging platform. EV owners can request this service whenever needed and drop off their keys wherever convenient. Couriers are then dispatched to pick up the low-battery EVs from the customers, drive the EVs to charging stations, plug in the vehicles, and return the fully-charged EVs to customers. Couriers and EV owners are connected by a third-party platform that offers a user interface to EV owners and employs a fleet of couriers to provide on-demand valet charging services at an affordable price. With the deliver-to-door valet charging services, EV drivers are free from traveling back and forth to charging stations, making public charging as convenient as residential charging. As such, it offers a convenient charging option and enables millions of apartment dwellers without private chargers to embrace EVs. Since 2016, Luxe has partnered with Tesla and integrated charging service into valet parking \cite{luxe_demand}. However, it is offered as a luxury service exclusively for Tesla EVs. In contrast, our model is targeted at the entire EV community, which achieves economy of scale that can reduce the operational cost and improve the service quality. The key contributions of this paper are three-fold:
\begin{itemize}
    \item We propose the novel business model of {\em on-demand valet charging} that enables city residents without private chargers to adopt EVs and recharge EVs using public charging stations with minimal inconvenience. We formulate a mathematical model to characterize the interaction among stakeholders involved in the valet charging market. The model includes a queuing network that describes the matching dynamics among EVs, couriers, and charging stations, and an economic equilibrium model that captures the incentives of multiple market participants. Based on the proposed mathematical model, we find that the profit-maximizing optimum in the valet charging market always fall into the normal regime instead of the inefficient {\em wild goose chase} (WGC) regime \cite{castillo_surge_2017}.

    \item We investigate charging infrastructure planning under the proposed business model. We find that the interests of different stakeholders are not consistent: couriers receive higher surplus at a lower charging station density, whereas the platform prefers a higher density such that it enjoys a higher markup from the lower marginal cost. The optimal density that leads to the highest EV penetration is in-between where the time traveling to and waiting at charging stations are traded off. We also show that the monopoly platform has a strong market power that reaps most of the benefits associated with improved charging convenience, resulting in inefficient market outcomes.
    
    \item We explore a prospective regulatory policy that levies a tax on the platform for serving each customer and invests the tax revenue in public charging infrastructure. We show that the tax burden is primarily undertaken by the platform as opposed to EV owners. When the charger installation cost is in a certain range, imposing this tax can weaken the platform's market power, improve social welfare, and further facilitate EV adoption. Nonetheless, aside from regulation, the city planner needs to jointly redeploy charging stations to effectively bring more EVs into use.
\end{itemize}

The remainder of this paper is organized as follows. \autoref{section:preliminaries} provides a brief sketch of the proposed business model and some fundamental remarks. \autoref{section:formulation} formulates a mathematical model for the valet charging market. \autoref{section:infrastructure_planning} investigates how charging infrastructure planning will affect the market outcome. \autoref{section:taxation} introduces and verifies a potential regulatory policy for the proposed business model. \autoref{section:sensitivity_analysis} presents a sensitivity analysis to examine the robustness of economic insights. Finally, \autoref{section:conlusion} concludes key findings of this study and discusses possible directions for future work.

\section{Preliminaries} \label{section:preliminaries}

This section elucidates the business model of valet charging and elaborates the interaction among the platform, EV owners, couriers, and charging infrastructure. The platform dispatches a fleet of couriers to provide app-based {\em on-demand} chauffeur charging services for EV owners. Customers order valet charging services via the user interface, specify the pickup location, then wait for an idle courier to pick up the vehicle. Drivers can register as couriers and offer EV delivery services contingent on their schedules. Couriers are paid on {\em per-delivery} basis. Resembling designated drivers,\footnote{The term ``designated driver'' refers to a driver who is dedicated to driving alcohol consumers home safe in their own cars \cite{ditter_effectiveness_2005}. This concept is intended to discourage drunk driving.  Designated drivers typically come and forth between customers' home and restaurants.} couriers typically shuttle between customers and charging stations by public transit, bicycle, or scooter. Upon receiving each request, the platform dispatches a vacant courier to pick up and drive the low-battery EV to a nearby charging station. The charging process will not be interrupted until the battery is fully recharged unless an early recall is issued. Upon charging completion, another vacant courier will be sent to drive the EV back to its owner. The platform charges a service fare from customers, pays wages to couriers and keeps the difference between service fare and courier wage as a commission to make a profit. \autoref{fig:business_model} demonstrates the interaction among different entities in the valet charging market. We have fundamental remarks as below.

\begin{figure}[ht]
    \centering
    \includegraphics[width=0.5\textwidth]{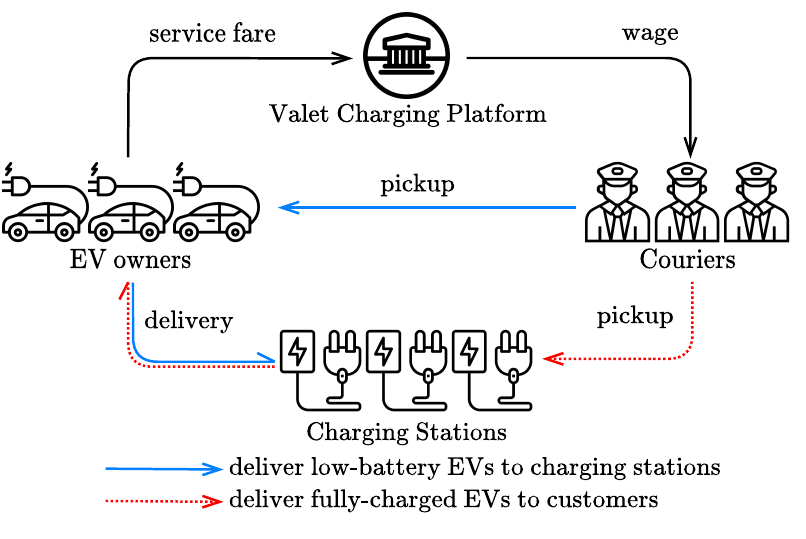}
    \caption{A schematic diagram for the business model of valet charging.}
    \label{fig:business_model}
\end{figure}

\begin{itemize}
    \item The remaining power of an EV that orders the valet charging service should be adequate for a trip to the nearest charging station. This is not a strong assumption. Due to  range anxiety, many EV drivers are inclined to recharge their cars proactively although there remains adequate battery power for another trip \cite{franke_understanding_2013}. If the remaining battery of an EV does not suffice to travel to the closest charging station, towing service is needed for the stranded vehicle, but this is beyond the service scope of valet charging.

    \item As highlighted in \autoref{fig:business_model}, there are two delivery trips in the process of valet charging services: first deliver low-battery EVs to charging stations and then return fully-charged EVs to customers. We argue that two delivery trips can be accomplished by different couriers. For one thing, it usually takes several hours to fully recharge an EV battery.\footnote{Prevailing Alternating Current (AC) Level 2 chargers supply power at 1.4 - 19.2kW, whereas the battery capacity of a standard-range Tesla Model 3 is 50kWh.} It is clearly a waste of time for couriers to wait hours in charging stations. For another, customers' arrival and departure are random. It is inefficient to designate the same courier to drive the fully-charged EV back to its owner. Instead, couriers should be flexibly dispatched to serve another delivery trip, which benefits both the platform and couriers: the platform will enjoy the higher labor efficiency as there are more dispatchable couriers in the system; couriers will idle a shorter time and receive a higher income since they are paid on per-delivery basis.
    
    \item EV charging may suffer from delay due to congestion at charging stations. We argue that when such delay occurs, couriers should be liberated from the queue and a dedicated coordinator should be employed to manage the waiting EVs at each charging station. This is well-reasoned and necessary: a coordinator is in need to settle the rotation between fully-charged EVs and queuing EVs and take care of the keys. Couriers are not appropriate candidates for these jobs because of the uncertainty and stochasticity in the customer arrival and departure. The advantages of this strategy are two-fold. First, couriers can be free from being stuck in line and therefore receive higher wages. Second, dedicated coordinators can promptly unplug the fully-charged EVs without delay and hand over chargers to the waiting vehicles, which can mitigate the ``overstay'' problem in charging stations \cite{zeng_solving_2020}.
    
    \item Public charging stations are typically operated by government agencies or third-party companies. For the valet charging customers, charging station operators charge an electricity fare, and parking lot operators charge a parking fee, while the platform independently charges a service fare. We consider the electricity fare and parking fee as exogenous and only focus on the pricing strategy of the valet charging platform. The case that the platform operates its own charging stations dedicated to valet charging services is beyond the scope of this paper and will be discussed in future research.
\end{itemize}

\section{Formulation} \label{section:formulation}

This section presents a queuing network to capture the matching dynamics among customers, couriers, and charging stations, followed by an economic equilibrium model to predict the market outcome.

\subsection{Queuing Network}

\begin{figure}[ht]
    \centering
    \includegraphics[width=0.8\textwidth]{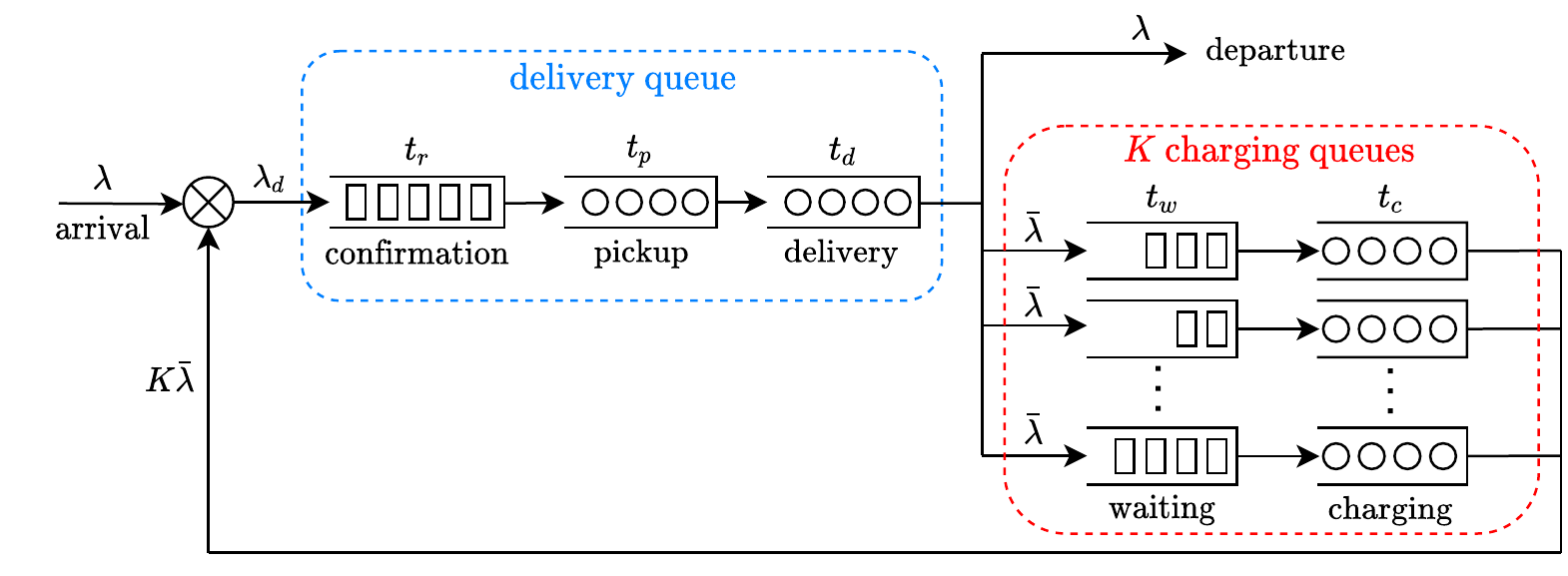}
    \caption{Queuing network of the valet charging market.}
    \label{fig:queuing_network}
\end{figure}
As shown in \autoref{fig:queuing_network}, the service process is characterized by a queuing network that consists of a delivery queue and multiple charging queues (one at each charging station). The delivery queue captures the matching process between EVs and couriers. From entering to leaving the queue, each EV sequentially experiences a response period (time for the platform to confirm the order and dispatch a courier, denoted by $t_r$), a pickup period (time for couriers to pick up the EVs, denoted by $t_p$), and a delivery period (time for couriers to deliver the vehicle, denoted by $t_d$). On the other hand, the charging queue captures the matching process between EVs and charging stations, where each EV experiences a waiting period (time for waiting in line for an available charger, denoted by $t_w$) and a charging period (time for an EV to be fully recharged, denoted as $t_c$) before it exits from this queue. Through the lifetime of valet charging service, the low-battery EV first enters the delivery queue and gets delivered to the nearest charging station. It then enters a charging queue and waits for an available outlet. After the charging is completed, it enters the delivery queue again and is returned to its owner. During the entire process, each EV enters the delivery queue twice, once from the EV owner's residence to the charging station, and once from the charging station back to the EV owner's residence. The service follows the first-come-first-served (FCFS) principle, and both queues are assumed to be infinite, i.e., there is no limitation on the number of permissible customers in each queue.

In the proposed queuing network model, we assume that the charging infrastructure is uniformly distributed across the city such that the waiting time at distinct charging stations have the same average value. We argue that an aggregate model that predicts the average market outcome suffices our purpose as we primarily focus on infrastructure planning and regulatory policy at the city level\footnote{The spatial heterogeneity of valet charging demand and charging infrastructure distribution is left for future research.}.

\subsubsection{Delivery Queue}
In the delivery queue, couriers act as ``servers'' and each delivery trip is defined as a ``job''. Denote $N$ as the number of couriers. Let $\lambda$ represent the arrival rate (per unit time) of new-coming valet charging customers, and denote $\lambda_d$ as the arrival rate (per unit time) of delivery request in the delivery queue. Since each EV will enter the delivery queue twice (each delivery request is initiated from either a low-battery EV or a fully-charged EV), we always have $\lambda_d = 2 \lambda$ in steady state. For each EV that enters the delivery queue, its dwell time consists of the response time $t_r$, the pickup time $t_p$, and the delivery time $t_d$ (see \autoref{fig:queuing_network}). These are endogenous variables that depend on the valet charging demand, courier supply and charging station density, which will be modeled below.

The response time $t_r$ can be derived based on the property of delivery queue. We assume that the jobs' arrival follows a Poisson process with rate $\lambda_d$, and the service time is exponentially distributed with mean $t_p + t_d$. This leads to a standard M/M/N queue, whose average waiting time is given in closed-form by Erlang C Formula \cite{hillier_introduction_2004}:
\begin{equation} \label{eqn:original_response_time}
    t_r = \frac{\rho_d Q_0 \big(\lambda_d (t_p + t_d )\big)^N}{N!(1-\rho_d)^2 \lambda_d},
\end{equation}
where $\rho_d = \lambda_d (t_p + t_d )/N < 1$ is the utilization rate of courier fleet, i.e., the expected fraction of time that each courier is occupied, and $Q_0$ represents the probability that there is no customer waiting in the queue:
\[Q_0 = \left[\sum_{n=0}^{N-1}\dfrac{\big(\lambda_d (t_p + t_d)\big)^n}{n!} + \dfrac{\big(\lambda_d (t_p + t_d)\big)^N}{N!}\cdot\dfrac{1}{1-\rho_d}\right]^{-1}.\]
Equation \eqref{eqn:original_response_time} is expressed explicitly. However, since summation and factorial terms are involved, the computation  is tedious and the formula is intractable for further analysis. In this regard, we adopt the formula in \cite{sakasegawa_approximation_1977} to approximate the mean waiting time in the delivery queue:
\begin{equation} \label{eqn:response_time}
    t_r = (t_p + t_d) \dfrac{\rho_d^{\sqrt{2N + 2} - 1}}{N(1 - \rho_d)} = \dfrac{1}{\lambda_d} \cdot \dfrac{\rho_d^{\sqrt{2N + 2}}}{1 - \rho_d}.
\end{equation}
We emphasize that this formula is a sufficiently exact approximation of the mean waiting time for an M/M/N queue. The performance of this approximation is evaluated in \ref{appendix:approximation_evalution}.

The pickup time $t_p$ depends on the availability of idle couriers. It captures the time elapsed from a delivery request being issued to the EV being picked up. This duration is primarily determined by the travel distance between the new-coming customers and her nearest idle courier. It is well-established that the pickup distance is inversely proportional to the square root of the density of idle couriers \cite{arnott_taxi_1996} \cite{li_regulating_2019}, thereby we have:
\begin{equation} \label{eqn:pickup_time}
    t_p = \dfrac{\phi}{\sqrt{N_i/A}},
\end{equation}
where $\phi$ is a model parameter, $A$ is the total area of the city, and $N_i$ denotes the number of idle couriers. This equation implies that the EV will be picked up more promptly with more dispatchable couriers per unit area. It follows the intuition that the distance between a customer and the closest idle courier depends on the average distance between any two nearby idle couriers (refer to \cite{arnott_taxi_1996} and \cite{li_regulating_2019} for more justification). Based on the Little's Law, the number of idle couriers is given by
\begin{equation} \label{eqn:idle_courier}
    N_i = N - \lambda_d t_p - \lambda_d t_d = N - 2 \lambda t_p -2 \lambda t_d,
\end{equation}
where the second term on the right-hand side of (\ref{eqn:idle_courier}) denotes the number of couriers on the way to pick up EVs, and the third term represents the number of couriers that are delivering EVs. Note that $N_i > 0$ is required to stabilize the queue. 

The delivery time $t_d$ depends on the number of charging stations per unit area. In valet charging services, the average delivery time $t_d$ equals the average travel time between the customer and the nearest charging station, which is inversely proportional to the square root of charging station density \cite{ahn_analytical_2015}:
\begin{equation} \label{eqn:delivery_time}
    t_d = \theta \sqrt{\dfrac{A}{K}},
\end{equation}
where $\theta$ is a model parameter and $K$ is the number of charging stations in the city. The rationale behind this is straightforward. Similar to the derivation of (\ref{eqn:pickup_time}), if the charging stations are located across the city according to a given distribution, the expected distance to the closet charging station is proportional to the average distance between any two nearby charging stations, which is further proportional to $\sqrt{A/K}$.

\subsubsection{Charging Queue} 
The charging queue at each charging station models the matching dynamics between EVs and chargers. After arriving at the charging station, each EV joins the charging queue and experiences a waiting time $t_w$ and a charging time $t_c$. We assume that the EVs' arrival is a Poisson process and the charging time has an exponential distribution with mean $t_c$. Note that $t_c$ is regarded as an exogenous parameters. Consider there are $K$ charging stations in total and each of them has $S$ charging outlets. This constitutes $K$ homogeneous M/M/S queues (one at each charging station), and the arrival rate of EVs to each queue is $\bar{\lambda} = \lambda/K$. Similar to \eqref{eqn:response_time}, the waiting time $t_w$ is also approximated by
\begin{equation} \label{eqn:waiting_time}
    t_w = t_c \dfrac{\rho_c^{\sqrt{2S + 2} - 1}}{S(1 - \rho_c)} = \dfrac{1}{\bar{\lambda}} \cdot \dfrac{\rho_c^{\sqrt{2S+2}}}{1 - \rho_c},
\end{equation}
where $\rho_c = \bar{\lambda} t_c/S < 1$ is the occupancy of chargers. The assessment of this approximation is also presented in \ref{appendix:approximation_evalution}.


\subsection{Market Equilibrium Model}

\subsubsection{Customer Incentives}

Customers decide whether to use valet charging services according to the generalized cost, which is the weighted sum of service fare and the total waiting time. The generalized cost $c_v$ is given by
\begin{equation} \label{eqn:valet_cost}
    c_v = p_v + \alpha (t_r + t_p) + \beta (2 t_d + t_w),
\end{equation}
where $p_v$ is the average price for each valet charging service, and $\alpha > \beta > 0$ represents the customer's value of time. The non-monetary component in \eqref{eqn:valet_cost} reflects the time cost spent within the service. In particular, from placing a request to receiving the returned vehicle, the elapsed time can be decomposed into the following segments: 
(a) waiting to be confirmed, i.e., $t_r$, (b) waiting to be picked up, i.e., $t_p$, (c) delivering to a charging station, i.e., $t_d$, (d) waiting at the charging station for an idle charger, i.e., $t_w$, (e) completing the charging process, i.e., $t_c$, (f) waiting to be confirmed again, i.e., $t_r$, (g) waiting to be picked up again, i.e., $t_p$, and (h) delivering back to the customer, i.e., $t_d$. Together, the customer waiting time before the first EV pickup is $t_r + t_p$, whereas the waiting time after the EV being taken away  is $2t_d + t_w + t_c + t_r + t_p$. Since $t_c$ is exogenous, without loss of generality, it can be normalized to zero in \eqref{eqn:valet_cost}.\footnote{Ev owners have to wait $t_c$ for the charging to be completed regardless of using valet charging or self-charging. In this sense, this term can be normalized to zero in the discrete choice model (which will be presented later), since it appears in the cost of valet charging as well as self-charging. Note that we only treat $t_c$ as $0$ in \eqref{eqn:valet_cost}, not in other equations.} In this case, the post-pickup waiting time can be simplified as $2t_d + t_w + t_r + t_p$. Crucially, customers perceive the pre-pickup waiting time and the post-pickup waiting time differently: they tend to be impatient when waiting for the courier's arrival. However, after the EV is picked up, customers become less sensitive to the time when the vehicle is returned. For this reason, we let $\alpha > \beta$. Note that since $t_r + t_p$ appears twice (once before pickup and once after pickup), we can combine them as a single  term in \eqref{eqn:valet_cost} for notation brevity.

The generalized cost $c_v$ determines how many EV owners will adopt valet charging. This relation can be captured in the following two steps:
\vspace{-0.3cm}
\begin{itemize}
    \item In the first step,  private vehicle owners decide whether to purchase EVs or fossil fuel vehicles based on the price differential and the charging cost of EVs. We treat price differentials as exogenous, and model the EV charging cost, $c_{ev}$, as the composite of valet charging cost $c_v$ and self-charging cost $c_s$.\footnote{We assume that EV owners charge their vehicles by either valet charging or self-charging.} We assume that $c_s$ is exogenous in this study. Obviously, $c_{ev}$ increases with $c_v$, and $c_{ev}$ degrades to $c_s$ in the absence of valet charging services, i.e.,
    \[\dfrac{\mathrm{d } c_{ev}}{\mathrm{d } c_v} > 0 \quad \text{and} \quad \lim_{c_v \rightarrow \infty} c_{ev} = c_s.\]
    The cost $c_{ev}$ determines the EV penetration rate $P_{ev}$ (the percentage of private car owners who choose EVs instead of fossil fuel vehicles), which is decreasing in $c_{ev}$ and therefore $c_v$.

    \item In the second step,  EV owners decide whether to charge their low-battery vehicles using valet charging services or by themselves. Denote $P_{vc}$ as the percentage of EV owners that use valet services. It should be a decreasing function of $c_v$, which indicates that valet charging customers will be deterred by a high price or long waiting time. 
\end{itemize}
Based on the above discussion, we can summarize the valet charging demand as a generic function $F_v(c_v)$:
\begin{equation} \label{eqn:F_v}
    \lambda = \tau \lambda_0 P_{ev} P_{vc} = \tau \lambda_0 F_v(c_v),
\end{equation}
where $\tau$ is the percentage of EVs that run out of power per unit time, $\lambda_0$ is the total number of potential customers owning or intending to purchase a private vehicle (either EV or conventional fossil fuel vehicle), and $F_v(c_v) = P_{ev} P_{vc}$ characterizes the portion of private vehicle owners that adopt EVs {\em and} use valet charging services. As such, $\lambda_0 F_v(c)$ refers to the total number of EV owners that use valet charging as a daily charging option. By assuming that a certain percentage $\tau$ of EVs will need to be recharged per unit time, we can express the arrival rate of valet charging customers as $\tau\lambda_0 F_v(c_v)$. Interestingly, by the definition of \eqref{eqn:F_v}, we notice that the EV penetration is maximized when the valet charging demand is maximized, because both $P_{ev}$ and $\lambda$ decrease with $c_v$. This property can be used to examine how public policies will affect the valet charging market, and further influence the EV penetration.

We emphasize that the generalized cost $c_v$ varies among customers. Each element in \eqref{eqn:valet_cost} is random and $c_v$ only represents the average value. Here we implicitly assume that the arrival rate of valet charging demand is uniquely determined by the average cost and does not depend on the higher-order moments of the cost distribution among customers. The nested-logit model \cite{train_discrete_2009} is a special case of \eqref{eqn:F_v} that satisfies this assumption.

\subsubsection{Courier Incentives}
Couriers will receive a per-time payment $\pi$ from the platform for providing valet charging services. In this case, the courier's average per-delivery payment $p_c$  can be derived as  $p_c=\pi x(N_i, K)$, where $x(N_i, K)$ is the average travel time for each delivery. This  travel time  is the sum of pickup time (\ref{eqn:pickup_time}) and delivery time (\ref{eqn:delivery_time}), thus depending on $N_i$ and $K$. Couriers  typically decide weather to join the platform based on the long-term expected earning instead of the per-trip income \cite{banerjee_pricing_2015}. Couriers' average wage $w$ is collectively decided by the delivery demand $\lambda_d$, the average per-delivery payment $p_c$, and the number of couriers $N$. It equals the total payment to all couriers divided by the number of couriers, i.e., $w = p_c \lambda_d / N= \pi x(N_i, K)\lambda_d / N$.
Therefore, the courier supply can be written as a function of $w$:
\begin{equation} \label{eqn:F_c}
    N = N_0 F_c(w),
\end{equation}
where $N_0$ is the number of overall potential couriers (i.e., drivers looking for a job), and $F_c(w)$ is the portion of drivers that enter the valet charging industry. $F_c(\cdot)$ is assumed to be strictly increasing, reflecting that more drivers would engage in the business if the expected income is higher. We emphasize that we do not assume each courier has the same wage. In general, different couriers may receive different earnings. To derive equation \eqref{eqn:F_c}, we only need to assume that the total number of couriers depends on the average wage instead of any higher-order moments of the wage distribution. The well-established logit model is a special case of \eqref{eqn:F_c}.

\subsubsection{Platform Decisions}

We consider a monopolistic platform that determines the service fare $p_v$ and the courier wage $w$ to maximize its profit, i.e., the difference between the gross revenue from customers ($\lambda p_v$) and the total salary expenditure. The latter consists of two components: (a) total payment to couriers, i.e., $N w$, (b) payment to dedicated coordinators, i.e., $KC$ (recall the ``dedicated coordinator'' strategy in \autoref{section:preliminaries}). Here $C$ is regarded as a constant that represents the per-time wage for the coordinator in each charging station. The platform's decisions are subject to the demand model \eqref{eqn:F_v}, the supply model \eqref{eqn:F_c}, and the queuing network model \eqref{eqn:response_time},\eqref{eqn:pickup_time},\eqref{eqn:delivery_time},\eqref{eqn:waiting_time}. For each exogenous $K$, the profit-maximizing problem is cast as follows:
\begin{equation} \hspace{-3cm} \label{eqn:valet_profit_maximization}
    \max_{p_v, \pi} \quad \lambda p_v - N w - K C 
\end{equation}\
\begin{subnumcases} {\label{eqn:valet_constraint}}
    \lambda = \tau\lambda_0 F_v \big(p_v + \alpha (t_r + t_p) + \beta (2 t_d + t_w) \big) \label{eqn:valet_constraint_demand}\\
    N = N_0 F_c\left(w\right) \label{eqn:valet_constraint_supply}\\
    t_r = \dfrac{\rho_d^{\sqrt{2N + 2}}}{2\lambda(1 - \rho_d)} \label{eqn:valet_constraint_response_time}\\
    t_p = \dfrac{\phi}{\sqrt{(N - 2 \lambda(t_p + t_d))/A}} \label{eqn:valet_constraint_pickup_time}\\
    t_w = \dfrac{K \rho_c^{\sqrt{2S+2}}}{\lambda(1 - \rho_c)}, \label{eqn:valet_constraint_waiting_time}
\end{subnumcases}
where \eqref{eqn:valet_constraint_demand} specifies the valet charging demand, \eqref{eqn:valet_constraint_supply} gives the courier supply, and \eqref{eqn:valet_constraint_response_time}-\eqref{eqn:valet_constraint_waiting_time} defines the endogenous response time, pickup time, and waiting time, respectively. The delivery time is exogenous and only depends on $K$. It is important to note that the queuing network implicitly imposes an upper bound on the number of customers:
\begin{equation} \label{eqn:demand_upper_bound}
    \lambda < \min \left\{KS/t_c \, , \, N / (2(t_p + t_d))\right\},
\end{equation}
which requires the arrival rate of valet charging customers should not exceed the service capacity of both the delivery queue and charging queue.

\subsubsection{Solution Properties}
To solve \eqref{eqn:valet_profit_maximization}, we insert \eqref{eqn:valet_constraint_response_time}-\eqref{eqn:valet_constraint_waiting_time} into \eqref{eqn:valet_constraint_demand}, plug (\ref{eqn:valet_constraint_demand}) and (\ref{eqn:valet_constraint_supply}) into the objective function, and change the decision variables to $\lambda$ and $N$, which transforms \eqref{eqn:valet_profit_maximization} into an unconstrained optimization problem as follows:
\begin{equation} \label{eqn:valet_profit_maximization_unconstrained}
    \max_{\lambda, N} \quad \lambda \left( c_v(\lambda) - \alpha(t_r+t_p) - \beta(2 t_d + t_w)\right ) - N w(N) - KC,
\end{equation}
in which $c_v(\lambda)$ denotes the inverse demand model, $w(N)$ denotes the inverse supply model, $t_r$, $t_p$ and $t_w$ are functions of $\lambda$ and $N$. Note that $t_p$ is a function of $N_i$ and it may have multiple solutions even when $\lambda$ and $N$ are given. To see this, we consider the market outcome under each pair of $\lambda$ and $N$. Given the demand and supply, we can combine \eqref{eqn:pickup_time} and \eqref{eqn:idle_courier} to derive the number of idle couriers as:
\begin{equation} \label{eqn:fixed_point_equation}
    N_i = N - 2 \lambda \left( \dfrac{\phi}{\sqrt{N_i/A}} + t_d \right),
\end{equation}
which is a cubic equation of $\sqrt{N_i}$. To guarantee the existence of positive roots, $\lambda$ and $N$ must satisfy
\begin{equation} \label{eqn:root_existence_condition}
    (N - 2 \lambda t_d)^{\frac{3}{2}} - \sqrt{27A} \phi \lambda \geq 0.
\end{equation}
Equation \eqref{eqn:fixed_point_equation} has two positive roots if strict inequality holds in \eqref{eqn:root_existence_condition}. These two roots correspond to two different market outcomes under the same demand and supply. However, we show that the larger positive root will always lead to a higher profit, which indicates that the smaller root can be excluded. Thus, the mapping from $\lambda$ and $N$ to $t_p$ is well-defined. Formally, this can be summarized as the  following proposition. 

\begin{proposition} \label{prop:optimal_condition}
    At the optimum of \eqref{eqn:valet_profit_maximization_unconstrained}, the market outcome must satisfy $N_i \geq \dfrac{N - 2 \lambda t_d}{3}$.
\end{proposition}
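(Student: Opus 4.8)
The plan is to reduce the fixed-point equation \eqref{eqn:fixed_point_equation} to a cubic, and then show that among its admissible positive roots the profit in \eqref{eqn:valet_profit_maximization_unconstrained} is always maximized by the largest one. First I would substitute $u := \sqrt{N_i}$ into \eqref{eqn:fixed_point_equation}, obtaining $f(u) := u^3 - (N - 2\lambda t_d)\,u + 2\lambda\phi\sqrt{A} = 0$. At any market outcome the stability condition $\rho_d < 1$ forces $N_i = N - 2\lambda(t_p + t_d) > 0$, hence $N - 2\lambda t_d > 0$, so $f$ has a single positive critical point $u^\star := \sqrt{(N - 2\lambda t_d)/3}$, is strictly decreasing on $(0,u^\star)$ and strictly increasing on $(u^\star,\infty)$, with $f(0) = 2\lambda\phi\sqrt{A} > 0$ (assuming $\lambda > 0$; the case $\lambda = 0$ is trivial since then $N_i = N$). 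A direct computation gives $f(u^\star) = 2\lambda\phi\sqrt{A} - \tfrac{2}{3\sqrt{3}}(N - 2\lambda t_d)^{3/2}$, which is non-positive precisely under condition \eqref{eqn:root_existence_condition}; combined with $f(\sqrt{N - 2\lambda t_d}) = 2\lambda\phi\sqrt{A} > 0$, this shows that the two positive roots satisfy $u_- \le u^\star \le u_+ < \sqrt{N - 2\lambda t_d}$, so both give genuine market outcomes with $\rho_d \in (0,1)$, and the larger one already obeys $N_i = u_+^2 \ge (u^\star)^2 = (N - 2\lambda t_d)/3$.

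It then remains to show the optimum selects $u_+$. I would fix $\lambda$ and $N$ and identify which terms of the objective in \eqref{eqn:valet_profit_maximization_unconstrained} depend on the choice of root. The terms $\lambda\, c_v(\lambda)$, $N w(N)$ and $KC$ do not; neither does $\lambda\beta(2t_d + t_w)$, since $t_d$ depends only on $K$ and $t_w$ only on $\lambda$, $K$ and $S$ through \eqref{eqn:valet_constraint_waiting_time}. Hence, for fixed $(\lambda,N)$ with $\lambda > 0$ and $\alpha > 0$, maximizing the objective over the admissible roots is equivalent to minimizing $t_p + t_r$. Now $t_p = \phi\sqrt{A}/\sqrt{N_i}$ is strictly decreasing in $N_i$; consequently $\rho_d = 2\lambda(t_p + t_d)/N$ is strictly decreasing in $N_i$; and $t_r = \rho_d^{\sqrt{2N+2}}/\bigl(2\lambda(1-\rho_d)\bigr)$ is strictly increasing in $\rho_d$ on $(0,1)$ (numerator increasing, denominator decreasing), hence also strictly decreasing in $N_i$. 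Therefore $t_p + t_r$ is strictly decreasing in $N_i$, so the larger root $u_+$ yields the strictly higher profit. Consequently, at any optimum of \eqref{eqn:valet_profit_maximization_unconstrained} the associated idle-courier count equals $u_+^2 \ge (N - 2\lambda t_d)/3$; when \eqref{eqn:root_existence_condition} holds with equality the positive root is unique and equals $u^\star$, so the bound holds with equality. This exhausts all cases.

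The main obstacle I anticipate is the monotonicity chain in the second paragraph, in particular verifying that $t_r$ is monotone in $N_i$: this requires composing the dependence of $t_r$ on $\rho_d$ with that of $\rho_d$ on $t_p$ and of $t_p$ on $N_i$, while ensuring that $\rho_d$ stays in $(0,1)$ over the relevant range — which is exactly what the bound $u_\pm < \sqrt{N - 2\lambda t_d}$ from the cubic analysis supplies. The remainder is bookkeeping: the algebraic identity for $f(u^\star)$, its equivalence with \eqref{eqn:root_existence_condition}, and the observation that only the $-\lambda\alpha(t_r + t_p)$ term of the reformulated objective is sensitive to the root selection.
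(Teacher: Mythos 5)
Your proposal is correct and follows essentially the same route as the paper's proof: fix $(\lambda,N)$, note that only $t_p$ and $t_r$ (both strictly decreasing in $N_i$, with $t_w$, $t_d$, and the demand/supply terms independent of the root) are affected by which root of the cubic is chosen, so the larger root — which lies above $\sqrt{(N-2\lambda t_d)/3}$ — yields the strictly higher profit. The paper phrases this as the price $p_v$ being higher at the same generalized cost, while you phrase it as minimizing the $-\lambda\alpha(t_r+t_p)$ term in \eqref{eqn:valet_profit_maximization_unconstrained}; these are the same argument, and your explicit cubic analysis and boundary/degenerate cases are just added bookkeeping.
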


Please see \ref{appendix:proof_prop_optimal_condition} for the proof. \autoref{prop:optimal_condition} gives a sufficient condition for the solution to \eqref{eqn:valet_profit_maximization_unconstrained}. Given $\lambda$ and $N$, choosing the larger $N_i$ will lead to a shorter pickup time, which enables the platform to raise the price and consequently increase its profit. Hence, the optimal solution is achieved only when the larger root of \eqref{eqn:fixed_point_equation} is selected. From \autoref{prop:optimal_condition}, we further show that the profit-maximizing optimum demonstrates the following property: 

\begin{lemma} \label{lemma:WGC}
    The optimum of \eqref{eqn:valet_profit_maximization} always falls in a regime where $\dfrac{\partial t_p}{\partial \lambda} > 0$, i.e., under a fixed supply $N$, the pickup time $t_p$ increases with the demand $\lambda$.
\end{lemma}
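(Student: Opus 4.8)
The plan is to compute $\partial t_p/\partial\lambda$ along the queuing-network solution by implicit differentiation of the cubic fixed-point equation, and then read off its sign from \autoref{prop:optimal_condition}. First I would introduce $u:=\sqrt{N_i}$ and recall that eliminating $t_p$ between \eqref{eqn:pickup_time} and \eqref{eqn:idle_courier} turns \eqref{eqn:fixed_point_equation} into the depressed cubic
\[
    g(u,\lambda)\;:=\;u^3-(N-2\lambda t_d)\,u+2\lambda\phi\sqrt{A}\;=\;0,
\]
where $N$ is held fixed and $t_d=\theta\sqrt{A/K}$ is exogenous. Since \eqref{eqn:pickup_time} gives $t_p=\phi\sqrt{A}/u$, we have $\partial t_p/\partial\lambda=-\phi\sqrt{A}\,u^{-2}\,\partial u/\partial\lambda$, so the claim reduces to showing $\partial u/\partial\lambda<0$ at the optimum.

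Next I would differentiate $g(u,\lambda)=0$ implicitly in $\lambda$, obtaining
\[
    \frac{\partial u}{\partial\lambda}\;=\;-\frac{g_\lambda}{g_u}\;=\;-\frac{2t_d u+2\phi\sqrt{A}}{3u^2-(N-2\lambda t_d)} .
\]
The numerator $g_\lambda=2t_d u+2\phi\sqrt{A}$ is strictly positive, so $\partial t_p/\partial\lambda$ shares the sign of $g_u=3u^2-(N-2\lambda t_d)$: if $g_u>0$ then $\partial u/\partial\lambda<0$ and $\partial t_p/\partial\lambda>0$, whereas $g_u<0$ is exactly the \emph{wild goose chase} (smaller) root of \eqref{eqn:fixed_point_equation}. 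Now \autoref{prop:optimal_condition} states that at the optimum $N_i\ge(N-2\lambda t_d)/3$, i.e.\ $3u^2\ge N-2\lambda t_d$, i.e.\ $g_u\ge0$; this already rules out the WGC branch and yields $\partial t_p/\partial\lambda\ge0$.

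The remaining work is to promote $\ge$ to the strict inequality, equivalently to exclude the degenerate case $g_u=0$, which is the one genuinely delicate point. Note $g_u=0$ forces $u=u^\star:=\sqrt{(N-2\lambda t_d)/3}$, i.e.\ equality in \eqref{eqn:root_existence_condition}: the two candidate market outcomes coalesce into a double root and the map $\lambda\mapsto t_p$ has a vertical tangent, so implicit differentiation is not valid there. I would dispose of this with a boundary argument: this threshold is the upper edge $\lambda=\bar\lambda(N)$ of the feasible demand range for fixed $N$ (beyond it \eqref{eqn:fixed_point_equation} has no positive root), and as $\lambda\uparrow\bar\lambda(N)$ along the larger root one has $\partial u/\partial\lambda\to-\infty$, hence $\partial t_p/\partial\lambda\to+\infty$ and, through $\rho_d=2\lambda(t_p+t_d)/N<1$, also $\partial t_r/\partial\lambda\to+\infty$; since $p_v=c_v(\lambda)-\alpha(t_r+t_p)-\beta(2t_d+t_w)$ while the cost $Nw(N)+KC$ is independent of $\lambda$, the profit $\lambda p_v$ is strictly decreasing in $\lambda$ near $\bar\lambda(N)$, so the optimum cannot sit at the threshold and must have $g_u>0$. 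I expect this exclusion of the double-root boundary (verifying that profit strictly falls as one approaches it) to be the main obstacle; everything else is the one-line implicit-differentiation computation plus a direct appeal to \autoref{prop:optimal_condition}.
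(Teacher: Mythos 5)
Your proposal is correct and takes essentially the same route as the paper's proof: your implicit differentiation of the cubic in $\sqrt{N_i}$ gives $g_u = 2(N_i - \lambda t_p)$, so the sign condition you extract from \autoref{prop:optimal_condition} is exactly the paper's computation $\dfrac{\partial t_p}{\partial \lambda} = \dfrac{(t_p+t_d)t_p}{N_i - \lambda t_p}$ with the denominator signed by $N_i \geq \lambda t_p$. Your additional boundary argument excluding the double-root case $N_i = \lambda t_p$ (where the implicit derivative blows up) is sound and in fact more careful than the paper, whose proof passes from $N_i \geq \lambda t_p$ directly to strict positivity without addressing the equality case.
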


\begin{proof}
    The partial derivative of $t_p$ with respect to $\lambda$ is
    \[\dfrac{\partial t_p}{\partial \lambda} = \dfrac{(t_p + t_d) \phi \sqrt{A} N_i^{-\frac{2}{3}}}{1 - \lambda \phi \sqrt{A} N_i^{-\frac{2}{3}}} = \dfrac{(t_p + t_d) t_p}{N_i - \lambda t_p}.\]
    We cannot pin down the sign of this partial derivative as the sign of its denominator remains undetermined. However, based on \autoref{prop:optimal_condition}, at the profit-maximizing optimum, we have
    \[N_i \geq \dfrac{N - 2 \lambda t_d}{3} = \dfrac{N_i + 2 \lambda t_p}{3}.\]
    Moving $N_i$ to the left-hand side, we obtain $N_i \geq \lambda t_p$, showing that $\dfrac{\partial t_p}{\partial \lambda} > 0$. The proof is completed.
\end{proof}

\begin{figure}[t]
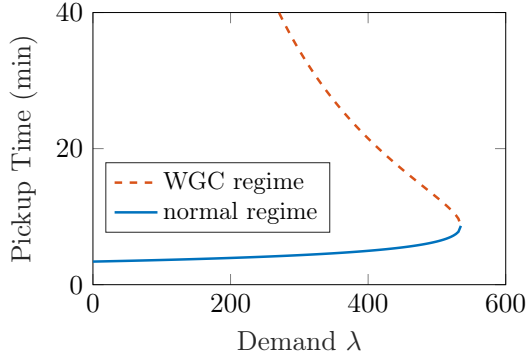

    \centering
    \includestandalone[width=0.4\linewidth]{figure/fig_WGC_analysis}
    \caption{The pickup time under varying demand when the supply is fixed at $N = 500$.}
    \label{fig:WGC_analysis}
\end{figure}

\autoref{lemma:WGC} states that the profit-maximizing solution always lies in a normal regime as opposed to the {\em wild goose chase} (WGC) regime, which refers to an inefficient outcome in the ride-hailing market where the pickup efficiency is extremely low \cite{castillo_surge_2017}. Intuitively, when demand increases, the customer waiting time should also increase. This is because a higher demand will occupy more couriers and reduce the number of idle couriers. Conversely, the WGC regime demonstrates the opposite properties. In the WGC regime, the idle couriers are spatially imbalanced so that the platform has to send distant couriers on a wild goose chase to serve new-coming customers. These long-distance pickup trips take up most of the courier labor time, reduce the fleet occupancy rate, and result in accumulated waiting customers, which in turn exacerbates the insufficiency of idle couriers. As such, increasing the demand in this regime will shorten the pickup time. Nonetheless, the pickup time in the WGC regime is much longer than that in the normal regime. \autoref{fig:WGC_analysis} illustrates how the pickup time changes with respect to the demand in two distinct regimes. Our observation in \autoref{lemma:WGC} is in line with the result in \cite[Lemma 1]{ke_pricing_2020}. Combining \autoref{prop:optimal_condition} and \autoref{lemma:WGC}, we can conclude that the condition of WGC regime is $N_i < (N - 2 \lambda t_d) / 3$. In other words, when \eqref{eqn:fixed_point_equation} has two positive roots, the larger one corresponds to the market outcome in the normal regime, whereas the smaller one pushes the market into the inefficient WGC regime. In \eqref{eqn:valet_profit_maximization_unconstrained}, the platform automatically excludes the market outcome in the WGC regime as it does not maximize its profit. Note that once the objective function \eqref{eqn:valet_profit_maximization_unconstrained} can be uniquely determined as a function of $\lambda$ and $N$, we can efficiently solve it through grid search, which guarantees global optimality.

\begin{remark}
    We emphasize that the change of variable in \eqref{eqn:valet_profit_maximization_unconstrained} does not affect the optimality of \eqref{eqn:valet_profit_maximization}. This is because given $\lambda$ and $N$, the time-based payment $\pi$ and the customer fare $p_v$ are both uniquely determined (after excluding the solution in the WGC regime). Hence, optimizing over $\pi$ and $p_v$ is equivalent to optimizing over $\lambda$ and $N$.
\end{remark}

\section{Charging Infrastructure Planning} \label{section:infrastructure_planning}

This section uncovers the strategic effect of charging infrastructure planning on the valet charging market.

\subsection{Problem Statement}

Appropriately deploying public charging infrastructure can significantly alleviate the inconvenience of EV charging. The siting and sizing problems of charging stations have been extensively studied. Particularly, some studies highlight the elasticity between driving distance and queuing time in planning charging infrastructure \cite{yang_data-driven_2017}\cite{huang_electric_2020}\cite{gan_fast-charging_2020}. The elastic demand refers to the phenomenon that customers will neither travel too far nor stay in line too long to obtain the service \cite{marianov_allocating_2005}. In this sense, given a limited charger supply, the placement of charging facilities is actually a trade-off between accessibility and charging congestion: (1) large and sparsely distributed charging stations may cause inconvenience for EV recharge due to the long driving distance, (2) small but densely distributed charging stations may result in congested charging queues and therefore a long waiting time.

Note that our model also demonstrates such characteristics. To see this, we assume that the total number of public chargers is fixed as $M_0$. Since there are $K$ homogeneous charging stations, each charging station would have $S = M_0/K$ charging outlets on average. By substituting $S = M_0/K$ and $\bar{\lambda} = \lambda/K$ into \eqref{eqn:waiting_time}, we can rewrite the waiting time $t_w$ as:
\begin{equation} \label{eqn:waiting_time_rewritten}
    t_w = \dfrac{K}{\lambda} \dfrac{M_0}{M_0 - \lambda t_c} \left(\dfrac{\lambda t_c}{M_0}\right)^{\sqrt{2 M_0/K + 2}}.
\end{equation}
From \eqref{eqn:waiting_time_rewritten}, we have $\dfrac{\partial t_w}{\partial K} > 0$. On the contrary, \eqref{eqn:delivery_time} gives $\dfrac{\mathrm{d } t_d}{\mathrm{d } K} < 0$. This reveals that when the number of chargers is fixed, building more charging stations will intensify the congestion, but at the same time reduce the travel distance to charging stations.

To capture the aforementioned trade-off, we consider the following charging infrastructure planning problem. Given a limited budget $B$ for public charging facilities, the city planner needs to decide the charger supply $M$ and charging station density $K$ to maximize the EV ownership, which is equivalent to maximizing the valet charging demand under the proposed business model. Let $\gamma_1$ and $\gamma_2$ denote the installation cost for each charger and the deployment cost for each charging station, respectively. The charger deployment is subject to
\begin{equation} \label{eqn:fixed_budget}
    \gamma_1 M + \gamma_2 K \leq B.
\end{equation}
Typically, the deployment cost $\gamma_2$ for each charging station includes the costs for parking spots, power-grid upgrade, construction, etc. These costs are positively related to the scale of each charging station, i.e., the number of outlets. In this light, we assume that the deployment cost $\gamma_2 K$ can be translated into a per-charger basis for simplification. Thereby \eqref{eqn:fixed_budget} can be rewritten as $\gamma M \leq B$, where $\gamma$ represents the combined installation cost for each charger. Obviously, EV penetration will always be maximized when the equality holds, i.e., the budget $B$ is used up. Let $M_0 = B/\gamma$ denote the nominal charger supply. The infrastructure planning problem is reduced to allocating theses $M_0$ public chargers into $K$ homogeneous charging stations.

To facilitate EV adoption, the city planner should subtly balance the delivery time and waiting time by determining an appropriate density of charging stations. We will demonstrate the decision-making problem with a numerical example, followed by a formal analysis that reveals in-depth economic insights.

\subsection{Numerical Example} \label{section:numerical_valet}

As a numerical example, we consider that the valet charging demand follows the nested logit model \cite{train_discrete_2009}. Specifically, potential private vehicle owners decide to purchase either an EV or a fossil fuel vehicle in the first place, after which EV owners choose to charge their cars using either valet charging or self-charging. In the lower nest, the EV owner's charging options are modeled as the following binary logit model:
\begin{equation} \label{eqn:P_vc}
    P_{vc} = \dfrac{\exp(-\epsilon_2 c_v)}{\exp(-\epsilon_2 c_s) + \exp(-\epsilon_2 c_v)},
\end{equation}
where $\epsilon_2 > 0$ is a sensitivity parameter, $c_v$ is the generalized cost of valet charging, and $c_s$ is the generalized cost of self-charging. The composite cost of EV charging is
\begin{equation} \label{eqn:c_ev}
    c_{ev} = -\dfrac{1}{\epsilon_2} \log \big(\exp(-\epsilon_2 c_v) + \exp(-\epsilon_2 c_s) \big).
\end{equation}
In the higher-level nest, the EV penetration is given by
\begin{equation} \label{eqn:P_ev}
    P_{ev} = \dfrac{\exp(-\epsilon_1 c_{ev})}{\exp(-\epsilon_1 c_{0}) + \exp(-\epsilon_1 c_{ev})},
\end{equation}
where $\epsilon_1 > 0$ is a sensitivity parameter, and $c_{0}$ is the generalized cost of refueling the conventional fossil fuel vehicles. As such, the demand model \eqref{eqn:F_v} can be rewritten as
\begin{equation} \label{eqn:demand_logit}
    \lambda = \tau \lambda_0 \dfrac{\exp(-\epsilon_1 c_{ev})}{\exp(-\epsilon_1 c_{0}) + \exp(-\epsilon_1 c_{ev})} \dfrac{\exp(-\epsilon_2 c_v)}{\exp(-\epsilon_2 c_s) + \exp(-\epsilon_2 c_v)},
\end{equation}
in which $c_{ev}$ is given by \eqref{eqn:c_ev}. Note that both $c_s$ and $c_{0}$ are considered as exogenous.

Similarly, the courier supply can also be described by a binary logit model:
\begin{equation} \label{eqn:supply_logit}
    N = N_0 \dfrac{\exp(\eta w)}{\exp(\eta w_0) + \exp(\eta w)},
\end{equation}
where $\eta > 0$ is a sensitivity parameter, and $w_0$ is the wage of outside option, e.g., selecting other jobs.

Moreover, to assess the economic impact of charging infrastructure planning, we define the social welfare as the sum of the platform's profit, customer surplus, and courier surplus:
\begin{equation} \label{eqn:social_welfare}
    \Pi_{SW} = \underbrace{\vphantom{\int_{c_v}^{\infty}}\lambda p_v - N w - K C}_\text{platform's profit} \, + \, \underbrace{\tau \lambda_0 \int_{c_v}^{\infty} F_v(x) \, \mathrm{d}x}_\text{customer surplus} \, + \, \underbrace{N_0 \int_{0}^{w} F_v(y) \, \mathrm{d}y}_\text{courier surplus}.
\end{equation}

Under the above assumptions, the model parameters in this study include
$$\Theta = \{\lambda_0, N_0, M_0, A, C, c_s, c_{0}, w_0, t_c, \alpha, \beta,  \tau, \theta, \phi, \epsilon_1, \epsilon_2, \eta\}.$$
These parameters are calibrated with partial reference to existing studies and official data in Hong Kong, which are specified as below:
\[\lambda_0 = 6\e5, \, N_0 = 5\e 4, \, M_0 = 3\e 3, \, A = 1\e 3\text{km}^2, \, C = \HKD 60, \, c_s = \HKD 80, \, c_{0} = \HKD 75, \, w_0 = \HKD 110/ \hr,\]
\[t_c = 5 \hr, \, \alpha = \HKD 60/ \hr, \, \beta = \HKD 10/ \hr, \, \tau = 1\%, \, \theta = 0.06, \, \phi = 0.04, \, \epsilon_1 = 0.11, \, \epsilon_2 = 0.1, \, \eta = 0.1.\]
The calibration procedure of these parameters is given in \ref{appendix:parameters_calibration}. We emphasize that they are only for preliminary illustration of the proposed business model. The economic insights we derive later are robust to perturbations in model parameters. One may fine-tune the parameters with real-world data in future research.

To evaluate how the market outcome is affected by charging station density under an exogenous charger supply $M_0$, we first fix $K$ and solve the profit-maximization problem \eqref{eqn:valet_profit_maximization_unconstrained}. By varying $K$, we can trace the market outcomes as a function of $K$. The simulation results are reported in \autoref{fig:valet_supply}-\ref{fig:valet_waiting_time}.

\begin{figure*}[htbp]
    \begin{minipage}[b]{0.32\linewidth}
        \centering
%
%
\definecolor{mycolor1}{rgb}{0.85000,0.32500,0.09800}%
\begin{tikzpicture}

\begin{axis}[%
width=1.694in,
height=1.03in,
at={(0.378in,0.457in)},
scale only axis,
xmin=20,
xmax=120,
xlabel style={font=\color{white!15!black}},
xlabel={Charging Station Density $K$},
ymin=100,
ymax=550,
scaled y ticks=base 10:-2,
ytick={100, 300, 500},
yticklabels={1.0, 3.0, 5.0},
ylabel style={font=\color{white!15!black}},
ylabel={Courier Supply},
axis background/.style={fill=white}
]
\addplot [color=black, line width=1.0pt, forget plot]
  table[row sep=crcr]{%
20	129.077846026837\\
21	152.221560432185\\
22	176.971634104008\\
23	203.23028460329\\
24	230.891464984092\\
25	259.842159325176\\
26	289.964011709658\\
27	321.133864763218\\
28	353.218681866322\\
29	386.037142665019\\
30	419.151397977774\\
31	451.04117802483\\
32	477.75522115611\\
33	495.556090416381\\
34	505.565706706516\\
35	510.747211030301\\
36	513.128895549504\\
37	513.833133179007\\
38	513.484653270981\\
39	512.449454242412\\
40	510.953593049175\\
41	509.143280114717\\
42	507.116669739422\\
43	504.941693668475\\
44	502.666524795998\\
45	500.325991703384\\
46	497.94573932499\\
47	495.544754384192\\
48	493.137304871395\\
49	490.734189402927\\
50	488.343568510832\\
51	485.971613236382\\
52	483.623008891412\\
53	481.301259403925\\
54	479.008967656174\\
55	476.748039601849\\
56	474.519779748202\\
57	472.325171350752\\
58	470.164726667268\\
59	468.038718993841\\
60	465.947224333846\\
61	463.890127581163\\
62	461.867207444403\\
63	459.878145064264\\
64	457.92244447403\\
65	455.999645404845\\
66	454.109190985539\\
67	452.250495692884\\
68	450.422991404198\\
69	448.626003624724\\
70	446.858920264318\\
71	445.121090463901\\
72	443.411873312032\\
73	441.730609434433\\
74	440.076692193876\\
75	438.449461531274\\
76	436.84829646171\\
77	435.272597035536\\
78	433.721751308358\\
79	432.195170867728\\
80	430.692278402352\\
81	429.212511035393\\
82	427.755318538641\\
83	426.320162004791\\
84	424.906520136007\\
85	423.513879969793\\
86	422.141748758389\\
87	420.789640521872\\
88	419.457092273634\\
89	418.143636441048\\
90	416.848830008381\\
91	415.57225731071\\
92	414.313482084473\\
93	413.072101560361\\
94	411.8476920081\\
95	410.639951689907\\
96	409.448432410108\\
97	408.272776653407\\
98	407.112670378993\\
99	405.967741426246\\
100	404.837629539982\\
101	403.722097986837\\
102	402.620769501305\\
103	401.533328323118\\
104	400.459523845143\\
105	399.399025308496\\
106	398.351574237328\\
107	397.316900650246\\
108	396.294731609882\\
109	395.284821104713\\
110	394.286886873226\\
111	393.300749840419\\
112	392.326067268958\\
113	391.362701059066\\
114	390.410398843952\\
115	389.468943135013\\
116	388.538118029437\\
117	387.617714649454\\
118	386.7075352331\\
119	385.807386026613\\
120	384.917073613405\\
};
\addplot [color=mycolor1, dashed, line width=1.0pt, forget plot]
  table[row sep=crcr]{%
20	513.833133179007\\
37	513.833133179007\\
};
\addplot [color=mycolor1, dashed, line width=1.0pt, forget plot]
  table[row sep=crcr]{%
37	0\\
37	513.833133179007\\
};
\addplot [color=mycolor1, only marks, mark size=2.0pt, mark=*, mark options={solid, mycolor1}, forget plot]
  table[row sep=crcr]{%
37	513.833133179007\\
};
\node[right, align=left]
at (axis cs:35,390.833) {$K^*_{N} = 37$};
\end{axis}

\end{tikzpicture}%
        \vspace*{-0.3in}
        \caption{Courier supply $N$ as a function of $K$.}
        \label{fig:valet_supply}
    \end{minipage}  
    \begin{minipage}[b]{0.008\linewidth}
        \hfill
    \end{minipage}
    \begin{minipage}[b]{0.32\linewidth}
        \centering
%
%
\definecolor{mycolor1}{rgb}{0.85000,0.32500,0.09800}%
\begin{tikzpicture}

\begin{axis}[%
width=1.694in,
height=1.03in,
at={(0.378in,0.457in)},
scale only axis,
xmin=20,
xmax=120,
xlabel style={font=\color{white!15!black}},
xlabel={Charging Station Density $K$},
ymin=50,
ymax=520,
scaled y ticks=base 10:-2,
ytick={ 100, 300, 500},
yticklabels={ 1.0, 3.0, 5.0},
ylabel style={font=\color{white!15!black}},
ylabel={Valet Demand},
axis background/.style={fill=white}
]
\addplot [color=black, line width=1.0pt, forget plot]
  table[row sep=crcr]{%
20	76.2746744647424\\
21	94.358117978335\\
22	114.595925476098\\
23	136.989063059804\\
24	161.519575348134\\
25	188.151473425522\\
26	216.832071876532\\
27	247.492561238841\\
28	280.043411681914\\
29	314.337348520488\\
30	349.961990775797\\
31	385.392026628372\\
32	416.516390604229\\
33	439.169153399371\\
34	454.073114583995\\
35	464.024410437136\\
36	471.006439174236\\
37	476.134576900209\\
38	480.037612783629\\
39	483.088912484619\\
40	485.52277201883\\
41	487.493661044956\\
42	489.107704281976\\
43	490.44039545968\\
44	491.547002689592\\
45	492.469017575616\\
46	493.238309125072\\
47	493.879678881996\\
48	494.412837426899\\
49	494.853653317144\\
50	495.215037656677\\
51	495.507607527478\\
52	495.740203607528\\
53	495.920228734084\\
54	496.053943079208\\
55	496.14667996307\\
56	496.202972459226\\
57	496.226834350121\\
58	496.221628461462\\
59	496.190322340336\\
60	496.135528976181\\
61	496.059504220459\\
62	495.964311788985\\
63	495.851770161639\\
64	495.72338193612\\
65	495.580617392721\\
66	495.424725311594\\
67	495.256829394991\\
68	495.078020227296\\
69	494.889146041694\\
70	494.691081261514\\
71	494.484563994516\\
72	494.270296830488\\
73	494.04887627909\\
74	493.820916794545\\
75	493.586902477509\\
76	493.34731541676\\
77	493.102599841583\\
78	492.853148359804\\
79	492.59933184005\\
80	492.341488784444\\
81	492.079929975424\\
82	491.81494409497\\
83	491.546796849191\\
84	491.275735018251\\
85	491.001984361012\\
86	490.725758007574\\
87	490.447250778379\\
88	490.16664733706\\
89	489.884112022684\\
90	489.599801707749\\
91	489.313872141953\\
92	489.026450853835\\
93	488.737667808904\\
94	488.447601202739\\
95	488.15647864139\\
96	487.864300807931\\
97	487.571163493633\\
98	487.277228049545\\
99	486.982527362454\\
100	486.687100167236\\
101	486.391152538069\\
102	486.094660224689\\
103	485.797676451421\\
104	485.500315970106\\
105	485.20259008703\\
106	484.904584131057\\
107	484.606345367483\\
108	484.307919570147\\
109	484.009366949224\\
110	483.710666256708\\
111	483.411984069799\\
112	483.113164500353\\
113	482.814411614577\\
114	482.515705556284\\
115	482.217077298246\\
116	481.918560037557\\
117	481.620165374568\\
118	481.321936824755\\
119	481.023896811971\\
120	480.726063631451\\
};
\addplot [color=mycolor1, dashed, line width=1.0pt, forget plot]
  table[row sep=crcr]{%
20	496.226834350121\\
57	496.226834350121\\
};
\addplot [color=mycolor1, dashed, line width=1.0pt, forget plot]
  table[row sep=crcr]{%
57	0\\
57	496.226834350121\\
};
\addplot [color=mycolor1, only marks, mark size=2.0pt, mark=*, mark options={solid, mycolor1}, forget plot]
  table[row sep=crcr]{%
57	496.226834350121\\
};
\node[right, align=left]
at (axis cs:59,430.227) {$K^*_{\lambda} = 57$};
\end{axis}

\end{tikzpicture}%
        \vspace*{-0.3in}
        \caption{Valet charging demand $\lambda$ as a function of $K$.}
        \label{fig:valet_demand}
    \end{minipage}  
    \begin{minipage}[b]{0.008\linewidth}
        \hfill
    \end{minipage}
    \begin{minipage}[b]{0.32\linewidth}
        \centering
%
%
\definecolor{mycolor1}{rgb}{0.85000,0.32500,0.09800}%
\begin{tikzpicture}

\begin{axis}[%
width=1.694in,
height=1.03in,
at={(0.378in,0.457in)},
scale only axis,
xmin=20,
xmax=120,
xlabel style={font=\color{white!15!black}},
xlabel={Charging Station Density $K$},
ymin=-903.558772567462,
ymax=9000,
scaled y ticks=base 10:-3,
ytick={0, 3000, 6000, 9000},
yticklabels={0.0, 3.0, 6.0, 9.0},
ylabel style={font=\color{white!15!black}},
ylabel={Profit (HK\$/hr)},
axis background/.style={fill=white}
]
\addplot [color=black, line width=1.0pt, forget plot]
  table[row sep=crcr]{%
20	-803.558772567462\\
21	-739.436561648233\\
22	-654.741382451431\\
23	-548.731861645983\\
24	-420.839821907206\\
25	-270.663096284881\\
26	-97.957363250538\\
27	97.3724149297923\\
28	315.277891177771\\
29	555.55253056706\\
30	817.703084455297\\
31	1100.33004921157\\
32	1399.29390834947\\
33	1706.30349928446\\
34	2012.38275554247\\
35	2311.67711980602\\
36	2601.22600369804\\
37	2879.70723992879\\
38	3146.64643407748\\
39	3402.01334212082\\
40	3646.01538558859\\
41	3878.98736948526\\
42	4101.32995674364\\
43	4313.4741399252\\
44	4515.86024578919\\
45	4708.92539522956\\
46	4893.09604395107\\
47	5068.78365336985\\
48	5236.3823253557\\
49	5396.2676832112\\
50	5548.7965469102\\
51	5694.30711246291\\
52	5833.1194463916\\
53	5965.53617083373\\
54	6091.84325673071\\
55	6212.31087024081\\
56	6327.19423601158\\
57	6436.73449343018\\
58	6541.15953046112\\
59	6640.68478547898\\
60	6735.51401147332\\
61	6825.83999971991\\
62	6911.84526184719\\
63	6993.70267048364\\
64	7071.57605946813\\
65	7145.62078514032\\
66	7215.984250545\\
67	7282.80639454549\\
68	7346.22014792126\\
69	7406.35185853124\\
70	7463.3216875678\\
71	7517.24397887641\\
72	7568.22760320474\\
73	7616.37627916004\\
74	7661.78887253545\\
75	7704.55967557198\\
76	7744.7786676027\\
77	7782.53175843744\\
78	7817.90101574124\\
79	7850.96487756831\\
80	7881.79835112922\\
81	7910.47319878515\\
82	7937.05811218867\\
83	7961.6188754202\\
84	7984.21851790399\\
85	8004.91745782742\\
86	8023.773636732\\
87	8040.84264589346\\
88	8056.1778450603\\
89	8069.83047407866\\
90	8081.84975788845\\
91	8092.28300534238\\
92	8101.1757022636\\
93	8108.57159912604\\
94	8114.51279371572\\
95	8119.0398091001\\
96	8122.19166721784\\
97	8124.00595836378\\
98	8124.5189068381\\
99	8123.76543300536\\
100	8121.77921198411\\
101	8118.59272918214\\
102	8114.23733287317\\
103	8108.74328399686\\
104	8102.13980334807\\
105	8094.45511632089\\
106	8085.7164953461\\
107	8075.95030016661\\
108	8065.18201607354\\
109	8053.43629022433\\
110	8040.73696615905\\
111	8027.10711659935\\
112	8012.56907467593\\
113	7997.1444635885\\
114	7980.85422489644\\
115	7963.71864541959\\
116	7945.75738288076\\
117	7926.98949034243\\
118	7907.43343950317\\
119	7887.10714291998\\
120	7866.02797521014\\
};
\addplot [color=mycolor1, dashed, line width=1.0pt, forget plot]
  table[row sep=crcr]{%
20	8124.5189068381\\
98	8124.5189068381\\
};
\addplot [color=mycolor1, dashed, line width=1.0pt, forget plot]
  table[row sep=crcr]{%
98	-803.558772567462\\
98	8124.5189068381\\
};
\addplot [color=mycolor1, only marks, mark size=2.0pt, mark=*, mark options={solid, mycolor1}, forget plot]
  table[row sep=crcr]{%
98	8124.5189068381\\
};
\node[right, align=left]
at (axis cs:58,5624.519) {$K^*_{\Pi} = 98$};
\end{axis}

\end{tikzpicture}%
        \vspace*{-0.3in}
        \caption{The platform's profit $\Pi$ as a function of $K$.}
        \label{fig:valet_profit}
    \end{minipage}    

    \begin{minipage}[b]{0.32\linewidth}
        \centering
%
%
\definecolor{mycolor1}{rgb}{0.00000,0.44700,0.74100}%
\begin{tikzpicture}

\begin{axis}[%
width=1.694in,
height=1.03in,
at={(0.378in,0.457in)},
scale only axis,
xmin=20,
xmax=120,
xlabel style={font=\color{white!15!black}},
xlabel={Charging Station Density $K$},
ymin=500,
ymax=5500,
scaled y ticks=base 10:-3,
ytick={1000, 3000, 5000},
yticklabels={1.0, 3.0, 5.0},
ylabel style={font=\color{white!15!black}},
ylabel={Surplus},
axis background/.style={fill=white},
legend style={nodes={scale=0.9, transform shape}, at={(1, 0.1)}, anchor=south east, legend cell align=left, align=left, draw=white!15!black}
]
\addplot [color=black, line width=1.0pt]
  table[row sep=crcr]{%
20	766.804261059831\\
21	949.808784749687\\
22	1155.17461735269\\
23	1383.10679987406\\
24	1633.64124718962\\
25	1906.65123533858\\
26	2201.8578299162\\
27	2518.83185397568\\
28	2856.94133511814\\
29	3214.95716678525\\
30	3588.85215723897\\
31	3962.74505293288\\
32	4292.90496591143\\
33	4534.21607537596\\
34	4693.45379890245\\
35	4799.98584699871\\
36	4874.83165741137\\
37	4929.85727954214\\
38	4971.76760990919\\
39	5004.55032392083\\
40	5030.71083340803\\
41	5051.90254644041\\
42	5069.26231052942\\
43	5083.59939276611\\
44	5095.50659474807\\
45	5105.42917984132\\
46	5113.70930317095\\
47	5120.61334629774\\
48	5126.35308791948\\
49	5131.09908179466\\
50	5134.99013561306\\
51	5138.14042404318\\
52	5140.6450416445\\
53	5142.58363426781\\
54	5144.02356757562\\
55	5145.0222439478\\
56	5145.6284607684\\
57	5145.88543236791\\
58	5145.82936943701\\
59	5145.49223050214\\
60	5144.90215904032\\
61	5144.08345465881\\
62	5143.058349243\\
63	5141.84643454782\\
64	5140.46390108318\\
65	5138.92659141786\\
66	5137.24796275738\\
67	5135.44012514081\\
68	5133.51483151177\\
69	5131.48122477079\\
70	5129.34872924072\\
71	5127.12530092119\\
72	5124.8185127712\\
73	5122.43479504589\\
74	5119.98077053911\\
75	5117.46165810999\\
76	5114.88265437735\\
77	5112.24854782113\\
78	5109.56357069527\\
79	5106.83172072226\\
80	5104.05664673535\\
81	5101.24169873385\\
82	5098.38998839558\\
83	5095.50437968865\\
84	5092.58753248855\\
85	5089.64188006284\\
86	5086.66971941011\\
87	5083.67315015876\\
88	5080.65416231654\\
89	5077.61452690155\\
90	5074.55593430802\\
91	5071.48006256091\\
92	5068.38828582505\\
93	5065.2820042366\\
94	5062.16206086035\\
95	5059.030905553\\
96	5055.88854723205\\
97	5052.73601763101\\
98	5049.57505330379\\
99	5046.40600908153\\
100	5043.22930261394\\
101	5040.04715073337\\
102	5036.8592934858\\
103	5033.66630390792\\
104	5030.4694163881\\
105	5027.2687530288\\
106	5024.06523162307\\
107	5020.85936064058\\
108	5017.65163242309\\
109	5014.44269438109\\
110	5011.23231832586\\
111	5008.02229467575\\
112	5004.81094812722\\
113	5001.60047176794\\
114	4998.3906521103\\
115	4995.18182186348\\
116	4991.97433757226\\
117	4988.76832367902\\
118	4985.56424748057\\
119	4982.362349599\\
120	4979.16282625934\\
};
\addlegendentry{customer surplus}

\addplot [color=mycolor1, dashed, line width=1.0pt]
  table[row sep=crcr]{%
20	1284.09666164287\\
21	1516.18667768945\\
22	1764.50486607175\\
23	2028.09354636596\\
24	2305.91142512648\\
25	2596.84609093546\\
26	2899.7298978529\\
27	3213.34493343018\\
28	3536.37145318322\\
29	3867.00026588421\\
30	4200.83079649052\\
31	4522.52799308386\\
32	4792.17288289828\\
33	4971.93118530918\\
34	5073.0395665954\\
35	5125.38661974377\\
36	5149.44984252897\\
37	5156.56529178617\\
38	5153.04432114827\\
39	5142.58502537951\\
40	5127.47174366035\\
41	5109.18204163527\\
42	5088.70786704304\\
43	5066.73573553553\\
44	5043.75246548889\\
45	5020.11000242313\\
46	4996.0674688182\\
47	4971.81669022555\\
48	4947.50179811683\\
49	4923.23185840896\\
50	4899.08927481724\\
51	4875.13634503688\\
52	4851.42035259548\\
53	4827.97664367964\\
54	4804.83146063822\\
55	4782.00400508501\\
56	4759.50740361455\\
57	4737.35153794095\\
58	4715.54153401896\\
59	4694.08010541882\\
60	4672.96797448045\\
61	4652.20393515728\\
62	4631.78571057512\\
63	4611.71004021251\\
64	4591.97187733235\\
65	4572.56653694566\\
66	4553.48836058141\\
67	4534.73140214294\\
68	4516.28989263707\\
69	4498.15699117198\\
70	4480.32648809918\\
71	4462.79178403806\\
72	4445.54638143077\\
73	4428.58359784943\\
74	4411.89728431117\\
75	4395.48075417289\\
76	4379.32771629749\\
77	4363.43209298234\\
78	4347.78768670289\\
79	4332.38853846492\\
80	4317.22880234973\\
81	4302.30277909017\\
82	4287.6048980091\\
83	4273.12970358325\\
84	4258.87191882632\\
85	4244.8263517065\\
86	4230.98801491761\\
87	4217.35199021917\\
88	4203.91359203429\\
89	4190.66808928008\\
90	4177.61099923445\\
91	4164.73811565962\\
92	4152.04502524937\\
93	4139.52765307478\\
94	4127.1817134519\\
95	4115.00415132557\\
96	4102.99043543852\\
97	4091.13694940162\\
98	4079.44051563822\\
99	4067.89737106279\\
100	4056.5038731063\\
101	4045.25762494086\\
102	4034.15481335577\\
103	4023.19224563592\\
104	4012.36738625727\\
105	4001.67689205861\\
106	3991.11814890976\\
107	3980.68842702955\\
108	3970.38496716957\\
109	3960.20528180969\\
110	3950.146518344\\
111	3940.20686453104\\
112	3930.38285879251\\
113	3920.6731024232\\
114	3911.07504524261\\
115	3901.58648886187\\
116	3892.20525111373\\
117	3882.92922086381\\
118	3873.75639922595\\
119	3864.68482791889\\
120	3855.71255226352\\
};
\addlegendentry{courier surplus}

\end{axis}

\end{tikzpicture}%
        \vspace*{-0.3in}
        \caption{Customer and courier surplus as a function of $K$.}
        \label{fig:valet_surplus}
    \end{minipage}
    \begin{minipage}[b]{0.008\linewidth}
        \hfill
    \end{minipage}
    \begin{minipage}[b]{0.32\linewidth}
        \centering
%
%
\definecolor{mycolor1}{rgb}{0.85000,0.32500,0.09800}%
\begin{tikzpicture}

\begin{axis}[%
width=1.694in,
height=1.03in,
at={(0.497in,0.457in)},
scale only axis,
xmin=20,
xmax=120,
xlabel style={font=\color{white!15!black}},
xlabel={Charging Station Density $K$},
ymin=0,
ymax=18000,
scaled y ticks=base 10:-4,
ytick={0, 5000, 10000, 15000},
yticklabels={0.0, 0.5, 1.0, 1.5},
ylabel style={font=\color{white!15!black}},
ylabel={Social Welfare},
axis background/.style={fill=white}
]
\addplot [color=black, line width=1.0pt, forget plot]
  table[row sep=crcr]{%
20	1247.34215013523\\
21	1726.55890079091\\
22	2264.93810097301\\
23	2862.46848459404\\
24	3518.71285040889\\
25	4232.83422998915\\
26	5003.63036451856\\
27	5829.54920233565\\
28	6708.59067947914\\
29	7637.50996323652\\
30	8607.38603818478\\
31	9585.60309522831\\
32	10484.3717571592\\
33	11212.4507599696\\
34	11778.8761210403\\
35	12237.0495865485\\
36	12625.5075036384\\
37	12966.1298112571\\
38	13271.4583651349\\
39	13549.1486914212\\
40	13804.197962657\\
41	14040.071957561\\
42	14259.3001343161\\
43	14463.8092682268\\
44	14655.1193060262\\
45	14834.464577494\\
46	15002.8728159402\\
47	15161.2136898931\\
48	15310.237211392\\
49	15450.5986234148\\
50	15582.8759573405\\
51	15707.583881543\\
52	15825.1848406316\\
53	15936.0964487812\\
54	16040.6982849445\\
55	16139.3371192736\\
56	16232.3301003945\\
57	16319.971463739\\
58	16402.5304339171\\
59	16480.2571213999\\
60	16553.3841449941\\
61	16622.127389536\\
62	16686.6893216653\\
63	16747.259145244\\
64	16804.0118378837\\
65	16857.1139135038\\
66	16906.7205738838\\
67	16952.9779218292\\
68	16996.0248720701\\
69	17035.990074474\\
70	17072.9969049077\\
71	17107.1610638357\\
72	17138.5924974067\\
73	17167.3946720554\\
74	17193.6669273857\\
75	17217.5020878549\\
76	17238.9890382775\\
77	17258.2123992409\\
78	17275.2522731394\\
79	17290.1851367555\\
80	17303.0838002143\\
81	17314.0176766092\\
82	17323.0529985933\\
83	17330.2529586921\\
84	17335.6779692189\\
85	17339.3856895968\\
86	17341.4313710597\\
87	17341.8677862714\\
88	17340.7455994111\\
89	17338.1130902603\\
90	17334.0166914309\\
91	17328.5011835629\\
92	17321.609013338\\
93	17313.3812564374\\
94	17303.856568028\\
95	17293.0748659787\\
96	17281.0706498884\\
97	17267.8789253964\\
98	17253.5344757801\\
99	17238.0688131497\\
100	17221.5123877043\\
101	17203.8975048564\\
102	17185.2514397147\\
103	17165.6018335407\\
104	17144.9766059934\\
105	17123.4007614083\\
106	17100.8998758789\\
107	17077.4980878367\\
108	17053.2186156662\\
109	17028.0842664151\\
110	17002.1158028289\\
111	16975.3362758061\\
112	16947.7628815957\\
113	16919.4180377796\\
114	16890.3199222494\\
115	16860.4869561449\\
116	16829.9369715668\\
117	16798.6870348853\\
118	16766.7540862097\\
119	16734.1543204379\\
120	16700.903353733\\
};
\addplot [color=mycolor1, dashed, line width=1.0pt, forget plot]
  table[row sep=crcr]{%
20	17341.8677862714\\
87	17341.8677862714\\
};
\addplot [color=mycolor1, dashed, line width=1.0pt, forget plot]
  table[row sep=crcr]{%
87	0\\
87	17341.8677862714\\
};
\addplot [color=mycolor1, only marks, mark size=2.0pt, mark=*, mark options={solid, mycolor1}, forget plot]
  table[row sep=crcr]{%
87	17341.8677862714\\
};
\node[right, align=left]
at (axis cs:40,12391.868) {$K^*_{SW} = 87$};
\end{axis}
\end{tikzpicture}%
        \vspace*{-0.3in}
        \caption{The social welfare $\Pi_{SW}$ as a function of $K$.}
        \label{fig:valet_social_welfare}
    \end{minipage}
    \begin{minipage}[b]{0.008\linewidth}
        \hfill
    \end{minipage}
    \begin{minipage}[b]{0.32\linewidth}
        \centering
%
%
\definecolor{mycolor1}{rgb}{0.00000,0.44700,0.74100}%
\begin{tikzpicture}

\begin{axis}[%
width=1.694in,
height=1.03in,
at={(0.573in,0.457in)},
scale only axis,
xmin=20,
xmax=120,
xlabel style={font=\color{white!15!black}},
xlabel={Charging Station Density $K$},
ymin=0.36,
ymax=0.43,
ytick={0.36, 0.39, 0.42},
yticklabels={36\%, 39\%, 42\%},
ylabel style={font=\color{white!15!black}},
ylabel={EV Penetration},
axis background/.style={fill=white},
legend style={nodes={scale=0.9, transform shape}, at={(1, 0.2)}, anchor=south east, legend cell align=left, align=left, draw=white!15!black}
]
\addplot [color=black, line width=1.0pt]
  table[row sep=crcr]{%
20	0.374716767303427\\
21	0.376811129063251\\
22	0.379153048777144\\
23	0.381742004047127\\
24	0.384575232667685\\
25	0.387647850400585\\
26	0.390953022205594\\
27	0.394482046334253\\
28	0.398223845544258\\
29	0.402160743410203\\
30	0.406244763525152\\
31	0.410300861821251\\
32	0.413859489520229\\
33	0.416446869277872\\
34	0.418147985640686\\
35	0.419283284535007\\
36	0.420079581989523\\
37	0.420664312500401\\
38	0.421109277617059\\
39	0.421457095915926\\
40	0.421734503967598\\
41	0.421959124998703\\
42	0.4221430643691\\
43	0.422294932086742\\
44	0.422421030589715\\
45	0.422526090768784\\
46	0.422613745960602\\
47	0.422686823494952\\
48	0.422747570153999\\
49	0.422797794638056\\
50	0.422838968458279\\
51	0.422872301597221\\
52	0.422898801532318\\
53	0.422919311843486\\
54	0.422934545866982\\
55	0.422945111300221\\
56	0.422951524638215\\
57	0.422954243191422\\
58	0.422953650091717\\
59	0.422950083426178\\
60	0.422943840879802\\
61	0.422935179444026\\
62	0.422924334217114\\
63	0.422911512355882\\
64	0.422896885027826\\
65	0.422880619718347\\
66	0.422862858677626\\
67	0.422843729906339\\
68	0.422823357633155\\
69	0.422801838476315\\
70	0.422779272039369\\
71	0.422755742396396\\
72	0.422731329576271\\
73	0.422706101523054\\
74	0.422680128226918\\
75	0.422653464827365\\
76	0.422626166236072\\
77	0.42259828305246\\
78	0.422569859995531\\
79	0.42254093930858\\
80	0.422511559548225\\
81	0.422481756113785\\
82	0.422451561887473\\
83	0.422421007134565\\
84	0.422390119964802\\
85	0.422358926093541\\
86	0.42232744979801\\
87	0.422295713269905\\
88	0.422263737544351\\
89	0.422231541341195\\
90	0.422199142530147\\
91	0.422166558853672\\
92	0.422133804838897\\
93	0.422100895291249\\
94	0.422067839116998\\
95	0.422034662249954\\
96	0.422001364764045\\
97	0.421967957572187\\
98	0.421934459059348\\
99	0.421900872970548\\
100	0.421867203717459\\
101	0.421833474783582\\
102	0.421799683403098\\
103	0.421765835640027\\
104	0.421731944571166\\
105	0.421698011483689\\
106	0.421664046101633\\
107	0.421630053811073\\
108	0.421596039828327\\
109	0.421562011015393\\
110	0.421527964949877\\
111	0.421493920618045\\
112	0.421459860251218\\
113	0.421425807109303\\
114	0.421391758929176\\
115	0.421357719241474\\
116	0.42132369183054\\
117	0.421289678019068\\
118	0.421255682767859\\
119	0.421221708633642\\
120	0.421187757702625\\
};
\addlegendentry{with valet}

\addplot [color=mycolor1, dashed, line width=1.0pt]
  table[row sep=crcr]{%
20	0.365864408989199\\
120	0.365864408989199\\
};
\addlegendentry{without valet}

\end{axis}

\end{tikzpicture}%
        \vspace*{-0.3in}
        \caption{The EV penetration $P_{ev}$ as a function of $K$.}
        \label{fig:valet_EV_penetration}
    \end{minipage}    

    \begin{minipage}[b]{0.32\linewidth}
        \centering
%
%
\begin{tikzpicture}

\begin{axis}[%
width=1.694in,
height=1.03in,
at={(0.454in,0.457in)},
scale only axis,
xmin=20,
xmax=120,
xlabel style={font=\color{white!15!black}},
xlabel={Charging Station Density $K$},
ymin=79,
ymax=91,
scaled y ticks=base 10:0,
ytick={80, 85, 90},
yticklabels={80, 85, 90},
ylabel style={font=\color{white!15!black}},
ylabel={Price (HK\$)},
axis background/.style={fill=white}
]
\addplot [color=black, line width=1.0pt, forget plot]
  table[row sep=crcr]{%
20	90.5428175038607\\
21	89.5438338310882\\
22	88.5765172091453\\
23	87.6431891961541\\
24	86.7446342791146\\
25	85.8807252592219\\
26	85.050793876136\\
27	84.2538783422108\\
28	83.4890005088631\\
29	82.7560321756353\\
30	82.0595550111371\\
31	81.422863914766\\
32	80.9076739048323\\
33	80.5643816212644\\
34	80.3610549808393\\
35	80.2417954674914\\
36	80.1706145257163\\
37	80.1281172729628\\
38	80.1036623236248\\
39	80.0910690237559\\
40	80.0865391815632\\
41	80.0876229952913\\
42	80.0926758085392\\
43	80.1005571090739\\
44	80.110458434516\\
45	80.121790496068\\
46	80.1341180005926\\
47	80.1471159607171\\
48	80.1605363392804\\
49	80.17419168792\\
50	80.1879366931192\\
51	80.2016590625973\\
52	80.2152727960514\\
53	80.2287108881447\\
54	80.2419215418544\\
55	80.2548658358356\\
56	80.2675136061423\\
57	80.2798416631886\\
58	80.291834826306\\
59	80.3034808438187\\
60	80.3147716876749\\
61	80.3257064970776\\
62	80.336278934023\\
63	80.3464884308876\\
64	80.3563411864064\\
65	80.365836879436\\
66	80.3749808665048\\
67	80.38377962997\\
68	80.3922335956745\\
69	80.4003537045586\\
70	80.4081435584826\\
71	80.4156116257176\\
72	80.4227627618515\\
73	80.4296061468141\\
74	80.436146217044\\
75	80.442391707481\\
76	80.4483489339948\\
77	80.4540254532708\\
78	80.4594278948561\\
79	80.4645627556109\\
80	80.4694366971549\\
81	80.4740566519876\\
82	80.4784289896874\\
83	80.4825597893989\\
84	80.4864554098375\\
85	80.4901217498042\\
86	80.4935648938194\\
87	80.4967903095991\\
88	80.4998041413197\\
89	80.5026110292059\\
90	80.5052165089977\\
91	80.5076265757776\\
92	80.5098452437695\\
93	80.5118776663792\\
94	80.5137311852842\\
95	80.5154034472052\\
96	80.5169049126847\\
97	80.5182409627728\\
98	80.519409890154\\
99	80.520420044701\\
100	80.5212781393536\\
101	80.5219791353411\\
102	80.5225330692762\\
103	80.5229441323183\\
104	80.5232139152544\\
105	80.5233473310287\\
106	80.5233458456949\\
107	80.5232137384339\\
108	80.5229533778579\\
109	80.522567071221\\
110	80.5220637221351\\
111	80.5214347557887\\
112	80.5206995599834\\
113	80.5198465244558\\
114	80.5188830435578\\
115	80.5178119283705\\
116	80.5166346119741\\
117	80.5153559852446\\
118	80.5139761098455\\
119	80.5124980619344\\
120	80.5109245968167\\
};
\end{axis}

\end{tikzpicture}%
        \vspace*{-0.3in}
        \caption{Valet charging price $p_v$ as a function of $K$.}
        \label{fig:valet_price}
    \end{minipage}
    \begin{minipage}[b]{0.008\linewidth}
        \hfill
    \end{minipage}
    \begin{minipage}[b]{0.32\linewidth}
        \centering
%
%
\begin{tikzpicture}

\begin{axis}[%
width=1.694in,
height=1.03in,
at={(0.454in,0.457in)},
scale only axis,
xmin=20,
xmax=120,
xlabel style={font=\color{white!15!black}},
xlabel={Charging Station Density $K$},
ymin=45,
ymax=90,
scaled y ticks=base 10:0,
ytick={50, 70, 90},
yticklabels={50, 70, 90},
ylabel style={font=\color{white!15!black}},
ylabel={Marginal Cost},
axis background/.style={fill=white}
]
\addplot [color=black, line width=1.0pt, forget plot]
  table[row sep=crcr]{%
20	85.032102357798\\
21	83.7134434086659\\
22	82.4642430223287\\
23	81.2786454299783\\
24	80.1513265354815\\
25	79.0774781907427\\
26	78.0527835765457\\
27	77.0733821423287\\
28	76.1357813780985\\
29	75.2367349580775\\
30	74.3727366962388\\
31	73.5383177111955\\
32	72.7238951709522\\
33	71.9225267042345\\
34	71.1380481177703\\
35	70.3766073606\\
36	69.6412098119819\\
37	68.9325605836648\\
38	68.2501955790978\\
39	67.5931812413817\\
40	66.9604066895323\\
41	66.3506505581371\\
42	65.7627259279135\\
43	65.1955076569586\\
44	64.6478770697688\\
45	64.118813226792\\
46	63.6073513686872\\
47	63.11256180897\\
48	62.6336103430921\\
49	62.1696944759859\\
50	61.7200690385866\\
51	61.2840308528604\\
52	60.8609364632858\\
53	60.4501759242211\\
54	60.0511751248594\\
55	59.6633904536556\\
56	59.2863240075934\\
57	58.919503074876\\
58	58.562474662124\\
59	58.2148204813655\\
60	57.8761614715526\\
61	57.5461087018865\\
62	57.2243114911749\\
63	56.9104418488918\\
64	56.6041896681487\\
65	56.3052489045483\\
66	56.0133528694201\\
67	55.7282216618918\\
68	55.449604946598\\
69	55.1772646934548\\
70	54.9109727750372\\
71	54.6505145349664\\
72	54.3956891046545\\
73	54.1462921686722\\
74	53.902137074085\\
75	53.6630490248175\\
76	53.4288567472619\\
77	53.199406201527\\
78	52.9745294514525\\
79	52.7540915533434\\
80	52.537945697172\\
81	52.3259586166212\\
82	52.1179979782957\\
83	51.9139451452136\\
84	51.713673738531\\
85	51.5170836652711\\
86	51.3240490340372\\
87	51.1344846269038\\
88	50.9482793005454\\
89	50.7653390224331\\
90	50.5855675120267\\
91	50.4088815169999\\
92	50.2351943507042\\
93	50.0644170818952\\
94	49.8964873376413\\
95	49.7313119699494\\
96	49.5688226837401\\
97	49.4089550373403\\
98	49.2516349709852\\
99	49.0967938079207\\
100	48.9443811879826\\
101	48.7943217352816\\
102	48.6465666677984\\
103	48.5010479471112\\
104	48.3577262836692\\
105	48.2165419970531\\
106	48.0774392055622\\
107	47.940365800817\\
108	47.8052833783405\\
109	47.6721358729041\\
110	47.5408885085105\\
111	47.4114891299336\\
112	47.2839048619731\\
113	47.1580906634962\\
114	47.0339960595212\\
115	46.9116041095847\\
116	46.7908566863889\\
117	46.6717313847201\\
118	46.5541868190544\\
119	46.4381895740342\\
120	46.3237176826284\\
};
\end{axis}

\end{tikzpicture}%
        \vspace*{-0.3in}
        \caption{The marginal cost $C_m$ as a function of $K$.}
        \label{fig:valet_marginal_cost}
    \end{minipage}
    \begin{minipage}[b]{0.008\linewidth}
        \hfill
    \end{minipage}
    \begin{minipage}[b]{0.32\linewidth}
        \centering
%
%
\begin{tikzpicture}

\begin{axis}[%
width=1.694in,
height=1.03in,
at={(0.378in,0.457in)},
scale only axis,
xmin=20,
xmax=120,
xlabel style={font=\color{white!15!black}},
xlabel={Charging Station Density $K$},
ymin=0,
ymax=0.04,
scaled y ticks=base 10:2,
ytick={0, 0.02, 0.04},
yticklabels={0.0, 2.0, 4.0},
ylabel style={font=\color{white!15!black}},
ylabel={Response Time (min)},
axis background/.style={fill=white}
]
\addplot [color=black, line width=1.0pt, forget plot]
  table[row sep=crcr]{%
20	0.0373812394220379\\
21	0.0257777252368972\\
22	0.018144524185862\\
23	0.0130071317651439\\
24	0.00947893802162808\\
25	0.00701175857309065\\
26	0.00525817867408236\\
27	0.00399320811876256\\
28	0.00306837249923208\\
29	0.00238436490575884\\
30	0.00187531487479539\\
31	0.0015014312325617\\
32	0.00124293089537208\\
33	0.00107733455414025\\
34	0.000969443353275437\\
35	0.000892686317040335\\
36	0.000833437941771556\\
37	0.000785023543020693\\
38	0.000743930694277319\\
39	0.000708138838078152\\
40	0.000676391578518883\\
41	0.000647853768834542\\
42	0.000621942030705328\\
43	0.000598230780056088\\
44	0.00057639536801352\\
45	0.000556185178478657\\
46	0.000537397607590991\\
47	0.000519869474145536\\
48	0.000503466110982519\\
49	0.000488071533022066\\
50	0.000473588773171961\\
51	0.000459934897303691\\
52	0.000447036487466191\\
53	0.00043483025909282\\
54	0.000423260433978033\\
55	0.000412276726808264\\
56	0.000401836314906661\\
57	0.000391897403886149\\
58	0.000382425226026752\\
59	0.000373388399779471\\
60	0.000364757670878085\\
61	0.000356504928909821\\
62	0.000348607704143083\\
63	0.000341043223170533\\
64	0.000333791058857124\\
65	0.000326833113662176\\
66	0.000320151992278043\\
67	0.000313731284089332\\
68	0.000307557427739942\\
69	0.00030161616641851\\
70	0.000295895362253269\\
71	0.000290382779225934\\
72	0.000285068238044713\\
73	0.000279941178265919\\
74	0.000274992209475167\\
75	0.000270212669816258\\
76	0.000265594500532855\\
77	0.000261129357796185\\
78	0.000256810393279876\\
79	0.00025263087274029\\
80	0.000248584419082451\\
81	0.000244664866112535\\
82	0.000240866577566902\\
83	0.000237184357521751\\
84	0.000233613060914997\\
85	0.000230148078328627\\
86	0.000226784770315794\\
87	0.000223519075031573\\
88	0.000220346636224983\\
89	0.000217264205359632\\
90	0.00021426808814269\\
91	0.000211354210569606\\
92	0.000208520091116759\\
93	0.000205762455599497\\
94	0.000203078408683759\\
95	0.000200464916301134\\
96	0.000197919659694553\\
97	0.000195439730139498\\
98	0.000193023308617558\\
99	0.000190667766161338\\
100	0.000188370951397049\\
101	0.000186130846073164\\
102	0.00018394548946109\\
103	0.000181813132110937\\
104	0.000179731302872346\\
105	0.000177698985061352\\
106	0.000175714435717668\\
107	0.000173775776215689\\
108	0.000171881874698943\\
109	0.000170031171068326\\
110	0.000168221727010178\\
111	0.000166453205217826\\
112	0.00016472323076612\\
113	0.000163031505353525\\
114	0.000161376492235484\\
115	0.000159756999176564\\
116	0.000158172247638342\\
117	0.000156620741014253\\
118	0.000155101871025014\\
119	0.000153614465787302\\
120	0.000152157557250082\\
};
\end{axis}

\end{tikzpicture}%
        \vspace*{-0.3in}
        \caption{The response time $t_r$ as a function of $K$.}
        \label{fig:valet_response_time}
    \end{minipage}
    
    \begin{minipage}[b]{0.32\linewidth}
        \centering
%
%
\begin{tikzpicture}

\begin{axis}[%
width=1.694in,
height=1.03in,
at={(0.454in,0.457in)},
scale only axis,
xmin=20,
xmax=120,
xlabel style={font=\color{white!15!black}},
xlabel={Charging Station Density $K$},
ymin=7.5,
ymax=15,
ylabel style={font=\color{white!15!black}},
ylabel={Pickup Time (min)},
axis background/.style={fill=white}
]
\addplot [color=black, line width=1.0pt, forget plot]
  table[row sep=crcr]{%
20	14.4252664456148\\
21	13.48735100679\\
22	12.6809447598229\\
23	11.9805521317035\\
24	11.3669481621546\\
25	10.8253473933299\\
26	10.3441730750407\\
27	9.91423427233016\\
28	9.52820022938606\\
29	9.18049006858029\\
30	8.86840962519982\\
31	8.59684551656855\\
32	8.38390121562625\\
33	8.24138966688934\\
34	8.15249261698412\\
35	8.09496785349397\\
36	8.05533232802139\\
37	8.0265106433029\\
38	8.00467602784315\\
39	7.98761917992894\\
40	7.97398275120577\\
41	7.96288560761416\\
42	7.9537312128866\\
43	7.94610056930395\\
44	7.93968772306294\\
45	7.9342668542719\\
46	7.92966368261929\\
47	7.92574484848287\\
48	7.92240566875565\\
49	7.91955964118144\\
50	7.91713824534141\\
51	7.91508584871671\\
52	7.91335482786461\\
53	7.91190634519863\\
54	7.91070764339097\\
55	7.90972990819105\\
56	7.90895091450882\\
57	7.90834714624597\\
58	7.90790185123968\\
59	7.90760045859273\\
60	7.90742920872197\\
61	7.90737408929789\\
62	7.90742661537716\\
63	7.90757641716909\\
64	7.90781516418978\\
65	7.9081359707375\\
66	7.90853186362286\\
67	7.90899599208094\\
68	7.90952431851808\\
69	7.91011110778178\\
70	7.91075246129123\\
71	7.91144351755744\\
72	7.91218164987128\\
73	7.91296289510942\\
74	7.91378413880725\\
75	7.91464296106096\\
76	7.91553711792211\\
77	7.91646315641187\\
78	7.91741961119114\\
79	7.91840459209199\\
80	7.91941627814396\\
81	7.92045267424372\\
82	7.92151222583206\\
83	7.92259371697484\\
84	7.92369555406118\\
85	7.92481673658187\\
86	7.92595576676294\\
87	7.92711189572591\\
88	7.92828338020301\\
89	7.92947039394547\\
90	7.93067185718205\\
91	7.93188549983595\\
92	7.93311215432428\\
93	7.93435065479211\\
94	7.93560049196579\\
95	7.93686008316101\\
96	7.93812984833841\\
97	7.93940832174992\\
98	7.94069605233818\\
99	7.94199190066165\\
100	7.94329556445277\\
101	7.94460631573747\\
102	7.94592406742583\\
103	7.94724880257668\\
104	7.94857835015828\\
105	7.94991422579996\\
106	7.95125577867654\\
107	7.95260194888038\\
108	7.95395340111616\\
109	7.95530952492358\\
110	7.95666881107685\\
111	7.9580331912164\\
112	7.9594001310427\\
113	7.96077131588843\\
114	7.96214573075152\\
115	7.96352298629781\\
116	7.96490370446593\\
117	7.96628648862477\\
118	7.96767218646651\\
119	7.96906004577618\\
120	7.97044978872756\\
};
\end{axis}
\end{tikzpicture}%
        \vspace*{-0.3in}
        \caption{The pickup time $t_p$ as a function of $K$.}
        \label{fig:valet_pickup_time}
    \end{minipage}
    \begin{minipage}[b]{0.008\linewidth}
        \hfill
    \end{minipage}
    \begin{minipage}[b]{0.32\linewidth}
        \centering
%
%
\begin{tikzpicture}

\begin{axis}[%
width=1.694in,
height=1.03in,
at={(0.454in,0.457in)},
scale only axis,
xmin=20,
xmax=120,
xlabel style={font=\color{white!15!black}},
xlabel={Charging Station Density $K$},
ymin=10,
ymax=26,
ylabel style={font=\color{white!15!black}},
ylabel={Delivery Time (min)},
axis background/.style={fill=white}
]
\addplot [color=black, line width=1.0pt, forget plot]
  table[row sep=crcr]{%
20	25.4558441227157\\
21	24.8423601363247\\
22	24.2711950486767\\
23	23.7376970422483\\
24	23.2379000772445\\
25	22.7683991532123\\
26	22.3262522260575\\
27	21.9089023002066\\
28	21.5141149680191\\
29	21.1399279025293\\
30	20.7846096908265\\
31	20.4466260328943\\
32	20.1246117974981\\
33	19.8173477722745\\
34	19.5237412036791\\
35	19.2428094176946\\
36	18.9736659610103\\
37	18.7155088167613\\
38	18.4676103375328\\
39	18.229308607506\\
40	18\\
41	17.7791327396926\\
42	17.5662013130736\\
43	17.3607415981348\\
44	17.1623266064207\\
45	16.9705627484771\\
46	16.7850865483256\\
47	16.6055617445199\\
48	16.431676725155\\
49	16.2631422522945\\
50	16.0996894379985\\
51	15.9410679397217\\
52	15.7870443475265\\
53	15.6374007394705\\
54	15.4919333848297\\
55	15.350451577604\\
56	15.2127765851133\\
57	15.078740698501\\
58	14.9481863736732\\
59	14.8209654526671\\
60	14.6969384566991\\
61	14.5759739432205\\
62	14.457947920245\\
63	14.3427433120127\\
64	14.2302494707577\\
65	14.1203617299493\\
66	14.0129809949074\\
67	13.9080133671526\\
68	13.8053697992527\\
69	13.7049657772839\\
70	13.6067210283322\\
71	13.5105592507344\\
72	13.4164078649987\\
73	13.3241977835569\\
74	13.2338631976885\\
75	13.145341380124\\
76	13.0585725019802\\
77	12.973499462816\\
78	12.8900677327098\\
79	12.8082252053668\\
80	12.7279220613579\\
81	12.6491106406735\\
82	12.5717453238524\\
83	12.4957824210112\\
84	12.4211800681624\\
85	12.3478981302606\\
86	12.2758981104685\\
87	12.2051430651746\\
88	12.1355975243384\\
89	12.0672274167718\\
90	12\\
91	11.9338837943723\\
92	11.8688485211242\\
93	11.8048650441112\\
94	11.741905314962\\
95	11.6799423214149\\
96	11.6189500386223\\
97	11.5589033832228\\
98	11.4997781699989\\
99	11.4415510709471\\
100	11.3841995766062\\
101	11.3277019594959\\
102	11.2720372395327\\
103	11.2171851512956\\
104	11.1631261130288\\
105	11.1098411972706\\
106	11.0573121030111\\
107	11.0055211292834\\
108	10.9544511501033\\
109	10.9040855906769\\
110	10.8544084047995\\
111	10.805404053378\\
112	10.7570574840095\\
113	10.7093541115571\\
114	10.6622797996637\\
115	10.6158208431526\\
116	10.5699639512647\\
117	10.5246962316844\\
118	10.4800051753125\\
119	10.4358786417441\\
120	10.3923048454133\\
};
\end{axis}

\end{tikzpicture}%
        \vspace*{-0.3in}
        \caption{The delivery time $t_d$ as a function of $K$.}
        \label{fig:valet_delivery_time}
    \end{minipage}
    \begin{minipage}[b]{0.008\linewidth}
        \hfill
    \end{minipage}
    \begin{minipage}[b]{0.32\linewidth}
        \centering
%
%
\begin{tikzpicture}

\begin{axis}[%
width=1.694in,
height=1.03in,
at={(0.454in,0.457in)},
scale only axis,
xmin=20,
xmax=120,
xlabel style={font=\color{white!15!black}},
xlabel={Charging Station Density $K$},
ymin=0,
ymax=15.5,
ylabel style={font=\color{white!15!black}},
ylabel={Waiting Time (min)},
axis background/.style={fill=white}
]
\addplot [color=black, line width=1.0pt, forget plot]
  table[row sep=crcr]{%
20	4.88663857183048e-15\\
21	3.74101369073425e-13\\
22	1.72344495443336e-11\\
23	5.19413520265122e-10\\
24	1.09406274378035e-08\\
25	1.6987960654159e-07\\
26	2.03163113591319e-06\\
27	1.94123800216694e-05\\
28	0.000152873097963362\\
29	0.0010177096440093\\
30	0.00580469006207588\\
31	0.0276527823650869\\
32	0.0990978656305413\\
33	0.244595716977431\\
34	0.446381572007505\\
35	0.676932927986483\\
36	0.919901363329815\\
37	1.16725490695241\\
38	1.41505675681435\\
39	1.66132091906304\\
40	1.90502657726166\\
41	2.14565918754298\\
42	2.38297852921324\\
43	2.61689908574961\\
44	2.84742175922647\\
45	3.07459838184116\\
46	3.29851527843574\\
47	3.51926558710907\\
48	3.73695401967925\\
49	3.95168954991525\\
50	4.1635794005342\\
51	4.37272702073129\\
52	4.57923453500874\\
53	4.78319858439978\\
54	4.98471199951476\\
55	5.1838638436536\\
56	5.38073312722369\\
57	5.57541162005713\\
58	5.76796776827621\\
59	5.95847268271721\\
60	6.14699651245087\\
61	6.33359865129963\\
62	6.51834847352181\\
63	6.70131026193465\\
64	6.88252380302495\\
65	7.0620515795635\\
66	7.23994022948681\\
67	7.41623476561643\\
68	7.59099419379777\\
69	7.76424709291701\\
70	7.93604306674527\\
71	8.10641806309781\\
72	8.2754133913968\\
73	8.44305922310329\\
74	8.60940009984154\\
75	8.77446139355296\\
76	8.93827631760465\\
77	9.10087731121419\\
78	9.2622919806908\\
79	9.4225492027433\\
80	9.58167604390758\\
81	9.73969823623952\\
82	9.89664099580691\\
83	10.0525283411594\\
84	10.207383653755\\
85	10.3612287609652\\
86	10.5140856905695\\
87	10.66597492577\\
88	10.8169172011453\\
89	10.9669305998104\\
90	11.1160336404581\\
91	11.2642468114774\\
92	11.4115851433937\\
93	11.5580661851186\\
94	11.7036950434036\\
95	11.848519592085\\
96	11.9925267769094\\
97	12.1357308700123\\
98	12.2781660749732\\
99	12.4198316464359\\
100	12.5607292136167\\
101	12.7009096996317\\
102	12.8403587406739\\
103	12.9790849144719\\
104	13.1171157725648\\
105	13.2544492138457\\
106	13.3911057198792\\
107	13.5270951890135\\
108	13.6624276654579\\
109	13.7971181756295\\
110	13.9311573468799\\
111	14.0645939607168\\
112	14.1973778449938\\
113	14.3295705296933\\
114	14.4611646869732\\
115	14.5921690807024\\
116	14.7225935368117\\
117	14.8524413338206\\
118	14.9817262100643\\
119	15.110455495302\\
120	15.2386354338329\\
};
\end{axis}

\end{tikzpicture}%
        \vspace*{-0.3in}
        \caption{The waiting time $t_w$ as a function of $K$.}
        \label{fig:valet_waiting_time}
    \end{minipage}
\end{figure*}

\subsection{Analysis} \label{section:analysis_valet}

According to numerical results, the market outcomes under different charging station densities $K$ can be segmented into four regimes:
\begin{itemize}
    \item When $K \leq 37$, the market expands sharply, which benefits all participants. The service price drops and the service quality improves. The platform's profit rises. Couriers receive higher wages. The number of couriers is maximized at $K^*_N = 37$.
    \item When $37 < K \leq 57$, the courier fleet size reduces, while more customers use valet charging services. The platform's profit keeps increasing. The number of customers peaks at $K^*_{\lambda} = 57$.
    \item When $57 < K \leq 98$, both the demand and supply reduce. Conversely, the platform's profit remains increasing and reaches the maximum at $K^*_{\Pi} = 98$.
    \item When $K > 98$, customers, couriers, and the platform suffer varying degrees of loss: customers pay a higher cost, couriers earn a lower wage, and the platform receives a lower profit. The decrease of courier supply outpaces the decrease of customer demand.
\end{itemize}

First and foremost, \autoref{fig:valet_EV_penetration} shows that the proposed business model can effectively increase EV penetration. Without valet charging services, the EV penetration is 36.6\%, which is derived by setting $c_v$ to be infinity in \eqref{eqn:demand_logit}. After introducing valet charging, even with the same public charger supply, the EV penetration can be raised to 42.3\% at $K = K^*_{\lambda}$. This confirms the effectiveness of valet charging in promoting EV adoption. However, it is important to note that EV penetration is sensitive to the density of charging stations. Therefore, to fully unlock the potential of valet charging in boosting EV penetration, the city planner should carefully decide $K$ in response to the new business model. For instance, the EV penetration merely increases by less than 1\% at a sub-optimal charging station density, e.g., $K = 20$.

The results also indicate that the charging station density should fall in a certain range, i.e., $K \in [K^*_N, K^*_{\Pi}]$, where the delivery time and queuing time are traded off. The city planner should determine an optimal deployment policy within this range in accordance with her objective: (a) select $K^*_N$ to maximize courier surplus, (b) select $K^*_{\lambda}$ to maximize EV penetration, or (c) select $K^*_{\Pi}$ to maximize the platform's profit. As shown in \autoref{fig:valet_social_welfare}, the social welfare is maximized between $[K^*_N, K^*_{\Pi}]$, where the benefits of multiple stakeholders are balanced. Any charging station density outside this range will lead to inefficient market outcomes and inferior surplus for the stakeholders. For instance, at a lower charging station density, i.e., $K < K^*_N$, the delivery time is so long that the platform needs to employ a larger courier fleet to fulfill the demand. The high marginal cost (see \autoref{fig:valet_marginal_cost}) deters the platform from serving more customers, resulting in a sluggish market: charging stations are under-utilized in this regime.\footnote{The definition of marginal cost is given in \ref{appendix:definition_of_Cm_Rm}.} At a higher charging station density, i.e., $K > K^*_{\Pi}$, charging stations are congested and all stakeholders are worse off: fewer EV owners adopt valet charging, fewer couriers are hired, and the platform suffers a profit decline.

The above discussion discloses that the interests of customers, couriers, and the platform are not aligned. As the delivery time is counted twice in \eqref{eqn:valet_cost}, compared to couriers, customers prefer a slightly higher charging station density, which shortens the time wasted on the road while does not result in long charging queues. Hence, it can decrease the generalized cost of using valet charging services. On the other hand, the platform prefers a much higher charging station density as it can reduce the marginal cost. After employing dedicated coordinators to handle on-site operations, the platform becomes less sensitive to charging congestion and tends to expand the market to compensate for the fixed cost. This is an important observation since it indicates the charging infrastructure should be deployed to cohere with the goals of city planners. In future work, multi-objective optimization can be employed to explore the Pareto frontier of the optimal charging infrastructure planning strategy.

Another important finding is obtained from the market outcomes in $[K_N^*, K_{\lambda}^*]$, where increasing $K$ leads to a downsized courier fleet and an increased valet charging demand. This is surprising as it reveals that the platform can serve more customers with fewer couriers. We show that this regime always exists regardless of model parameters. Denote $\lambda(K)$ and $N(K)$ as the demand and supply at equilibrium under density $K$. The conclusion is formally summarized as below:

\begin{proposition} \label{prop:different_optimal_K}
    Assume that $\lambda(K)$ and $N(K)$ first increase and then decrease with respect to K, and have a unique maxima at $K^*_N$ and $K^*_\lambda$, respectively. For any pickup time model $t_p(N_i)$ following the square root law \eqref{eqn:pickup_time} and any supply model such that $N w(N)$ is convex, we always have $K^*_N < K^*_\lambda$.
\end{proposition}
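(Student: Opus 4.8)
Since $\lambda(K)$ is assumed unimodal with a unique maximum at $K^*_\lambda$, it suffices to show $\lambda'(K^*_N)>0$: then $K^*_N$ lies strictly to the left of the peak of $\lambda(\cdot)$, i.e. $K^*_N<K^*_\lambda$. The plan is to read this off from the platform's first-order conditions by implicit differentiation in $K$. Recall that after the change of variables the platform maximizes $\Pi(\lambda,N;K)=\lambda c_v(\lambda)-\lambda\big(\alpha(t_r+t_p)+\beta(2t_d+t_w)\big)-Nw(N)-KC$, where crucially $t_w$ and $t_d$ do not depend on $N$, and $t_r,t_p$ depend on $K$ only through $t_d=\theta\sqrt{A/K}$, which is strictly decreasing in $K$. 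Writing $D(\lambda,N,K):=t_r+t_p$, the first-order condition in $N$ is $-\alpha\lambda\,D_N(\lambda,N,K)=(Nw)'(N)$ (note $D_N<0$), and the one in $\lambda$ sets $\partial_\lambda\Pi=0$; together these determine the equilibrium path $K\mapsto(\lambda(K),N(K))$, which we take to be smooth on the interior, normal-regime range of interest (justified by the implicit function theorem, with $Nw(N)$ convex keeping the second-order conditions well-behaved).

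Differentiating the $N$-condition along the equilibrium path and evaluating at $K=K^*_N$, where $N'(K^*_N)=0$, yields
\[
\lambda'(K^*_N)\;=\;-\,\frac{\partial^2_{NK}\Pi}{\partial^2_{N\lambda}\Pi}\Big|_{K^*_N}\;=\;-\,\frac{\lambda\,D_{NK}}{\partial_\lambda(\lambda D_N)}\Big|_{K^*_N}.
\]
So the claim reduces to two sign facts about the queuing block: (i) $\partial_\lambda(\lambda D_N)<0$, i.e. $\partial^2_{N\lambda}\Pi>0$ — the platform's profit is supermodular in demand and supply, so extra couriers are more valuable when demand is larger; and (ii) $D_{NK}>0$. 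For (ii), since $D$ depends on $K$ only through $t_d$ with $t_d'(K)<0$, and since $D_{t_d}>0$ in the normal regime (a longer delivery leg shrinks the idle pool $N_i$ and hence lengthens both $t_p$ and $t_r$), the inequality is equivalent to $\partial_N D_{t_d}<0$: with a larger fleet, the delivery-side delay is less sensitive to the delivery distance. Granting (i) and (ii), the displayed identity gives $\lambda'(K^*_N)=-\,(+)(+)/(-)>0$, and the proposition follows.

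The technical heart is verifying (i) and (ii). The useful reduction is to exploit the algebraic identities hidden in the queuing model: from $N_i=N-2\lambda(t_p+t_d)$ one has $2\lambda(t_p+t_d)=N-N_i$ and $1-\rho_d=N_i/N$, so the Sakasegawa approximation collapses to $t_r=\dfrac{N}{2\lambda N_i}\big(1-N_i/N\big)^{\sqrt{2N+2}}$ while $t_p=\phi\sqrt{A/N_i}$; hence $D=t_r+t_p$ is an explicit function of $(\lambda,N,N_i)$, with $N_i$ tied to $t_d$ by the single fixed-point relation $t_d=\dfrac{N-N_i}{2\lambda}-\phi\sqrt{A/N_i}$. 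Implicitly differentiating through this relation, and invoking the normal-regime bound $N_i\ge \lambda t_p$ established in \autoref{prop:optimal_condition} and \autoref{lemma:WGC} (which in particular forces $\partial_N N_i\ge 1$ and controls the sign of $\partial_{N_i}t_d$), one pins down the signs of $D_N$, $\partial_\lambda(\lambda D_N)$, $D_{t_d}$ and $\partial_N D_{t_d}$. I expect step (ii) to be the main obstacle — it requires the second-order cross behaviour of $t_r$, which mixes the $\rho_d^{\sqrt{2N+2}}$ factor with the implicit $N_i$-dependence — and I expect the inequality $N_i\ge\lambda t_p$ to be exactly the lever that makes the signs come out.
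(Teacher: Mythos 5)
Your route is essentially the paper's own: both arguments work off the first-order condition in $N$ (equation \eqref{eqn:FOC_2}), observe that at $K=K^*_N$ the supply is stationary so the term $w(N)+Nw'(N)$ -- pinned down by convexity of $Nw(N)$ -- cannot move, and then force $\lambda$ to rise with $K$ from sign and monotonicity properties of $\partial_N(t_r+t_p)$ with respect to $\lambda$ and $K$. Your implicit-differentiation identity $\lambda'(K^*_N)=-\lambda D_{NK}\big/\partial_\lambda(\lambda D_N)$ is just a differential restatement of the paper's comparative-statics step (the paper uses one-sided derivatives $\nabla_+N(K^*_N)=0$, $\nabla_+\lambda(K^*_N)>0$), and your two ``sign facts'' (i) $\partial_\lambda(\lambda D_N)<0$ and (ii) $D_{NK}>0$ are exactly the statements the paper proves, namely that the right-hand side of \eqref{eqn:FOC_rewritten} is increasing in $\lambda$ and decreasing in $K$.

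The gap is that you stop precisely where the work begins: (i) and (ii) are the technical content of the proposition, and you neither verify them nor complete the reduction -- you reduce (ii) to $\partial_N D_{t_d}<0$ and then state that you ``expect'' the normal-regime bound $N_i\ge\lambda t_p$ to make the signs come out, flagging the $t_r$ cross-derivative as the main obstacle. The paper closes exactly these steps by explicit computation: $\partial t_p/\partial N=-\tfrac{1}{2}t_p/(N_i-\lambda t_p)$, which is negative, increasing in $N$ and $K$, and decreasing in $\lambda$ once \autoref{prop:optimal_condition} and \autoref{lemma:WGC} give $N_i>\lambda t_p$ (with $\partial N_i/\partial t_d=-2\lambda N_i/(N_i-\lambda t_p)<0$ handling the $K$-dependence); and $\partial t_r/\partial N$ is written as $-\Psi_1\Psi_2-\Psi_3$ with $\Psi_1,\Psi_2,\Psi_3>0$, each term checked to decrease as $N$ or $K$ increases and to increase as $\lambda$ increases, which delivers the required monotonicities. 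Until you reproduce (or replace) those verifications, the sign of your displayed quotient is undetermined and $\lambda'(K^*_N)>0$ does not follow; as written, the proposal is a correct proof plan along the paper's lines, not a proof.
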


The proof can be found in \ref{appendix:proof_prop_different_optimal_K}. The convexity of $N w(N)$ follows the law of diminishing marginal utility \cite{pindyck_microeconomics_2018}.\footnote{As more couriers are employed, hiring an additional courier will yield greater additions to labor costs, which is equivalent to smaller additions to the platform's utility.} \autoref{prop:different_optimal_K} points out that compared with couriers, customers always prefer charging stations to be denser, i.e., $K^*_N < K^*_{\lambda}$. To understand the reason behind this result, we rewrite the first-order condition \eqref{eqn:FOC_2} as
\begin{equation} \label{eqn:marginal_analysis}
    w + N w^\prime(N) = - \alpha \lambda \left( \dfrac{\partial t_r}{\partial N} + \dfrac{\partial t_p}{\partial N} \right).
\end{equation}
The left-hand (right-hand) side of \eqref{eqn:marginal_analysis} corresponds to the marginal cost (revenue) for hiring one extra unit of courier when fixing the demand. The right-hand side decreases with $K$ but increases with $\lambda$. We will show these in the detailed proof. At $K = K^*_N$, $N$ attains its maximum. If we further increase $K$ marginally, $N$ remains constant,\footnote{This is because $\dfrac{\partial N}{\partial K}$ = 0 at $K = K^*_N$.} and therefore the left-hand side of \eqref{eqn:marginal_analysis} keeps unchanged. To maintain the equality, $\lambda$ must increase. This indicates that the customer arrival rate still increases after $N$ reaches the maximum, which dictates that $K^*_N < K^*_{\lambda}$. This further proves that the second regime always exists regardless of model parameters: no matter what parameter value we select, there is always a regime in which the platform can hire fewer couriers to serve more customers and meanwhile earn a higher profit when the charging station density increases.

\begin{remark}
    The intuition behind the assumption in \autoref{prop:different_optimal_K} that $\lambda(K)$ and $N(K)$ first increase and then decrease roots in the aforementioned trade-off between the delivery time $t_d$ and the waiting time $t_w$. We first note that $t_d(K)$ is convex and decreasing, while $t_w(K)$ is convex and increasing. When $K$ is small, increasing $K$ will lead to dramatic decrease of the delivery time, while the waiting time increases slowly. In this case, the platform always tends to expand the market under such favorable market conditions. On the contrary, if $K$ is large and is further increased, the adverse effect from the high density outweighs the beneficial effect, resulting in a shrinking market. In light of this trade-off, we assume that both the demand and supply will first increase and then decrease with the charging station density. Although we do not rigorously prove this assumption due to the complex interaction among endogenous variables, it has been validated by the substantial sensitivity analysis in \autoref{section:sensitivity_analysis}. The results demonstrate that this assumption is insensitive to model parameters.
\end{remark}

Note that although our simulation results in \autoref{fig:valet_demand}-\ref{fig:valet_profit} show that $K^*_{\Pi} > K^*_{\lambda}$, we will not prove this because the platform's profit is sensitive to the value of fixed cost $C$. If it is very large, the payment to dedicated coordinators will rapidly increase with $K$. In other words, $K^*_{\Pi}$ would reduce as $C$ increases. In view of this, the general relation between $K^*_{\lambda}$ and $K^*_{\Pi}$ cannot be determined. Nonetheless, we argue that $K^*_{\Pi} > K^*_{\lambda}$ holds for any $C$ in the regime of practical interest, and it will be violated only when $C$ is prohibitively high. We will include this as a sensitivity analysis in \autoref{section:sensitivity_analysis}.

\begin{figure}[htbp]
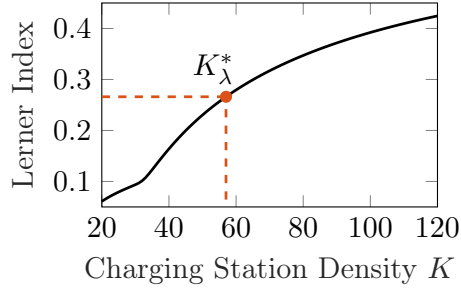

    \centering
    \includestandalone[width=0.35\linewidth]{figure/fig_valet_lerner_index}
    \caption{Lerner Index $L$ as a function of $K$.}
    \label{fig:valet_lerner_index}
\end{figure}

In addition, we observe that the platform has a strong market power that compromises the efficiency of the market outcomes. As $K$ keeps going up from $K^*_N$, the marginal cost continues to drop, while the price barely changes. This underscores a growing market power, which can be measured by the Lerner Index:
\begin{equation} \label{eqn:lerner_index}
    L = \dfrac{p_v - C_m}{p_v},
\end{equation}
where $C_m$ refers to the marginal cost. The Lerner Index measures the relative distance between the market price and the competitive price, thus the higher $L$ is, the higher market power the platform has \cite{pindyck_microeconomics_2018}. As shown in \autoref{fig:valet_lerner_index}, the platform charges a 26.6\% markup over the marginal cost at $K^*_{\lambda}$. For $K \in[K^*_{\lambda}, K^*_{\Pi}]$, it enjoys a profit growth even though both demand and supply are shrinking. Compared with a competitive market, most benefits of increased charging convenience are reaped by the monopolistic platform. In this light, regulatory interventions are in need to weaken the platform's market power.

\section{Taxation Regulation} \label{section:taxation}
This section examines a prospective taxation scheme for valet charging market. According to the numerical results in the precedent section, the platform's market power is notably high, e.g., $L(K^*_{\lambda}) = 26.6\%$, which motivates the government to regulate the market.

\subsection{Taxation Scheme}
One way to suppress the platform's market power is to nurture a competitive market by introducing competition among platforms. However, like Uber, Airbnb and other two-sided platforms, the success of their business models crucially depends on the economy of scale. In this case, a competitive but fragmented market may lead to inefficient matching between supply and demand, incurring inferior market outcomes.

Likewise, we argue that the valet charging platform should enjoy a concentrated market, but meanwhile be regulated by the government to ensure fairness and efficiency. Inspired by the congestion tax for vehicles that create traffic congestion on the road \cite{li2021impact}, a prospective policy is to impose a ``valet-charging'' tax on the platform to penalize the charging congestion caused by this service. Yet, purely taxing may discourage the platform's willingness to expand the market, which contradicts the objective to promote EV adoption. To avoid excessive disincentives, the tax revenue is invested in providing more public chargers to accommodate a larger market. To put it another way, the platform is obligated to undertake part of the cost for EV charging infrastructure. This will not only curb the platform's market power but also bring more EVs into use by providing more public chargers. The city planner can regulate the platform by deciding the tax rate.

Suppose a per-service tax $p_t$ is imposed on the platform for serving each customer. The tax revenue $\lambda p_t$ is in turn put into investing public charging facilities. We denote $r$ as the prorated installation cost of each public charger.\footnote{The cost includes (a) equipment price and installation fee, and (b) maintenance cost. As the former is sunk, we split it into a daily expense over a long period and combine it with the latter to derive a per-time cost of each charger.} Under this taxation scheme, the profit-maximizing problem becomes:
\begin{equation} \label{eqn:tax_profit_maximization}
    \max_{\lambda, N} \quad \lambda (p_v - p_t) - N w - K C
\end{equation}
\begin{subnumcases} {\label{eqn:tax_profit_maximization_constraint}}
    \lambda = \tau\lambda_0 F_v \big(p_v + \alpha (t_p + t_r) + \beta (t_w + 2 t_d) \big) \label{eqn:tax_constraint_demand}\\
    N = N_0 F_c\left(w\right) \label{eqn:tax_constraint_supply}\\
    M = M_0 + \dfrac{\lambda p_t}{r} \label{eqn:tax_constraint_M}\\
    t_r = \dfrac{\rho_d^{\sqrt{2N + 2}}}{2\lambda(1 - \rho_d)} \label{eqn:tax_constraint_response_time}\\
    t_p = \dfrac{\phi}{\sqrt{(N - 2 \lambda(t_p + t_d))/A}} \label{eqn:tax_constraint_pickup_time}\\
    t_w = \dfrac{K \rho_c^{\sqrt{2M/K+2}}}{\lambda(1 - \rho_c)} \label{eqn:tax_constraint_waiting_time}\\
    \lambda (p_v - p_t) - N w - K C \geq 0
    \label{eqn:tax_constraint_profitability}
\end{subnumcases}
where \eqref{eqn:tax_constraint_M} specifies the updated charger supply after the tax revenue is used to expand the public charging infrastructure, and \eqref{eqn:tax_constraint_profitability} guarantees that the platform is economically viable under the taxation scheme. Clearly, for the platform, the profit-optimizing decisions depend on the exogenous tax rate $p_t$ and charging station density $K$.

On top of \eqref{eqn:tax_profit_maximization}, we introduce a regulatory agency that determines the charging station density and the tax rate to maximize the EV penetration, i.e., $\lambda$, subject to the the platform's profit-maximizing decisions in \eqref{eqn:tax_profit_maximization}. The decision-making process follows the Stackelberg game, where the regulatory agency plays as the leader and the platform plays as the follower \cite{osborne_introduction_2004}. To evaluate the impact of this taxation scheme, under each exogenous tax rate $p_t$, we first find the optimal charging station density at which the valet charging demand is maximized. We then vary $p_t$ and trace the market outcomes as a function of $p_t$. 

For notation convenience, we denote by $\lambda (p_t, K)$ and $\Pi(p_t, K)$ the valet charging demand and the platform's profit at tax rate $p_t$ and charging station density $K$. For each $p_t$, we further denote by $K^*(p_t)$ the optimal charging station density whereby the demand is maximized, and denote by $\lambda^*(p_t) = \lambda(p_t, K^*(p_t))$ the maximal valet charging demand. Regarding the charger cost $r$, we have the following proposition:

\begin{proposition} \label{prop:existence_of_r_hat}
    When $\Pi(0, K^*(0)) \geq 0$, there exists a threshold $\hat{r} > 0$ such that
    \begin{enumerate}
        \item [(i)] if $r \in (0, \hat{r}), \exists p_t > 0, \lambda^*(p_t) > \lambda^*(0)$;
        \item [(ii)] if $r \in (\hat{r}, +\infty), \forall p_t > 0, \lambda^*(p_t) < \lambda^*(0)$.
    \end{enumerate}
\end{proposition}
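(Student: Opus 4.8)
The plan is to reduce the statement to three ingredients: (a) a monotonicity property of the profit-maximizing demand in the charger cost $r$; (b) the behavior of the scheme as $r\to 0^+$; and (c) its behavior as $r\to+\infty$. The threshold $\hat r$ will then be \emph{defined} as a supremum, and parts (i)--(ii) will follow from (a)--(c) by an elementary argument.

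The key lemma I would prove first is the following comparative statics in $r$: for every fixed $p_t>0$ and every fixed $K$, the equilibrium demand $\lambda(p_t,K;r)$ is \emph{strictly decreasing} in $r$, whereas $\lambda(0,K;r)$ does not depend on $r$ (because $p_t=0$ forces $M=M_0$ in \eqref{eqn:tax_constraint_M}). The chain of reasoning is: a larger $r$ gives a smaller charger supply $M$ for any fixed $(\lambda,p_t)$ by \eqref{eqn:tax_constraint_M}; a smaller $M$ raises $\rho_c$ and lowers the exponent $\sqrt{2M/K+2}$, hence strictly increases $t_w$ by \eqref{eqn:tax_constraint_waiting_time}; and $t_w$ enters the platform's objective \eqref{eqn:tax_profit_maximization}--\eqref{eqn:tax_profit_maximization_constraint} only through the term $-\beta t_w$ in the generalized cost \eqref{eqn:valet_cost}, so an increase in $t_w$ shifts the inverse demand faced by the platform strictly downward at every $\lambda$. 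A monotone-comparative-statics argument on the platform's program (its objective has decreasing differences in $(\lambda,r)$) then yields that the profit-maximizing $\lambda$ is non-increasing in $r$, and the feedback $M=M_0+\lambda p_t/r$ only reinforces this, making it strictly decreasing whenever $p_t>0$. Maximizing over $K$ preserves the property, so $\lambda^*(p_t;r)$ is strictly decreasing in $r$ for $p_t>0$ and equals the $r$-independent constant $\lambda^*(0)$ at $p_t=0$; note $\lambda^*(0)>0$ because $\Pi(0,K^*(0))\ge 0$.

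For the limit $r\to 0^+$ I would differentiate $\lambda^*(p_t)=\lambda(p_t,K^*(p_t);r)$ at $p_t=0$. Since $K^*(0)$ maximizes $\lambda(0,\cdot)$, the envelope identity $\partial\lambda/\partial K\big|_{K^*(0)}=0$ kills the $K^*$-channel, leaving $\frac{d}{dp_t}\lambda^*(p_t)\big|_{0^+}=\frac{\partial\lambda}{\partial p_t}\big|_{p_t=0}=c_1+c_2/r$, where $c_1$ is the (finite, $r$-independent) direct effect of a per-service tax at frozen $M=M_0$ and $c_2=\lambda^*(0)\cdot\partial\lambda^*/\partial M\big|_{M_0}>0$ -- positive because one extra charger strictly shortens $t_w$ at the baseline, and a monopolist facing an upward-shifted inverse demand with unchanged cost strictly expands output. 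Hence for all sufficiently small $r$ this right-derivative is positive, so $\lambda^*(p_t)>\lambda^*(0)$ for small $p_t>0$: part (i) holds. For the limit $r\to+\infty$, the reinvested revenue $\lambda p_t/r$ purchases a vanishing number of chargers, so $t_w$ stays essentially at its baseline level while the per-service tax acts as a pure deadweight cost; confining $p_t$ to the bounded range on which the profitability constraint \eqref{eqn:tax_constraint_profitability} can hold (outside that range $\lambda^*=0<\lambda^*(0)$ trivially), I would show that for $r$ large enough $\lambda^*(\cdot;r)$ is strictly decreasing there, so $\lambda^*(p_t)<\lambda^*(0)$ for every $p_t>0$: part (ii) holds.

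Finally, set $\hat r:=\sup\{\,r>0:\exists\,p_t>0,\ \lambda^*(p_t;r)>\lambda^*(0)\,\}$. The $r\to0^+$ step shows this set contains an interval $(0,r^*)$, so $\hat r>0$; the $r\to+\infty$ step shows it is bounded, so $\hat r<+\infty$. By the lemma the set is downward-closed in $r$, hence equals $(0,\hat r)$ or $(0,\hat r]$, so every $r\in(0,\hat r)$ satisfies (i). For $r>\hat r$, the definition of $\hat r$ gives $\lambda^*(p_t;r)\le\lambda^*(0)$ for all $p_t>0$; if equality held for some $p_t^0>0$, strict monotonicity in $r$ would give $\lambda^*(p_t^0;r')>\lambda^*(0)$ for any $r'\in(\hat r,r)$, contradicting $r'>\hat r$ -- so the inequality is strict, establishing (ii). The main obstacle is the comparative-statics lemma: because $\lambda$, $N$, $t_w$ and $M$ are tied together through a fixed point, ``cheaper chargers $\Rightarrow$ larger demand'' is not immediate and requires verifying the relevant decreasing-differences / strict second-order conditions for the platform's program and checking that the $M=M_0+\lambda p_t/r$ feedback does not flip the sign; a secondary difficulty is making the $r\to+\infty$ estimate uniform in $p_t$, which is exactly why constraint \eqref{eqn:tax_constraint_profitability} is invoked.
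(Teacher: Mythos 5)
Your proposal is correct in outline and shares the paper's overall architecture --- establish the two limiting regimes $r\to 0^+$ and $r\to+\infty$, then use a monotonicity-in-$r$ argument to make the threshold sharp, with $\hat r$ defined as a supremum of the ``good'' set of $r$ --- but the ingredients are obtained by genuinely different means. The paper never proves your key lemma (strict monotonicity of the optimizer $\lambda^*(p_t;r)$ in $r$ via decreasing differences); it works instead at the level of marginal quantities: under the tax, the marginal cost \eqref{eqn:marginal_cost_tax} shifts by exactly $p_t$ and is independent of $r$, while the only $r$-dependent piece of the marginal revenue \eqref{eqn:marginal_revenue_tax} is the waiting-time term $\widetilde{R}_m$, which vanishes as $r\to 0$ (the tax buys unboundedly many chargers) and converges to its untaxed value as $r\to\infty$. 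Comparing $\Delta C_m=p_t$ with $\Delta R_m$ at frozen anchor points ($(\lambda^*(0),K^*(0))$ for small $r$, $(\lambda(0,K^*(p_t)),K^*(p_t))$ for large $r$) yields both limits, and the fact that $R_m$ is decreasing in $r$ at fixed $(\lambda,p_t,K)$ supplies the contradiction that rules out equality for $r>\hat r$ --- exactly the role your strict-monotonicity lemma plays, but requiring strictly less. Your $r\to 0^+$ step is also different: an envelope computation giving $d\lambda^*/dp_t\big|_{0^+}=c_1+c_2/r$, which needs smoothness and interiority of $K^*(0)$ that the paper's finite comparison avoids. Two caveats you should address: (a) your case (i) ignores the profitability constraint \eqref{eqn:tax_constraint_profitability}; since only $\Pi(0,K^*(0))\ge 0$ is assumed, the paper explicitly checks feasibility by restricting $p_t\le\beta t_w$, and your derivative argument needs an analogous check; (b) the comparative-statics lemma you flag as the main obstacle is stronger than necessary --- both your inference and the paper's (``a marginal inequality at a point moves the optimizer past that point'') rest on the same unstated single-crossing/concavity regularity, so following the paper's marginal-analysis route you would only need monotonicity of $\widetilde{R}_m$ in $r$, which is immediate from \eqref{eqn:tax_constraint_M} and \eqref{eqn:tax_constraint_waiting_time}.
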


It says that when the installation cost of each charger is lower than the threshold, i.e., $r < \hat{r}$, the regulatory agency can always encourage the platform to serve more customers by deciding an appropriate tax rate.\footnote{This does not necessarily mean that the taxation scheme will benefit the platform in terms of its profit. Instead, it means that the increase of charging facilities can partly relieve the tax penalty imposed on the platform. In other words, the platform can mitigate the tax burden by serving more customers.} On the other hand, if the cost is too high, i.e., $r > \hat{r}$, the tax revenue cannot assume adequate chargers to accommodate the market expansion. In this case, the tax burden will always reduce the EV adoption. The condition $\Pi(0, K^*(0)) \geq 0$ guarantees that the profit is positive without the tax, thereby the platform will provide valet charging services, i.e., $\lambda^*(0) > 0$. The proof can be found in \ref{appendix:proof_prop_effect_of_cost}.

\begin{remark}
    To capture the potential regulatory risk arising from the taxation scheme, we impose a profitability constraint \eqref{eqn:tax_constraint_profitability} in the profit-maximizing problem, which rules out the case of over-taxation. In practice, the regulator decides the tax rate to maximize the EV penetration, social welfare, etc. The desired tax rate may lie on the boundary where the platform receives a zero net profit. In this case, the regulator compromises the platform's profit to achieve its target. 
\end{remark}

\subsection{Numerical Example} \label{section:analysis_tax}

This subsection presents a numerical example that solves the platform's decision-making problem \eqref{eqn:tax_profit_maximization} under the proposed taxation scheme, and traces the market outcomes as we perturb the tax rate $p_t$. The model parameters are consistent with those in \autoref{section:numerical_valet}. Besides, we set $r = \HKD 25$. Numerical results are presented in \autoref{fig:tax_demand}-\ref{fig:tax_lerner_index}. Key findings are summarized as below.
\begin{figure*}[t]
    \begin{minipage}[b]{0.32\linewidth}
        \centering
%
%
\definecolor{mycolor1}{rgb}{0.85000,0.32500,0.09800}%
\begin{tikzpicture}

\begin{axis}[%
width=1.694in,
height=1.03in,
at={(0.573in,0.457in)},
scale only axis,
xmin=0,
xmax=18,
xtick={0, 6, 12, 18},
xticklabels={0, 6, 12, 18},
xlabel style={font=\color{white!15!black}},
xlabel={Tax Rate $p_t$},
ymin=490,
ymax=515,
scaled y ticks=base 10:-2,
ytick={ 490, 500, 510},
yticklabels={ 4.9, 5.0, 5.1},
ylabel style={font=\color{white!15!black}},
ylabel={Valet Demand},
axis background/.style={fill=white}
]
\addplot [color=black, line width=1.0pt, forget plot]
  table[row sep=crcr]{%
0	496.228191193006\\
0.2	496.542591869114\\
0.4	496.854388880499\\
0.6	497.163534436287\\
0.8	497.469977465129\\
1	497.773670156189\\
1.2	498.07456091363\\
1.4	498.372597058388\\
1.6	498.667722523766\\
1.8	498.959888760477\\
2	499.249042807393\\
2.2	499.535124406072\\
2.4	499.818079625141\\
2.6	500.097850525907\\
2.8	500.374382342701\\
3	500.647587537861\\
3.2	500.917488615321\\
3.4	501.183944835591\\
3.6	501.446917198292\\
3.8	501.706338294366\\
4	501.96217155798\\
4.2	502.214323919227\\
4.4	502.46274150822\\
4.6	502.707356756402\\
4.8	502.948103146899\\
5	503.184912074933\\
5.2	503.417715891725\\
5.4	503.646437397008\\
5.6	503.8710144389\\
5.8	504.091366092423\\
6	504.307424997293\\
6.2	504.519111505164\\
6.4	504.726340109389\\
6.6	504.929078545541\\
6.8	505.127194880863\\
7	505.320633099553\\
7.2	505.509309455476\\
7.4	505.693135893339\\
7.6	505.872040103225\\
7.8	506.045931410205\\
8	506.214725978474\\
8.2	506.378326540046\\
8.4	506.536666615408\\
8.6	506.689637221082\\
8.8	506.837155611867\\
9	506.97912591149\\
9.2	507.115450034583\\
9.4	507.246040637557\\
9.6	507.37079797725\\
9.8	507.48961910867\\
10	507.60241052069\\
10.2	507.709066937118\\
10.4	507.809486392061\\
10.6	507.90356069061\\
10.8	507.991188694241\\
11	508.072259356305\\
11.2	508.146667659931\\
11.4	508.214293943532\\
11.6	508.275031672956\\
11.8	508.328768749602\\
12	508.375382763363\\
12.2	508.414756196331\\
12.4	508.446771766549\\
12.6	508.471306825311\\
12.8	508.488236042594\\
13	508.497438681219\\
13.2	508.498780624368\\
13.4	508.492138490064\\
13.6	508.477378311686\\
13.8	508.454369937636\\
14	508.422974144435\\
14.2	508.383053298052\\
14.4	508.3344766781\\
14.6	508.277092047263\\
14.8	508.210763202159\\
15	508.135341269802\\
15.2	508.05068407647\\
15.4	507.956636316251\\
15.6	507.85304355318\\
15.8	507.739757070346\\
16	507.616621899297\\
16.2	507.483471297698\\
16.4	507.340154630903\\
16.6	507.186502088265\\
16.8	507.022347817936\\
17	506.847527540292\\
17.2	506.661864154093\\
17.4	506.465194026242\\
17.6	506.257334405327\\
17.8	506.038108429874\\
18	505.807338408748\\
18.2	505.564840288497\\
18.4	505.310428456448\\
18.6	505.043916915273\\
18.8	504.765111837645\\
19	504.473822006988\\
19.2	504.169849618631\\
19.4	503.853000317125\\
19.6	503.523069126591\\
19.8	503.179850352561\\
20	502.823146656562\\
20.2	502.45273845436\\
20.4	502.068415460782\\
20.6	501.669969599004\\
20.8	501.257176942313\\
21	500.829819112661\\
21.2	500.387669534519\\
21.4	499.930510751489\\
21.6	499.4581051606\\
21.8	498.970226151361\\
22	498.466635092297\\
22.2	497.947097278331\\
22.4	497.411373948244\\
22.6	496.859221105407\\
22.8	496.290386426999\\
23	495.704631380565\\
23.2	495.101694391717\\
23.4	494.481330511478\\
23.6	493.843275035708\\
23.8	493.18727437481\\
24	492.513059552472\\
24.2	491.820365686939\\
24.4	491.108926167444\\
24.6	490.378467023872\\
24.8	489.628721862625\\
25	488.85940888819\\
};
\addplot [color=mycolor1, dashed, line width=1.0pt, forget plot]
  table[row sep=crcr]{%
0	508.498780624368\\
13.2	508.498780624368\\
};
\addplot [color=mycolor1, dashed, line width=1.0pt, forget plot]
  table[row sep=crcr]{%
13.2	0\\
13.2	508.498780624368\\
};
\addplot [color=mycolor1, only marks, mark size=2.0pt, mark=*, mark options={solid, mycolor1}, forget plot]
  table[row sep=crcr]{%
13.2	508.498780624368\\
};
\node[right, align=left]
at (axis cs:9.5,511) {$p^*_{t, \lambda} = 13.2$};
\end{axis}
\end{tikzpicture}%
        \vspace*{-0.3in}
        \caption{Valet charging demand $\lambda$ as a function of $p_t$.}
        \label{fig:tax_demand}
    \end{minipage}
    \begin{minipage}[b]{0.005\linewidth}
        \hfill
    \end{minipage}
    \begin{minipage}[b]{0.32\linewidth}
        \centering
%
%
\begin{tikzpicture}

\begin{axis}[%
width=1.694in,
height=1.03in,
at={(0.497in,0.457in)},
scale only axis,
xmin=0,
xmax=18,
xtick={0, 6, 12, 18},
xticklabels={0, 6, 12, 18},
xlabel style={font=\color{white!15!black}},
xlabel={Tax Rate $p_t$},
ymin=400,
ymax=480,
scaled y ticks=base 10:-2,
ytick={400, 440, 480},
yticklabels={4.0, 4.4, 4.8},
ylabel style={font=\color{white!15!black}},
ylabel={Courier Supply},
axis background/.style={fill=white}
]
\addplot [color=black, line width=1.0pt, forget plot]
  table[row sep=crcr]{%
0	471.651770218696\\
0.2	471.104716828228\\
0.4	470.540929723336\\
0.6	469.981944107257\\
0.8	469.406725654076\\
1	468.836413404949\\
1.2	468.270861403185\\
1.4	467.689614296307\\
1.6	467.133335666656\\
1.8	466.541516059281\\
2	465.954635304775\\
2.2	465.372535493985\\
2.4	464.775693367441\\
2.6	464.164515905918\\
2.8	463.596411654066\\
3	462.957183500973\\
3.2	462.360746618064\\
3.4	461.768888471121\\
3.6	461.14490696213\\
3.8	460.544013116874\\
4	459.911764640675\\
4.2	459.302272200912\\
4.4	458.679717944767\\
4.6	458.061851887996\\
4.8	457.396846195005\\
5	456.788373444848\\
5.2	456.133458365161\\
5.4	455.483581758508\\
5.6	454.838575539344\\
5.8	454.181889249278\\
6	453.546248750002\\
6.2	452.8668525846\\
6.4	452.224322116287\\
6.6	451.539030582668\\
6.8	450.889902443371\\
7	450.198930159778\\
7.2	449.528225080147\\
7.4	448.84700792098\\
7.6	448.140698256789\\
7.8	447.48368970002\\
8	446.786939563541\\
8.2	446.0516659702\\
8.4	445.35037478623\\
8.6	444.65371416402\\
8.8	443.947537540633\\
9	443.245901252228\\
9.2	442.507663434814\\
9.4	441.774459608889\\
9.6	441.086106871103\\
9.8	440.335493158789\\
10	439.589873729802\\
10.2	438.86192225581\\
10.4	438.112728461879\\
10.6	437.368151636261\\
10.8	436.590591615535\\
11	435.84270682628\\
11.2	435.062534101093\\
11.4	434.299249621091\\
11.6	433.564115850601\\
11.8	432.77374894836\\
12	431.964863676737\\
12.2	431.172488970959\\
12.4	430.384623203873\\
12.6	429.612240441006\\
12.8	428.777157737924\\
13	427.969084162135\\
13.2	427.176149251485\\
13.4	426.344328539256\\
13.6	425.506582949761\\
13.8	424.694286886728\\
14	423.854973014049\\
14.2	423.009847999852\\
14.4	422.16907188843\\
14.6	421.312557613076\\
14.8	420.450571533082\\
15	419.602528938002\\
15.2	418.739185497182\\
15.4	417.870411761482\\
15.6	416.977707419509\\
15.8	416.107738341719\\
16	415.205186762875\\
16.2	414.324795767189\\
16.4	413.40384712054\\
16.6	412.496007885486\\
16.8	411.583456071858\\
17	410.674718580416\\
17.2	409.752772061098\\
17.4	408.801510241369\\
17.6	407.870764529617\\
17.8	406.92746597503\\
18	405.979882793462\\
18.2	405.035833623434\\
18.4	404.064207788141\\
18.6	403.073574332207\\
18.8	402.109456010531\\
19	401.126310170999\\
19.2	400.14657007056\\
19.4	399.126881925\\
19.6	398.132369975368\\
19.8	397.113011793299\\
20	396.097139407361\\
20.2	395.070872654442\\
20.4	394.0477892805\\
20.6	393.001272134799\\
20.8	391.958014696593\\
21	390.904909877513\\
21.2	389.854726513939\\
21.4	388.769992123632\\
21.6	387.700761509576\\
21.8	386.610108373514\\
22	385.510576453872\\
22.2	384.425594127879\\
22.4	383.314159713598\\
22.6	382.194141540405\\
22.8	381.065675645099\\
23	379.950742914637\\
23.2	378.805426128347\\
23.4	377.662618693407\\
23.6	376.501093354464\\
23.8	375.341940556638\\
24	374.174671752831\\
24.2	372.994415701581\\
24.4	371.811244026867\\
24.6	370.591424716099\\
24.8	369.393014384064\\
25	368.177556436604\\
};
\end{axis}
\end{tikzpicture}%
        \vspace*{-0.3in}
        \caption{Courier supply $N$ as a function of $p_t$.}
        \label{fig:tax_supply}
    \end{minipage}
    \begin{minipage}[b]{0.005\linewidth}
        \hfill
    \end{minipage}
    \begin{minipage}[b]{0.32\linewidth}
        \centering
%
%
\begin{tikzpicture}

\begin{axis}[%
width=1.694in,
height=1.03in,
at={(0.429in,0.457in)},
scale only axis,
xmin=0,
xmax=18,
xtick={0, 6, 12, 18},
xticklabels={0, 6, 12, 18},
xlabel style={font=\color{white!15!black}},
xlabel={Tax Rate $p_t$},
ymin=0,
ymax=6700,
scaled y ticks=base 10:-3,
ytick={0, 3000, 6000},
yticklabels={0.0, 3.0, 6.0},
ylabel style={font=\color{white!15!black}},
ylabel={Profit (HK\$/hr)},
axis background/.style={fill=white}
]
\addplot [color=black, line width=1.0pt, forget plot]
  table[row sep=crcr]{%
0	6469.64302791507\\
0.2	6418.96206078567\\
0.4	6368.63795288284\\
0.6	6317.6264008602\\
0.8	6266.92745669371\\
1	6215.52891118401\\
1.2	6163.44056577528\\
1.4	6111.61600240899\\
1.6	6058.16172210041\\
1.8	6005.87371837829\\
2	5952.87403389319\\
2.2	5899.1729239514\\
2.4	5845.64873593509\\
2.6	5792.26668110516\\
2.8	5736.4779310407\\
3	5683.322869637\\
3.2	5627.78259669279\\
3.4	5571.54399107374\\
3.6	5516.17729404848\\
3.8	5459.31049799567\\
4	5403.25092742214\\
4.2	5345.71332906103\\
4.4	5288.19687035908\\
4.6	5229.96153811522\\
4.8	5173.10334253731\\
5	5113.42271587769\\
5.2	5055.05872637741\\
5.4	4995.938627932\\
5.6	4936.07449136941\\
5.8	4876.10436562944\\
6	4814.77415657507\\
6.2	4754.53390917771\\
6.4	4692.34488957262\\
6.6	4631.16385906284\\
6.8	4568.08518706401\\
7	4505.93442275484\\
7.2	4442.4712175026\\
7.4	4378.78935802437\\
7.6	4315.37145867918\\
7.8	4249.67591004716\\
8	4184.7281289449\\
8.2	4120.42785923518\\
8.4	4054.38578499127\\
8.6	3987.58161905203\\
8.8	3920.46213500364\\
9	3852.58277389939\\
9.2	3785.18552561166\\
9.4	3716.98350335225\\
9.6	3646.83457344017\\
9.8	3577.84936594339\\
10	3508.05468327483\\
10.2	3437.11505727824\\
10.4	3366.0961818967\\
10.6	3294.28868501367\\
10.8	3222.6551984662\\
11	3149.58556865701\\
11.2	3076.62914354644\\
11.4	3002.57194561167\\
11.6	2927.19062939632\\
11.8	2852.38922175984\\
12	2777.2699832844\\
12.2	2701.06832634337\\
12.4	2624.06083010729\\
12.6	2546.04590005482\\
12.8	2468.51798176751\\
13	2389.73459772564\\
13.2	2309.96104990623\\
13.4	2230.12177007608\\
13.6	2149.61434824963\\
13.8	2067.96098530122\\
14	1985.9634887432\\
14.2	1903.27891802507\\
14.4	1819.76998982741\\
14.6	1735.68447885065\\
14.8	1650.87595399628\\
15	1565.14197978081\\
15.2	1478.78757623822\\
15.4	1391.69685542372\\
15.6	1303.99432477761\\
15.8	1215.32433291472\\
16	1126.05360558923\\
16.2	1035.85303746345\\
16.4	945.009630816481\\
16.6	853.278120973288\\
16.8	760.730053180656\\
17	667.354619653638\\
17.2	573.1609658163\\
17.4	478.08866664097\\
17.6	382.189075036257\\
17.8	285.409810782284\\
18	187.759715802168\\
18.2	89.2817734133178\\
18.4	-10.2277133868683\\
18.6	-110.783161479609\\
18.8	-212.000877945672\\
19	-314.270507154082\\
19.2	-417.377068647828\\
19.4	-521.878103293163\\
19.6	-626.953731678253\\
19.8	-733.296762121007\\
20	-840.49786465943\\
20.2	-948.789452544391\\
20.4	-1057.92404469549\\
20.6	-1168.44888708002\\
20.8	-1279.83025106706\\
21	-1392.35911228854\\
21.2	-1505.72673267401\\
21.4	-1620.93891891505\\
21.6	-1736.67034691643\\
21.8	-1853.96858507783\\
22	-1972.5076006892\\
22.2	-2091.50530416175\\
22.4	-2212.3405892246\\
22.6	-2334.45056543825\\
22.8	-2457.84976392554\\
23	-2581.64657410342\\
23.2	-2707.63877565552\\
23.4	-2834.48139512065\\
23.6	-2963.13041823722\\
23.8	-3092.62757466695\\
24	-3223.4622768187\\
24.2	-3355.90893580626\\
24.4	-3489.45485559036\\
24.6	-3626.02475676236\\
24.8	-3762.34684666435\\
25	-3900.62707773651\\
};
\end{axis}

\end{tikzpicture}%
        \vspace*{-0.3in}
        \caption{The platform's profit $\Pi$ as a function of $p_t$.}
        \label{fig:tax_profit}
    \end{minipage}
    
    \begin{minipage}[b]{0.32\linewidth}
        \centering
%
%
\definecolor{mycolor1}{rgb}{0.85000,0.32500,0.09800}%
\begin{tikzpicture}

\begin{axis}[%
width=1.694in,
height=1.03in,
at={(0.497in,0.457in)},
scale only axis,
xmin=0,
xmax=18,
xtick={0, 6, 12, 18},
xticklabels={0, 6, 12, 18},
xlabel style={font=\color{white!15!black}},
xlabel={Tax Rate $p_t$},
ymin=16000,
ymax=19500,
scaled y ticks=base 10:-4,
ytick={16000, 17000, 18000, 19000},
yticklabels={1.6, 1.7, 1.8, 1.9},
ylabel style={font=\color{white!15!black}},
ylabel={Social Welfare},
axis background/.style={fill=white}
]
\addplot [color=black, line width=1.0pt, forget plot]
  table[row sep=crcr]{%
0	16346.0964256434\\
0.2	16392.587290121\\
0.4	16439.3629957041\\
0.6	16485.5945304282\\
0.8	16532.0674218085\\
1	16577.9806690161\\
1.2	16623.3403261748\\
1.4	16668.8911618204\\
1.6	16713.1477421888\\
1.8	16758.2928968761\\
2	16802.8548308114\\
2.2	16846.8396099329\\
2.4	16890.9259568858\\
2.6	16935.0805408629\\
2.8	16977.3313378796\\
3	17021.5629178602\\
3.2	17063.9045049441\\
3.4	17105.6532924754\\
3.6	17148.0064500106\\
3.8	17189.1461470245\\
4	17230.8275461244\\
4.2	17271.3079776421\\
4.4	17311.7220745792\\
4.6	17351.505685074\\
4.8	17392.228431362\\
5	17430.7336393395\\
5.2	17470.1176733804\\
5.4	17508.8236894006\\
5.6	17546.8585875022\\
5.8	17584.6895594487\\
6	17621.3890994156\\
6.2	17658.749373087\\
6.4	17694.5411384821\\
6.6	17730.9143772686\\
6.8	17765.7548839213\\
7	17801.0972554073\\
7.2	17835.3235908922\\
7.4	17869.2126246586\\
7.6	17903.0956084524\\
7.8	17935.1769541273\\
8	17967.5790154097\\
8.2	18000.2089346229\\
8.4	18031.4045910339\\
8.6	18061.8442571284\\
8.8	18091.827124813\\
9	18121.0452871114\\
9.2	18150.3203460114\\
9.4	18178.780342715\\
9.6	18205.6793782785\\
9.8	18233.0418020022\\
10	18259.5674529535\\
10.2	18285.0428886204\\
10.4	18310.1354076402\\
10.6	18334.3904627249\\
10.8	18358.3853759917\\
11	18381.1358128424\\
11.2	18403.5597902411\\
11.4	18424.9327611845\\
11.6	18445.1385252052\\
11.8	18465.2333094838\\
12	18484.6827561687\\
12.2	18503.0686456384\\
12.4	18520.5394251086\\
12.6	18536.9969048713\\
12.8	18553.1394846803\\
13	18568.1221170958\\
13.2	18582.0825093983\\
13.4	18595.3924937018\\
13.6	18607.7740603172\\
13.8	18619.0577741408\\
14	18629.5078713688\\
14.2	18638.9868307716\\
14.4	18647.4514314889\\
14.6	18654.9378117241\\
14.8	18661.3943151978\\
15	18666.8051024493\\
15.2	18671.171125745\\
15.4	18674.4664381085\\
15.6	18676.6192856952\\
15.8	18677.7349287347\\
16	18677.612308897\\
16.2	18676.4639404639\\
16.4	18673.9343044199\\
16.6	18670.3085506824\\
16.8	18665.4677493313\\
17	18659.4762133771\\
17.2	18652.1600995347\\
17.4	18643.2854920569\\
17.6	18633.3943926309\\
17.8	18622.089248983\\
18	18609.450347442\\
18.2	18595.5872513122\\
18.4	18579.9701809268\\
18.6	18562.6585247136\\
18.8	18544.4819279007\\
19	18524.5785672062\\
19.2	18503.3761741234\\
19.4	18479.8667235518\\
19.6	18455.5126564926\\
19.8	18429.1028030344\\
20	18401.3180681204\\
20.2	18371.7712605688\\
20.4	18340.8319710966\\
20.6	18307.6698212356\\
20.8	18273.0720827422\\
21	18236.600157459\\
21.2	18198.6755296949\\
21.4	18157.898891565\\
21.6	18116.0833871081\\
21.8	18071.7928049197\\
22	18025.4625383935\\
22.2	17978.0936570888\\
22.4	17927.8766279566\\
22.6	17875.5366103408\\
22.8	17821.042168763\\
23	17765.4883967449\\
23.2	17706.615994499\\
23.4	17646.0827695322\\
23.6	17582.6993544048\\
23.8	17517.6172106358\\
24	17450.2211911755\\
24.2	17380.1676863774\\
24.4	17308.050476923\\
24.6	17231.5841393327\\
24.8	17154.6045678586\\
25	17074.4968024727\\
};
\addplot [color=mycolor1, dashed, line width=1.0pt, forget plot]
  table[row sep=crcr]{%
0	18677.7349287347\\
15.8	18677.7349287347\\
};
\addplot [color=mycolor1, dashed, line width=1.0pt, forget plot]
  table[row sep=crcr]{%
15.8	0\\
15.8	18677.7349287347\\
};
\addplot [color=mycolor1, only marks, mark size=2.0pt, mark=*, mark options={solid, mycolor1}, forget plot]
  table[row sep=crcr]{%
15.8	18677.7349287347\\
};
\node[right, align=left]
at (axis cs:8,19077.735) {$p^*_{t, SW} = 15.8$};
\end{axis}
\end{tikzpicture}%
        \vspace*{-0.3in}
        \caption{The social welfare as a function of $p_t$.}
        \label{fig:tax_social_welfare}
    \end{minipage}
    \begin{minipage}[b]{0.005\linewidth}
        \hfill
    \end{minipage}
    \begin{minipage}[b]{0.32\linewidth}
        \centering
%
%
\begin{tikzpicture}

\begin{axis}[%
width=1.694in,
height=1.03in,
at={(0.573in,0.457in)},
scale only axis,
xmin=0,
xmax=18,
xtick={0, 6, 12, 18},
xticklabels={0, 6, 12, 18},
xlabel style={font=\color{white!15!black}},
xlabel={Tax Rate $p_t$},
ymin=0.75,
ymax=0.83,
scaled y ticks=base 10:0,
ytick={0.75, 0.79, 0.83},
yticklabels={75\%, 79\%, 83\%},
ylabel style={font=\color{white!15!black}},
ylabel={Occupancy},
axis background/.style={fill=white}
]
\addplot [color=black, line width=1.0pt, forget plot]
  table[row sep=crcr]{%
0	0.827046985321677\\
0.2	0.826476637510633\\
0.4	0.825902098361596\\
0.6	0.825323325434937\\
0.8	0.824740271516167\\
1	0.824152895439876\\
1.2	0.823561150455008\\
1.4	0.82296498872126\\
1.6	0.822364357565806\\
1.8	0.821759217297183\\
2	0.821149521673725\\
2.2	0.820535213320725\\
2.4	0.819916245877593\\
2.6	0.819292567213579\\
2.8	0.818664131038362\\
3	0.818030839248384\\
3.2	0.817392770777004\\
3.4	0.816749743805332\\
3.6	0.816101740310801\\
3.8	0.815448697629444\\
4	0.814790602713794\\
4.2	0.814127353759765\\
4.4	0.813458911875732\\
4.6	0.812785217291363\\
4.8	0.812106212707134\\
5	0.81142183836532\\
5.2	0.810732036820638\\
5.4	0.810036736603343\\
5.6	0.809335890484855\\
5.8	0.808629422944019\\
6	0.807917280334966\\
6.2	0.807199390896395\\
6.4	0.806475674838447\\
6.6	0.805746134746941\\
6.8	0.805010621492506\\
7	0.804269103079854\\
7.2	0.803521506002461\\
7.4	0.802767751194659\\
7.6	0.802007784348602\\
7.8	0.801241524285737\\
8	0.800468901117866\\
8.2	0.79968982576766\\
8.4	0.798904256216788\\
8.6	0.798112087790733\\
8.8	0.797313256035487\\
9	0.796507677894023\\
9.2	0.79569526807831\\
9.4	0.794875958178717\\
9.6	0.79404966243806\\
9.8	0.793216291454626\\
10	0.792375770701638\\
10.2	0.79152800942055\\
10.4	0.790672922754701\\
10.6	0.789810417831499\\
10.8	0.788940413088526\\
11	0.788062813638632\\
11.2	0.787177534384083\\
11.4	0.786284470196545\\
11.6	0.785383535952334\\
11.8	0.784474639566358\\
12	0.783557675309207\\
12.2	0.782632545590376\\
12.4	0.781699154247879\\
12.6	0.780757398599263\\
12.8	0.779807173306924\\
13	0.778848380644392\\
13.2	0.777880905160173\\
13.4	0.776904647666012\\
13.6	0.775919495241966\\
13.8	0.774925341467098\\
14	0.773922068415188\\
14.2	0.772909561573171\\
14.4	0.771887717155928\\
14.6	0.770856402729756\\
14.8	0.769815509247625\\
15	0.768764911334254\\
15.2	0.767704494008589\\
15.4	0.766634125442626\\
15.6	0.765553676242789\\
15.8	0.764463026002979\\
16	0.763362046940614\\
16.2	0.762250596495208\\
16.4	0.761128555140959\\
16.6	0.75999577856288\\
16.8	0.758852129027451\\
17	0.757697472087567\\
17.2	0.756531657270845\\
17.4	0.755354553334545\\
17.6	0.754166004632876\\
17.8	0.752965864600965\\
18	0.751753986722323\\
18.2	0.750530217189984\\
18.4	0.749294401303263\\
18.6	0.748046384999473\\
18.8	0.74678600504323\\
19	0.745513102784433\\
19.2	0.744227512052347\\
19.4	0.742929072304458\\
19.6	0.741617610220993\\
19.8	0.740292952674914\\
20	0.73895493845639\\
20.2	0.737603378539937\\
20.4	0.736238097052083\\
20.6	0.734858921963133\\
20.8	0.733465661809538\\
21	0.732058133294039\\
21.2	0.730636143573514\\
21.4	0.729199512422349\\
21.6	0.727748034776347\\
21.8	0.726281520530172\\
22	0.724799765140542\\
22.2	0.72330256991487\\
22.4	0.72178973181546\\
22.6	0.720261041964377\\
22.8	0.718716281758575\\
23	0.717155250684727\\
23.2	0.715577719917841\\
23.4	0.713983482159389\\
23.6	0.7123723059442\\
23.8	0.710743974276064\\
24	0.709098251461867\\
24.2	0.707434907403726\\
24.4	0.705753710258726\\
24.6	0.704054419418031\\
24.8	0.702336803950312\\
25	0.700600614118723\\
};
\end{axis}
\end{tikzpicture}%
        \vspace*{-0.3in}
        \caption{Charger occupancy $\rho_c$ as a function of $p_t$.}
        \label{fig:tax_occupancy}
    \end{minipage}
    \begin{minipage}[b]{0.005\linewidth}
        \hfill
    \end{minipage}
    \begin{minipage}[b]{0.32\linewidth}
        \centering
%
%
\begin{tikzpicture}

\begin{axis}[%
width=1.694in,
height=1.03in,
at={(0.531in,0.457in)},
scale only axis,
xmin=0,
xmax=18,
xtick={0, 6, 12, 18},
xticklabels={0, 6, 12, 18},
xlabel style={font=\color{white!15!black}},
xlabel={Tax Rate $p_t$},
ymin=55,
ymax=120,
scaled y ticks=base 10:0,
ytick={60, 90, 120},
yticklabels={60, 90, 120},
ylabel style={font=\color{white!15!black}},
ylabel={Optimal Density},
axis background/.style={fill=white}
]
\addplot [color=black, line width=1.0pt, forget plot]
  table[row sep=crcr]{%
0	57.31\\
0.2	57.68\\
0.4	58.06\\
0.6	58.44\\
0.8	58.83\\
1	59.22\\
1.2	59.61\\
1.4	60.01\\
1.6	60.4\\
1.8	60.81\\
2	61.22\\
2.2	61.63\\
2.4	62.05\\
2.6	62.48\\
2.8	62.89\\
3	63.34\\
3.2	63.77\\
3.4	64.2\\
3.6	64.65\\
3.8	65.09\\
4	65.55\\
4.2	66\\
4.4	66.46\\
4.6	66.92\\
4.8	67.41\\
5	67.87\\
5.2	68.36\\
5.4	68.85\\
5.6	69.34\\
5.8	69.84\\
6	70.33\\
6.2	70.85\\
6.4	71.35\\
6.6	71.88\\
6.8	72.39\\
7	72.93\\
7.2	73.46\\
7.4	74\\
7.6	74.56\\
7.8	75.09\\
8	75.65\\
8.2	76.24\\
8.4	76.81\\
8.6	77.38\\
8.8	77.96\\
9	78.54\\
9.2	79.15\\
9.4	79.76\\
9.6	80.34\\
9.8	80.97\\
10	81.6\\
10.2	82.22\\
10.4	82.86\\
10.6	83.5\\
10.8	84.17\\
11	84.82\\
11.2	85.5\\
11.4	86.17\\
11.6	86.82\\
11.8	87.52\\
12	88.24\\
12.2	88.95\\
12.4	89.66\\
12.6	90.36\\
12.8	91.12\\
13	91.86\\
13.2	92.59\\
13.4	93.36\\
13.6	94.14\\
13.8	94.9\\
14	95.69\\
14.2	96.49\\
14.4	97.29\\
14.6	98.11\\
14.8	98.94\\
15	99.76\\
15.2	100.6\\
15.4	101.45\\
15.6	102.33\\
15.8	103.19\\
16	104.09\\
16.2	104.97\\
16.4	105.9\\
16.6	106.82\\
16.8	107.75\\
17	108.68\\
17.2	109.63\\
17.4	110.62\\
17.6	111.59\\
17.8	112.58\\
18	113.58\\
18.2	114.58\\
18.4	115.62\\
18.6	116.69\\
18.8	117.73\\
19	118.8\\
19.2	119.87\\
19.4	121\\
19.6	122.1\\
19.8	123.24\\
20	124.38\\
20.2	125.54\\
20.4	126.7\\
20.6	127.9\\
20.8	129.1\\
21	130.32\\
21.2	131.54\\
21.4	132.82\\
21.6	134.08\\
21.8	135.38\\
22	136.7\\
22.2	138\\
22.4	139.35\\
22.6	140.72\\
22.8	142.11\\
23	143.48\\
23.2	144.91\\
23.4	146.34\\
23.6	147.81\\
23.8	149.28\\
24	150.77\\
24.2	152.29\\
24.4	153.82\\
24.6	155.43\\
24.8	157\\
25	158.61\\
};
\end{axis}

\end{tikzpicture}%
        \vspace*{-0.3in}
        \caption{Optimal density $K^*_{\lambda}$ as a function of $p_t$.}
        \label{fig:tax_optimal_K}
    \end{minipage}
    
    \begin{minipage}[b]{0.32\linewidth}
        \centering
%
%
\begin{tikzpicture}

\begin{axis}[%
width=1.694in,
height=1.03in,
at={(0.573in,0.457in)},
scale only axis,
xmin=0,
xmax=18,
xtick={0, 6, 12, 18},
xticklabels={0, 6, 12, 18},
xlabel style={font=\color{white!15!black}},
xlabel={Tax Rate $p_t$},
ymin=80,
ymax=82,
scaled y ticks=base 10:0,
ytick={80, 81, 82},
yticklabels={80, 81, 82},
ylabel style={font=\color{white!15!black}},
ylabel={Price (HK\$)},
axis background/.style={fill=white}
]
\addplot [color=black, line width=1.0pt, forget plot]
  table[row sep=crcr]{%
0	80.2835958575708\\
0.2	80.2943728935851\\
0.4	80.3052705280025\\
0.6	80.3161716318961\\
0.8	80.3271893485845\\
1	80.3382114600979\\
1.2	80.3492408233357\\
1.4	80.3603840795807\\
1.6	80.3714342596567\\
1.8	80.3826985369968\\
2	80.3939729380198\\
2.2	80.4052604244884\\
2.4	80.4166560749235\\
2.6	80.4281565236627\\
2.8	80.4394953165006\\
3	80.4512030492436\\
3.2	80.4627525765572\\
3.4	80.4743271256743\\
3.6	80.4860860428454\\
3.8	80.4977900938959\\
4	80.5096717969887\\
4.2	80.5215050475474\\
4.4	80.5334409222959\\
4.6	80.5454073963966\\
4.8	80.5576086622576\\
5	80.569641351715\\
5.2	80.58190364474\\
5.4	80.5941990137365\\
5.6	80.6065312365063\\
5.8	80.6189609082893\\
6	80.6313751942694\\
6.2	80.6439981032311\\
6.4	80.6565572092417\\
6.6	80.6693168253425\\
6.8	80.6820244693453\\
7	80.6949268022623\\
7.2	80.7078322984896\\
7.4	80.7208363305645\\
7.6	80.7339799454457\\
7.8	80.7470565740215\\
8	80.7603152719097\\
8.2	80.7737455135494\\
8.4	80.7871600830813\\
8.6	80.8006412346452\\
8.8	80.8142238275028\\
9	80.8278777352275\\
9.2	80.8416940944565\\
9.4	80.8555797199988\\
9.6	80.8694617935055\\
9.8	80.8835547088983\\
10	80.8977244409517\\
10.2	80.9119537678309\\
10.4	80.9263131010164\\
10.6	80.9407592120279\\
10.8	80.9553508588784\\
11	80.9699978893599\\
11.2	80.9847883054031\\
11.4	80.9996595294664\\
11.6	81.0146078875911\\
11.8	81.0297242874571\\
12	81.0449646050769\\
12.2	81.0602996196518\\
12.4	81.0757453117828\\
12.6	81.0913002178563\\
12.8	81.107009796691\\
13	81.1228247554141\\
13.2	81.1387597590439\\
13.4	81.1548332736746\\
13.6	81.1710343779475\\
13.8	81.1873652544868\\
14	81.2038318095089\\
14.2	81.2204334231279\\
14.4	81.2371760350294\\
14.6	81.2540567585035\\
14.8	81.2710794198025\\
15	81.2882595599846\\
15.2	81.3055834050833\\
15.4	81.3230591877839\\
15.6	81.3406718618435\\
15.8	81.3584676381548\\
16	81.3763934706062\\
16.2	81.3945143686268\\
16.4	81.4127532673698\\
16.6	81.4311836455423\\
16.8	81.4497852069919\\
17	81.468577074401\\
17.2	81.4875332866336\\
17.4	81.5066221912395\\
17.6	81.5259442921057\\
17.8	81.5454381069314\\
18	81.5651245510566\\
18.2	81.5850271120472\\
18.4	81.6050711412019\\
18.6	81.6252757406286\\
18.8	81.6457705593636\\
19	81.6664351664697\\
19.2	81.6873385509978\\
19.4	81.7083468250461\\
19.6	81.7296721166603\\
19.8	81.7511545025894\\
20	81.7728937913935\\
20.2	81.7948462778469\\
20.4	81.8170678748968\\
20.6	81.8394584856409\\
20.8	81.8621282416339\\
21	81.8850288128962\\
21.2	81.9082209432581\\
21.4	81.9315388414076\\
21.6	81.9552163919108\\
21.8	81.9790849798369\\
22	82.003207384238\\
22.2	82.0277113413919\\
22.4	82.0523897755178\\
22.6	82.0773386169827\\
22.8	82.1025635565972\\
23	82.1281992492189\\
23.2	82.1539952122289\\
23.4	82.1801509954933\\
23.6	82.2065415806664\\
23.8	82.2333058391048\\
24	82.2603844204538\\
24.2	82.2877496093039\\
24.4	82.3154767035707\\
24.6	82.3433294499846\\
24.8	82.371731402539\\
25	82.4004119775315\\
};
\end{axis}
\end{tikzpicture}%
        \vspace*{-0.3in}
        \caption{Valet charging price $p_v$ as a function of $p_t$.}
        \label{fig:tax_price}
    \end{minipage}
    \begin{minipage}[b]{0.005\linewidth}
        \hfill
    \end{minipage}
    \begin{minipage}[b]{0.32\linewidth}
        \centering
%
%
\begin{tikzpicture}

\begin{axis}[%
width=1.694in,
height=1.03in,
at={(0.454in,0.457in)},
scale only axis,
xmin=0,
xmax=18,
xtick={0, 6, 12, 18},
xticklabels={0, 6, 12, 18},
xlabel style={font=\color{white!15!black}},
xlabel={Tax Rate $p_t$},
ymin=58,
ymax=66,
scaled y ticks=base 10:0,
ytick={58, 62, 66},
yticklabels={58, 62, 66},
ylabel style={font=\color{white!15!black}},
ylabel={Marginal Cost},
axis background/.style={fill=white}
]
\addplot [color=black, line width=1.0pt, forget plot]
  table[row sep=crcr]{%
0	58.8077947801122\\
0.2	58.8753565915652\\
0.4	58.9406836474195\\
0.6	59.0073499145905\\
0.8	59.0718676356371\\
1	59.1377505462164\\
1.2	59.2049618679206\\
1.4	59.2701416877588\\
1.6	59.3399924069007\\
1.8	59.4045579119608\\
2	59.4705021760117\\
2.2	59.5378049627154\\
2.4	59.6032623917224\\
2.6	59.6669595813229\\
2.8	59.7383013185323\\
3	59.7985846500743\\
3.2	59.8664682753965\\
3.4	59.9356965500754\\
3.6	60.0002874076732\\
3.8	60.0692575930486\\
4	60.1337238050771\\
4.2	60.2025224258286\\
4.4	60.2698151836597\\
4.6	60.3385036495218\\
4.8	60.4001578227679\\
5	60.4716498567351\\
5.2	60.5362354700455\\
5.4	60.6023015903925\\
5.6	60.6698129386131\\
5.8	60.7360952232685\\
6	60.8064539029111\\
6.2	60.8704027379044\\
6.4	60.9409990399996\\
6.6	61.0053632653173\\
6.8	61.0762717378535\\
7	61.141108995526\\
7.2	61.2099332197838\\
7.4	61.277769154132\\
7.6	61.3422740687072\\
7.8	61.4154457749996\\
8	61.4829454724957\\
8.2	61.5449807250684\\
8.4	61.613235245807\\
8.6	61.6829713786924\\
8.8	61.7519459658586\\
9	61.8224029130513\\
9.2	61.887758564449\\
9.4	61.9546952718984\\
9.6	62.0296086741984\\
9.8	62.0953306832183\\
10	62.1626527835419\\
10.2	62.2336117313791\\
10.4	62.3019773008881\\
10.6	62.3719015045486\\
10.8	62.4373824830475\\
11	62.5084435305376\\
11.2	62.5751820714392\\
11.4	62.6454758213358\\
11.6	62.721115442677\\
11.8	62.7888092070911\\
12	62.8544285626759\\
12.2	62.9235593928904\\
12.4	62.9943061825267\\
12.6	63.068419805463\\
12.8	63.1334464239154\\
13	63.2036987748818\\
13.2	63.2772886189972\\
13.4	63.345619691215\\
13.6	63.4139515482275\\
13.8	63.4872930943154\\
14	63.5572988674797\\
14.2	63.6273479789655\\
14.4	63.6990765836628\\
14.6	63.7692923167796\\
14.8	63.8396371099967\\
15	63.9132136329025\\
15.2	63.9853811014161\\
15.4	64.0577164284447\\
15.6	64.1272965658715\\
15.8	64.2015377222454\\
16	64.2716666051997\\
16.2	64.34638512535\\
16.4	64.4157574847014\\
16.6	64.4883014024079\\
16.8	64.5612055822083\\
17	64.6358395965095\\
17.2	64.7095035658338\\
17.4	64.7796550458686\\
17.6	64.8542137290214\\
17.8	64.9279501550987\\
18	65.0021830186166\\
18.2	65.0781712096585\\
18.4	65.1509750265905\\
18.6	65.2219797679923\\
18.8	65.2984231338757\\
19	65.373087154234\\
19.2	65.4495493389855\\
19.4	65.5209263024018\\
19.6	65.5975829992838\\
19.8	65.6715867330508\\
20	65.7474570744056\\
20.2	65.8229942901066\\
20.4	65.9003718612694\\
20.6	65.9753764239177\\
20.8	66.0522601244658\\
21	66.1289619747319\\
21.2	66.207530713564\\
21.4	66.2819883462024\\
21.6	66.3603515486786\\
21.8	66.4367397857712\\
22	66.5131563883997\\
22.2	66.5933887791337\\
22.4	66.6708762989796\\
22.6	66.7484982833467\\
22.8	66.8262886588421\\
23	66.9077939312213\\
23.2	66.9859822101409\\
23.4	67.0661546862389\\
23.6	67.1449145192403\\
23.8	67.2256603113495\\
24	67.3067343771501\\
24.2	67.3873699306243\\
24.4	67.4692038722279\\
24.6	67.5468063750351\\
24.8	67.6295722029696\\
25	67.7113031350359\\
};
\end{axis}
\end{tikzpicture}%
        \vspace*{-0.3in}
        \caption{The marginal cost $C_m$ as a function of $p_t$.}
        \label{fig:tax_marginal_cost}
    \end{minipage}
    \begin{minipage}[b]{0.005\linewidth}
        \hfill
    \end{minipage}
    \begin{minipage}[b]{0.32\linewidth}
        \centering
%
%
\begin{tikzpicture}

\begin{axis}[%
width=1.694in,
height=1.03in,
at={(0.573in,0.457in)},
scale only axis,
xmin=0,
xmax=18,
xtick={0, 6, 12, 18},
xticklabels={0, 6, 12, 18},
xlabel style={font=\color{white!15!black}},
xlabel={Tax Rate $p_t$},
ymin=0.20,
ymax=0.27,
scaled y ticks=base 10:0,
ytick={0.20, 0.23, 0.26},
yticklabels={0.20, 0.23, 0.26},
ylabel style={font=\color{white!15!black}},
ylabel={Lerner Index},
axis background/.style={fill=white}
]
\addplot [color=black, line width=1.0pt, forget plot]
  table[row sep=crcr]{%
0	0.267499242504762\\
0.2	0.266756131596005\\
0.4	0.266042150659752\\
0.6	0.265311720969071\\
0.8	0.264609304586878\\
1	0.263890128104374\\
1.2	0.263154682468066\\
1.4	0.262445763959219\\
1.6	0.261678070653928\\
1.8	0.260978307606588\\
2	0.260261683772479\\
2.2	0.259528485469809\\
2.4	0.258819437403731\\
2.6	0.258133442810313\\
2.8	0.257351117340015\\
3	0.256709876501512\\
3.2	0.255972902263866\\
3.4	0.255219662085827\\
3.6	0.254525963956887\\
3.8	0.253777556837507\\
4	0.253086958835096\\
4.2	0.252342310414101\\
4.4	0.251617533121277\\
4.6	0.250875927008829\\
4.8	0.250224046793656\\
5	0.249448690074777\\
5.2	0.248761412525937\\
5.4	0.248056282809344\\
5.6	0.247333782908945\\
5.8	0.246627660056785\\
6	0.245871055077419\\
6.2	0.245196119121163\\
6.4	0.244438379859128\\
6.6	0.243760011041121\\
6.8	0.243000257621707\\
7	0.242317808338207\\
7.2	0.241586206981682\\
7.4	0.240868009553445\\
7.6	0.240192616415567\\
7.8	0.239409479666921\\
8	0.238698545622433\\
8.2	0.238057114551602\\
8.4	0.237338765436932\\
8.6	0.236602947251806\\
8.8	0.235877757142511\\
9	0.235135145876693\\
9.2	0.234457426236782\\
9.4	0.233761040531201\\
9.6	0.232966223608775\\
9.8	0.232287318396171\\
10	0.231589600163411\\
10.2	0.230847744574893\\
10.4	0.230139432855175\\
10.6	0.229412942110381\\
10.8	0.228742982142236\\
11	0.228004876374689\\
11.2	0.227321780042681\\
11.4	0.226595813053432\\
11.6	0.225804863121679\\
11.8	0.225113873220838\\
12	0.224449922719338\\
12.2	0.223743809384643\\
12.4	0.223019092328077\\
12.6	0.222254179720559\\
12.8	0.221603082370186\\
13	0.220888831651987\\
13.2	0.22013487996476\\
13.4	0.219447355925215\\
13.6	0.218761322506249\\
13.8	0.218015107457785\\
14	0.2173115793775\\
14.2	0.21660910564844\\
14.4	0.215887605987242\\
14.6	0.215186356709434\\
14.8	0.214485182604311\\
15	0.213746068880471\\
15.2	0.21302598884721\\
15.4	0.212305623174745\\
15.6	0.211620766118195\\
15.8	0.210880691512228\\
16	0.210192736934021\\
16.2	0.209450592285221\\
16.4	0.20877559228157\\
16.6	0.208063809030263\\
16.8	0.207349590693996\\
17	0.20661631861471\\
17.2	0.205896890531498\\
17.4	0.205222185580508\\
17.6	0.204496012991274\\
17.8	0.20378194461402\\
18	0.203064013248361\\
18.2	0.202327026008321\\
18.4	0.201630804121729\\
18.6	0.200958536725306\\
18.8	0.200222832285011\\
19	0.199510949376242\\
19.2	0.198779755835416\\
19.4	0.198112202138967\\
19.6	0.197383504663394\\
19.8	0.196689182769025\\
20	0.195974924867763\\
20.2	0.195267216879238\\
20.4	0.194540044358042\\
20.6	0.193843927553683\\
20.8	0.193127987956798\\
21	0.19241694198052\\
21.2	0.19168637834987\\
21.4	0.191007647561674\\
21.6	0.190285201233041\\
21.8	0.189589151890246\\
22	0.18889567237605\\
22.2	0.188159858538805\\
22.4	0.187459664716891\\
22.6	0.186760932943606\\
22.8	0.18606331198446\\
23	0.185324960940775\\
23.2	0.184629036760834\\
23.4	0.183912978087413\\
23.6	0.183216892133172\\
23.8	0.182500817334508\\
24	0.181784344294719\\
24.2	0.18107652414151\\
24.4	0.18035821969186\\
24.6	0.179693038571351\\
24.8	0.178971097833631\\
25	0.178264992734514\\
};
\end{axis}
\end{tikzpicture}%
        \vspace*{-0.3in}
        \caption{Lerner Index $L$ as a function of $p_t$.}
        \label{fig:tax_lerner_index}
    \end{minipage}
\end{figure*}

\begin{itemize}
    \item The proposed taxation scheme can promote EV penetration and benefit social welfare. Based on \autoref{prop:existence_of_r_hat}, we identify that in this study $r < \hat{r}$ such that setting an appropriate tax rate can facilitate the market expansion. The valet charging demand peaks at $p^*_{t,\lambda} = \HKD13.2$, where $\lambda$ increases by $2.47\%$ and the tax revenue contributes 268 additional public chargers. Furthermore, the social welfare also increases and is maximized at $p^*_{t,SW} = \HKD15.8$. It implies that the taxation scheme will enforce the platform to share part of its profit to provide more charging facilities, which reduces the deadweight loss associated with monopoly pricing. The difference between $p^*_{t,\lambda}$ and $p^*_{t,SW}$ suggests a trade-off between environmental and economic concerns. The city planner may decide the tax rate according to her objective.

    \item The tax regulation can effectively curb the platform's market power. As shown in \autoref{fig:tax_lerner_index}, the markup reduces to 22.01\% at $p_t = p^*_{t,\lambda}$. The underlying reason is that imposing a valet charging tax is equivalent to directly increasing the marginal cost by $p_t$. Yet the elasticity of demand keeps the platform from raising the price proportionally.\footnote{Elasticity of demand equals the reciprocal of the Lerner Index: $E_d = -\dfrac{1}{L} = \dfrac{p_v}{C_m - p_v}$.} This leads to the weaker market power. Therefore, the tax burden primarily falls on the platform instead of customers. At $p_t = p^*_{t,\lambda}$, compared with the case of no regulation, i.e., $p_t = 0$, the price slightly rises by $1.07\%$ (from $\HKD80.28$ to $\HKD81.14$). As opposed to customers, the platform suffers a dramatic profit decline of $64.3\%$ (from $\HKD6469.64/\hr$ to $\HKD2309.96/\hr$), which reveals that the platform is more vulnerable than customers under the proposed taxation scheme.
    
    \item After implementing the proposed regulation, the city planner needs to jointly replan charging infrastructure to achieve higher EV adoption. According to \autoref{fig:tax_optimal_K}, $K^*_{\lambda}$ always increases with $p_t$. Intuitively, the platform will be reluctant to serve more consumers under a higher tax rate due to the unprofitably high marginal cost, which can be directly reflected by the decreasing charger occupancy. The effect of $p_t$ on the occupancy $\rho_c$ is given by
    \begin{equation} \label{eqn:occupancy_derivative}
        \dfrac{\partial \rho_c}{\partial p_t} = \underbrace{\dfrac{t_c}{M} \dfrac{\partial \lambda}{\partial p_t}}_{\text{negative}} + \underbrace{ \bigg(-\dfrac{\lambda}{t_c M^2} \dfrac{\partial M}{\partial p_t} \bigg)}_{\text{positive}} < 0,
    \end{equation}
    
    where the first item captures the effect associated with valet charging demand, and the second item captures the effect associated with charging facility supply. Hence, as illustrated in \autoref{fig:tax_occupancy}, $\rho_c$ goes down in response to the increasing $p_t$. In contrast to the platform, the city planner aims to promote EV adoption. To this end, an effective solution to reducing the marginal cost is to increase the charging station density  (recall \autoref{fig:valet_marginal_cost}), which induces the platform to serve more customers so as to mitigate the profit decline. This explains why $C_m(p_t) - C_m(0) < p_t$ in \autoref{fig:tax_marginal_cost}: denser charging stations partly offset the marginal cost increase associated with the valet charging tax. However, $K$ cannot increase without bounds as the platform will be forced to quite the valet charging market due to prohibitively high fixed cost.
    
    \item As shown in \autoref{fig:tax_supply}, the courier fleet size reduces as the tax rate increases. This is because the regulatory agency tends to build denser charging stations to relieve the platform's tax burden. The higher density will shorten the delivery time and increase the labor efficiency of couriers, thereby inducing the platform to downsize the courier fleet. In this sense, the proposed taxation scheme can facilitate EV penetration at the cost of decreased platform profit and reduced job opportunities.
\end{itemize}

\section{Sensitivity Analysis} \label{section:sensitivity_analysis}

This section presents a sensitivity analysis to verify the robustness of the economic insights derived from the preceding sections.

One of the key observations from our numerical results is that the optimal charging station densities for different stakeholders are not consistent. To investigate how these optimal densities relate to each other under different model parameters, we perturb the model parameters by 50\% in both directions from their nominal value. The tested parameters include $\lambda_0$, $N_0$, $M_0$, $C$, $t_c$, $\alpha$, $\beta$, $\theta$, $\phi$, $\epsilon_1$, $\epsilon_2$, and $\theta$. \autoref{fig:sa_valet_curve_supply}-\ref{fig:sa_valet_curve_demand} present the curves of $N(K)$ and $\lambda(K)$ under varying model parameters, which supports our assumption in \autoref{prop:different_optimal_K} that both $N(K)$ and $\lambda(K)$ first increase and then decrease with $K$. Furthermore, \autoref{fig:sa_valet_comparison} report the comparison among $K^*_N$, $K^*_\lambda$, and $K^*_\Pi$ under varying model parameters. It is clear that $K^*_N < K^*_\lambda < K^*_\Pi$ holds well for extensive parameter perturbation. Specifically, as discussed in \autoref{section:analysis_valet}, the optimal density for the platform is sensitive to the fixed cost $C$. $K^*_\Pi$ becomes smaller than $K^*_\lambda$ only when $C$ is scaled up from the nominal value by 180\%, i.e., raised from \HKD60 to \HKD168. Such a high fixed cost is unrealistic since it indicates that the wage of dedicated coordinators is much higher than the average courier wage. This leads to an unprofitable market, which forces the platform to give up the business. Hence, we conclude that $K^*_N < K^*_\lambda < K^*_\Pi$ is robust to parameter variation in the practical regime.

Another important finding is that the proposed taxation scheme can facilitate the expansion of valet charging market, and the charging infrastructure needs to be replanned to enable such expansion. To understand how the public charger cost $r$ affects this result, we perturb $r$ by $-50\%$ to $+150\%$ from its nominal value. \autoref{fig:sa_tax_perturb_r} shows the optimal tax rate $p^*_{t,\lambda}$, the optimal charging station density $K^*(p^*_{t,\lambda})$, and the maximum valet charging demand $\lambda^*(p^*_{t,\lambda})$ under distinct value of $r$. The results are consistent with \autoref{prop:existence_of_r_hat}. When $r$ is below a certain threshold $\hat{r}$, the taxation scheme can always lead to a higher valet charging demand and a denser charging station network. If $r$ goes beyond the threshold, imposing the tax will result in reduced valet charging demand. In this case, the market degrades to the scenario without the tax regulation since the taxation scheme will not be implemented. Fixing other model parameters in this study, the threshold $\hat{r}$ is found to be \HKD43.75, which is as high as $175\%$ of the nominal value of $r$ and unrealistic in practice. Thereby the effectiveness of the proposed taxation scheme is insensitive to a wide range of $r$.

\section{Conclusion} \label{section:conlusion}

To conclude, this paper explores on-demand valet charging services for EVs, an innovative business model that offers an affordable and convenient EV charging solution for those who do not have a private charger. Valet charging facilitates EV adoption by unleashing the potential of public charging infrastructure. A queuing network model is developed to capture the stochastic matching between customers, couriers, and charging stations. An economic equilibrium model is formulated to predict the market outcome. By analyzing the solution property, we find that the profit-maximizing market equilibrium is always in the normal regime rather than the WGC regime. Based on the mathematical model, we present two case studies to examine how public policies will affect the valet charging market.

At first, we focus on charging infrastructure planning. Numerical results reveal that the optimal charging station densities for different stakeholders are not consistent. Couriers prefer a lower density to shorten their idle time and improve their income. The platform prefers a higher density to reduce the marginal cost and increase its profit. While customers prefer a density in-between that trades off the EV charging convenience and congestion. We also find that the market power associated with monopoly pricing results in an inefficient market outcome, which calls for government interventions.

Second, to regulate the platform and weaken its market power, we propose a taxation scheme that imposes a per-service tax on the platform and then invests the tax revenue in public charging facilities. We show that enforcing a proper tax rate can squeeze the market power and meanwhile prompt the platform to serve more customers as the profit decline can be partly alleviated by the market expansion. Moreover, to sustain the effectiveness of the proposed regulation, the city planner should redeploy a denser charging station network.

This paper also opens avenues that merit further research. Future studies may consider the spatial imbalance between charging demand courier supply, time-variant charging demand, spatiotemporal pricing, and synergy between valet charging and other charging services (such as public charger, battery swapping, vehicle-to-vehicle emergency charging, among others). Another direction lies in the intersection of EV charging and power system. Possible topics include integrating V2G into valet charging, adaptive charging scheduling under stochastic arrival and departure, joint planning of charging and electric infrastructure, etc.

\section*{Acknowledgments} 
This research was supported by Hong Kong Research Grants Council under project HKUST26200420.

\bibliographystyle{unsrt}
\bibliography{valet_ref}

\begin{thebibliography}{10}

\bibitem{reuters_france_2019}
Reuters.
\newblock France to uphold ban on sale of fossil fuel cars by 2040, 2019.
\newblock \url{https://www.reuters.com/article/us-france-autos-idUSKCN1TC1CU}.

\bibitem{guardian_uk_2020}
The Guardian.
\newblock {UK} plans to bring forward ban on fossil fuel vehicles to 2030,
  2020.
\newblock
  \url{http://www.theguardian.com/environment/2020/sep/21/uk-plans-to-bring-forward-ban-on-fossil-fuel-vehicles-to-2030}.

\bibitem{iea_global_2020}
IEA.
\newblock Global {EV} {Outlook} 2020, 2020.
\newblock \url{https://www.iea.org/reports/global-ev-outlook-2020}.

\bibitem{levy_costs}
Jonathan Levy, Isabelle Riu, and Cathy Zoi.
\newblock The {Costs} of {EV} {Fast} {Charging} {Infrastructure} and {Economic}
  {Benefits} to {Rapid} {Scale}-{Up}.
\newblock Technical report, EVgo, 2020.

\bibitem{nyt_charger_2020}
The New~York Times.
\newblock ‘{Charger} {Desert}’ in {Big} {Cities} {Keeps} {Electric} {Cars}
  {From} {Mainstream}, 2020.
\newblock
  \url{https://www.nytimes.com/2020/04/16/business/electric-cars-cities-chargers.html}.

\bibitem{EBHK}
Roadmap on {Popularisation} of {Electric} {Vehicles}.
\newblock Environment Bureau of Hong Kong.
\newblock
  \url{https://www.enb.gov.hk/sites/default/files/pdf/EV_roadmap_eng.pdf}.

\bibitem{PTSS}
{Public} {Transport} {Strategy} {Study}.
\newblock Transport and Housing Bureau of Hong Kong.
\newblock
  \url{https://www.td.gov.hk/filemanager/en/publication/ptss_final_report_eng.pdf}.

\bibitem{caixin_global_chinas_2018}
Caixin Global.
\newblock China’s {Electric} {Vehicle} {Charging} {Stations} {Idle} 85\% of
  {Time}, 2018.
\newblock
  \url{https://www.caixinglobal.com/2018-01-22/chinas-electric-vehicle-charging-stations-idle-85-of-time-101201234.html}.

\bibitem{INL_plugged_2015}
Plugged {In}: {How} {Americans} {Charge} {Their} {Electric} {Vehicles}.
\newblock Idaho National Laboratory, 2015.

\bibitem{xi_simulation-optimization_2013}
Xiaomin Xi, Ramteen Sioshansi, and Vincenzo Marano.
\newblock Simulation–optimization model for location of a public electric
  vehicle charging infrastructure.
\newblock {\em Transportation Research Part D: Transport and Environment},
  22:60--69, July 2013.

\bibitem{yang_data-driven_2017}
Jie Yang, Jing Dong, and Liang Hu.
\newblock A data-driven optimization-based approach for siting and sizing of
  electric taxi charging stations.
\newblock {\em Transportation Research Part C: Emerging Technologies},
  77:462--477, April 2017.

\bibitem{huang_electric_2020}
Yantao Huang and Kara~M. Kockelman.
\newblock Electric vehicle charging station locations: {Elastic} demand,
  station congestion, and network equilibrium.
\newblock {\em Transportation Research Part D: Transport and Environment},
  78:102179, January 2020.

\bibitem{gan_fast-charging_2020}
Xiaoying Gan, Haoxiang Zhang, Gai Hang, Zhida Qin, and Haiming Jin.
\newblock Fast-{Charging} {Station} {Deployment} {Considering} {Elastic}
  {Demand}.
\newblock {\em IEEE Transactions on Transportation Electrification},
  6(1):158--169, March 2020.

\bibitem{mak_infrastructure_2013}
Ho-Yin Mak, Ying Rong, and Zuo-Jun~Max Shen.
\newblock Infrastructure {Planning} for {Electric} {Vehicles} with {Battery}
  {Swapping}.
\newblock {\em Management Science}, 59(7):1557--1575, April 2013.

\bibitem{sarker_optimal_2015}
Mushfiqur~R Sarker, Hrvoje Pandžić, and Miguel~A. Ortega-Vazquez.
\newblock Optimal {Operation} and {Services} {Scheduling} for an {Electric}
  {Vehicle} {Battery} {Swapping} {Station}.
\newblock {\em IEEE Transactions on Power Systems}, 30(2):901--910, March 2015.

\bibitem{widrick_optimal_2016}
Rebecca~S. Widrick, Sarah~G. Nurre, and Matthew~J. Robbins.
\newblock Optimal {Policies} for the {Management} of an {Electric} {Vehicle}
  {Battery} {Swap} {Station}.
\newblock {\em Transportation Science}, 52(1):59--79, October 2016.

\bibitem{shen_optimization_2019}
Zuo-Jun~Max Shen, Bo~Feng, Chao Mao, and Lun Ran.
\newblock Optimization models for electric vehicle service operations: {A}
  literature review.
\newblock {\em Transportation Research Part B: Methodological}, 128:462--477,
  October 2019.

\bibitem{guo_caas_2020}
Shuocheng Guo, Xinwu Qian, and Jun Liu.
\newblock Charging-as-a-{Service}: {On}-demand battery delivery for light-duty
  electric vehicles for mobility service.
\newblock {\em arXiv preprint arXiv:2011.10665}, 2020.

\bibitem{zhang_mobile_2020}
Xu~Zhang, Yue Cao, Linyu Peng, Jichun Li, Naveed Ahmad, and Shengping Yu.
\newblock Mobile {Charging} as a {Service}: {A} {Reservation}-{Based}
  {Approach}.
\newblock {\em IEEE Transactions on Automation Science and Engineering},
  17(4):1976--1988, October 2020.

\bibitem{qiu_charging_2021}
Jiahua Qiu and Lili Du.
\newblock A charging-as-a-service platform for charging electric vehicles on
  the move: New vehicle routing model and solution.
\newblock {\em arXiv preprint arXiv:2104.00730}, 2021.

\bibitem{luxe_demand}
Luxe.
\newblock On-{Demand} {Parking} \& {Valet} {Services}.
\newblock \url{http://luxe.com/tesla}.

\bibitem{castillo_surge_2017}
Juan~Camilo Castillo, Dan Knoepfle, and Glen Weyl.
\newblock Surge pricing solves the wild goose chase.
\newblock In {\em Proceedings of the 2017 ACM Conference on Economics and
  Computation}, page 241–242, 2017.

\bibitem{ditter_effectiveness_2005}
Susan~M Ditter, Randy~W Elder, Ruth~A Shults, David~A Sleet, Richard Compton,
  and James~L Nichols.
\newblock Effectiveness of designated driver programs for reducing
  alcohol-impaired driving: a systematic review.
\newblock {\em American journal of preventive medicine}, 28(5):280--287, 2005.

\bibitem{franke_understanding_2013}
Thomas Franke and Josef~F. Krems.
\newblock Understanding charging behaviour of electric vehicle users.
\newblock {\em Transportation Research Part F: Traffic Psychology and
  Behaviour}, 21:75--89, 2013.

\bibitem{zeng_solving_2020}
Teng Zeng, Hongcai Zhang, and Scott Moura.
\newblock Solving {Overstay} and {Stochasticity} in {PEV} {Charging} {Station}
  {Planning} {With} {Real} {Data}.
\newblock {\em IEEE Transactions on Industrial Informatics}, 16(5):3504--3514,
  May 2020.

\bibitem{hillier_introduction_2004}
Frederick~S. Hillier and Gerald~J. Lieberman.
\newblock {\em Introduction to {Operations} {Research}}.
\newblock McGraw-Hill Higher Education, July 2004.

\bibitem{sakasegawa_approximation_1977}
Hirotaka Sakasegawa.
\newblock An {Approximation} {Formula} $l_q \simeq \alpha \cdot
  \rho^{\beta}/(1-\rho)$.
\newblock {\em Annals of the Institute for Statistical Mathematics}, 29:67--75,
  1977.

\bibitem{arnott_taxi_1996}
Richard Arnott.
\newblock Taxi travel should be subsidized.
\newblock {\em Journal of Urban Economics}, 40(3):316--333, 1996.

\bibitem{li_regulating_2019}
Sen Li, Hamidreza Tavafoghi, Kameshwar Poolla, and Pravin Varaiya.
\newblock Regulating {TNCs}: {Should} {Uber} and {Lyft} set their own rules?
\newblock {\em Transportation Research Part B: Methodological}, 129:193--225,
  November 2019.

\bibitem{ahn_analytical_2015}
Yongjun Ahn and Hwasoo Yeo.
\newblock An {Analytical} {Planning} {Model} to {Estimate} the {Optimal}
  {Density} of {Charging} {Stations} for {Electric} {Vehicles}.
\newblock {\em PloS one}, 10(11):e0141307, November 2015.

\bibitem{train_discrete_2009}
Kenneth~E Train.
\newblock {\em Discrete choice methods with simulation}.
\newblock Cambridge university press, 2009.

\bibitem{banerjee_pricing_2015}
Siddhartha Banerjee, Ramesh Johari, and Carlos Riquelme.
\newblock Pricing in {Ride}-{Sharing} {Platforms}: {A} {Queueing}-{Theoretic}
  {Approach}.
\newblock In {\em Proceedings of the Sixteenth ACM Conference on Economics and
  Computation}, page 639, 2015.

\bibitem{ke_pricing_2020}
Jintao Ke, Hai Yang, Xinwei Li, Hai Wang, and Jieping Ye.
\newblock Pricing and equilibrium in on-demand ride-pooling markets.
\newblock {\em Transportation Research Part B: Methodological}, 139:411--431,
  2020.

\bibitem{marianov_allocating_2005}
Vladimir Marianov, Miguel Rios, and Francisco~Javier Barros.
\newblock Allocating servers to facilities, when demand is elastic to travel
  and waiting times.
\newblock {\em RAIRO - Operations Research}, 39(3):143--162, July 2005.

\bibitem{pindyck_microeconomics_2018}
Robert~S Pindyck and Daniel~L Rubinfeld.
\newblock {\em Microeconomics}.
\newblock The {Pearson} series in economics. Pearson, 2018.

\bibitem{li2021impact}
Sen Li, Kameshwar Poolla, and Pravin Varaiya.
\newblock Impact of congestion charge and minimum wage on tncs: A case study
  for san francisco.
\newblock {\em Transportation Research Part A: Policy and Practice},
  148:237--261, 2021.

\bibitem{osborne_introduction_2004}
Martin~J Osborne et~al.
\newblock {\em An introduction to game theory}, volume~3.
\newblock Oxford university press New York, 2004.

\bibitem{GovHK}
Government of Hong Kong SAR.
\newblock \url{https://www.gov.hk/en/about/abouthk}.

\bibitem{TDHK}
Vehicles {Registration} \& {Licensing}.
\newblock Transport Department of Hong Kong SAR.
\newblock \url{https://www.td.gov.hk}.

\bibitem{EPDHK}
Promotion of {Electric} {Vehicles} in {Hong} {Kong}.
\newblock Environmental Protection Department of Hong Kong SAR.
\newblock \url{https://www.epd.gov.hk}.

\bibitem{HKTC}
Development of {Taxi} {Industry} in {Hong} {Kong}.
\newblock Hong Kong Taxi Council.
\newblock \url{http://hk-tc.org/web/tc/content/15}.

\bibitem{zhou_competitive_2020}
Yaqian Zhou, Hai Yang, Jintao Ke, Hai Wang, and Xinwei Li.
\newblock Competitive ride-sourcing market with a third-party integrator.
\newblock {\em arXiv preprint arXiv:2008.09815}, 2020.

\end{thebibliography}

\newpage
\appendix

\section{\bf{Evaluation of the Approximation Performance}} 
\label{appendix:approximation_evalution}
We use the mean waiting time given by \eqref{eqn:original_response_time} as the ground truth to assess the approximation performance of \eqref{eqn:response_time} at different service rates $\mu$, utilization rates $\rho$, and service capacity $N$. Here the service rate $\mu$ refers to the expected number of customers completing the service process per unit time. The mean service time is then $1/\mu$. The results are reported in the figure below.

\begin{figure}[htbp]
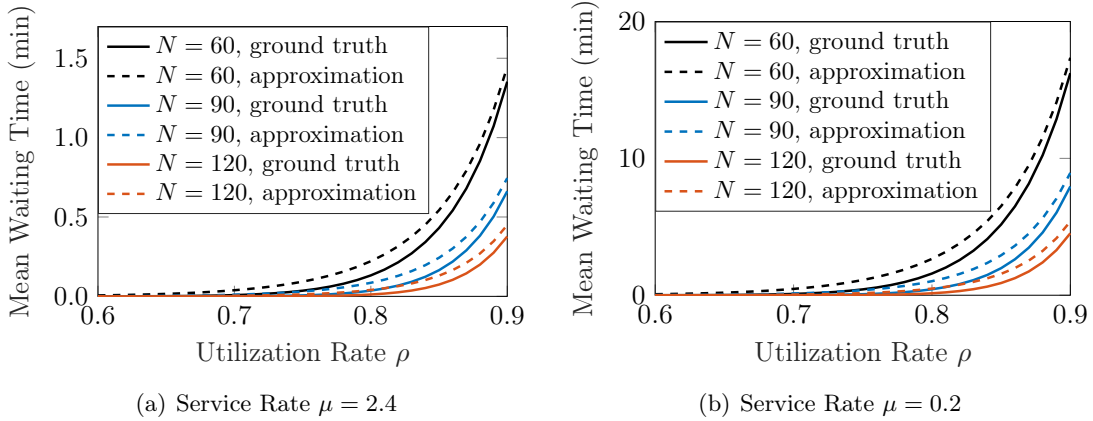

    \centering
    \subfigure[Service Rate $\mu=2.4$]{
        \includestandalone[width=0.4\linewidth]{figure/fig_approximation_evaluation_1}
    }
    \subfigure[Service Rate $\mu=0.2$]{
        \includestandalone[width=0.4\linewidth]{figure/fig_approximation_evaluation_2}
    }    
    \caption{Approximation performance under varying service rate, server capacity, and utilization rate.}
    \label{fig:approximation_evaluation}
\end{figure}

It can be directly observed from \autoref{fig:approximation_evaluation} that \eqref{eqn:response_time} approximates the ground truth remarkably well in different scenarios. The error is negligible over a wide range of $N$ and $\rho$. The percentage of error is slightly higher in light traffic (e.g., $\rho < 0.7$). This is because the ground truth of the expected waiting time is very close to zero. We argue that this would not affect the results derived in the simulation as the absolute value of approximation error is negligible. Thus, \eqref{eqn:response_time} is sufficient to accurately approximate the mean waiting time without causing analytical mistakes.

\section{\bf{Model Parameters Setting}} 
\label{appendix:parameters_calibration}
We use Hong Kong as the prototype of this study. Most of the parameters are selected according to official data and parameters in previous studies. Specifically, we set $A = 1\e 3\text{km}^2$, $\lambda_0 = 6\e 5$, and $M_0 = 3\e 3$ based on the square area of Hong Kong \cite{GovHK}, the current private vehicle stock \cite{TDHK}, and the number of public chargers \cite{EPDHK}, respectively. The number of potential couriers is specified as 125\% of the taxi driver population in Hong Kong, i.e., $N_0 = 5\e 4$ \cite{HKTC}. The per-time percentage of charging demand over all EVs $\tau$ is set to be 1\%. The average charging time is $t_c = 5\hr$ since over $80\%$ of the existing public chargers in Hong Kong are standard or medium level \cite{EPDHK}.\footnote{Standard charger means a charger employing a household type 13A socket. Medium charger means a charger with rated power of or below 20kW.}

We assume that valet charging customers are less sensitive to the pickup time than ride-hailing customers do and consequently tune $\alpha$ to be 50\% of the value of time (VOT) in \cite{zhou_competitive_2020}, i.e., $\alpha = \HKD 60/\hr$. Further, the VOT assigned to $t_w$ and $t_d$ is set as $\beta = \alpha/6 = \HKD 10/\hr$. As couriers pick up EVs by public transit, bicycle, or scooter, the pickup time of valet charging services is typically longer than that of ride-hailing services. In this light, $\phi$ is set to be $25\%$ of that in ride-hailing service, i.e., $\phi = 0.04$ \cite{zhou_competitive_2020}. The parameter $\theta = 0.06$ in \eqref{eqn:delivery_time} is obtained by Monte-Carlo experiment. The procedure is as follows.

\begin{algorithm}[ht]
\SetAlgoLined
\caption{Monte-Carlo simulation to calibrate $\theta$}
Initialize the lower bound $K_{lb}$ and upper bound $K_{ub}$ for $K$;\\
Initialize the average speed $v$;\\
Initialize the sample size $n$;\\
\For{$K \leftarrow K_{lb} \text{ to } K_{ub}$}{
 Suppose the charging station locates at the center of each zone. Calculate its location:
 \[(x_c, y_c) \leftarrow \left(0.5\sqrt{A/K}, 0.5\sqrt{A/K}\right)\]\\
 Randomly generate $n$ customer locations $(x_i, y_i)$ that evenly distributed in the zone\;
 Calculate the average Manhattan distance between customers and charging station:
 \[d_k \leftarrow \dfrac{1}{n}\sum_{i = 1}^n \left | x_i - x_c \right | + \left | y_i - y_c \right |\]\\
 Calculate the average travel time by $t_d^k = \dfrac{d_k}{v}$ and store the data pair $(\sqrt{A/K}, t_d^k)$;
}
Perform linear regression over $(\sqrt{A/K}, t_d^k)$;\\
Return the slope $\theta$.
\end{algorithm}

Parameters in \eqref{eqn:supply_logit} are set as $w_0 = \HKD 110/\hr$ and $\eta = 0.1$ such that the average wage of couriers is close to that of taxi drivers in Hong Kong. Likewise, parameters in \eqref{eqn:demand_logit} are set as $c_s = \HKD 80$, $c_0 = \HKD 75$, $\epsilon_1 = 0.11$, and $\epsilon_2 = 0.1$ such that the occupancy of public chargers maintains around $80\%$. The fixed cost $C = \HKD 60$ is assumed to be close to the hourly wage of couriers.

\section{Proof of \autoref{prop:optimal_condition}}
\label{appendix:proof_prop_optimal_condition}
\begin{proof}
Suppose the optimal demand and supply are $\lambda^*$ and $N^*$. The optimal courier wage, denoted by $w^*$, is then uniquely determined by $w(N^*)$ due to the one-to-one mapping. At $(\lambda^*, N^*)$, equation \eqref{eqn:fixed_point_equation} has two positive roots when strict inequality holds in \eqref{eqn:root_existence_condition}. \autoref{prop:optimal_condition} says that the platform will receive a higher profit by choosing the larger root of \eqref{eqn:fixed_point_equation}. To prove this, we can compare the profits under two different roots.

Given $(\lambda^*, N^*)$, let $(p_v^*, N_i^*, t_r^*, t_p^*, t_w^*)$ and $(p_v^{**}, N_i^{**}, t_r^{**}, t_p^{**}, t_w^{**})$ denote the market outcomes under two distinct roots of $\eqref{eqn:fixed_point_equation}$. Without loss of generality, we assume
\[N_i^* > \dfrac{N^* - 2 \lambda^* t_d}{3} > N_i^{**}.\]
Since $t_p$ decreases with $N_i$, clearly we have $t_p^* < t_p^{**}$. The response time $t_r$ is increasing in the service time of delivery queue, i.e., $t_p + t_d$. And $t_d$ is exogenous as it only depends $K$. Hence, we also have $t_r^* < t_r^{**}$. Besides, the waiting time $t_w$ only depends on the demand level, which indicates $t_w^* = t_w^{**}$. Furthermore, the valet charging costs are the same in both cases, i.e.,
\[p_v^* + \alpha (t_r^* + t_p^*) + \beta (2 t_d + t_w^*) = p_v^{**} + \alpha (t_r^{**} + t_p^{**}) + \beta (2 t_d + t_w^{**}).\]
With $t_r^* < t_r^{**}$, $t_p^* < t_p^{**}$, and $t_w^* = t_w^{**}$, we must have $p_v^* > p_v^{**}$, which implies
\[p_v^* \lambda^* - N^* w^* - KC > p_v^{**} \lambda^* - N^* w^* - KC.\]
This shows choosing the larger positive root of \eqref{eqn:fixed_point_equation} will lead to a higher profit. Therefore, $N_i \geq (N - 2 \lambda t_d)/3$ always holds at the profit-maximizing optimum.

\end{proof}

\section{Definitions of Marginal Cost and Marginal Revenue}
\label{appendix:definition_of_Cm_Rm}
Let $c_v(\lambda)$ and $w(N)$ represent the inverse demand and supply model, respectively. The first-order optimality condition for \eqref{eqn:valet_profit_maximization_unconstrained} reads as follows:
\begin{align}
    \dfrac{\partial \Pi}{\partial \lambda} &= p_v + \lambda \left[c_v^\prime(\lambda) - \alpha \left(\dfrac{\partial t_r}{\partial \lambda} + \dfrac{\partial t_p}{\partial \lambda}\right) - \beta \dfrac{\partial t_w}{\partial \lambda} \right] = 0, \label{eqn:FOC_1}\\
    \dfrac{\partial \Pi}{\partial N} &= -\alpha \lambda \left(\dfrac{\partial t_r}{\partial N} + \dfrac{\partial t_p}{\partial N}\right) - \Big(w(N) + N w^\prime(N)\Big) = 0, \label{eqn:FOC_2}
\end{align}
where $p_v$ is given by
\[p_v = c_v(\lambda) - \alpha (t_r + t_p) - \beta (2 t_d + t_w).\]
Equation \eqref{eqn:FOC_2} implies the relation between $\lambda$ and $N$ at the optimum. This relation can be summarized as $N = N(\lambda)$, which transforms \eqref{eqn:valet_profit_maximization_unconstrained} into a univariate maximization problem. As such, the marginal cost and marginal revenue can be defined as follows:
\begin{equation} \label{eqn:marginal_cost}
    C_m = \dfrac{\partial N w}{\partial \lambda} = \Big(w(N) + N w^\prime(N)\Big) \dfrac{\partial N}{\partial \lambda},
\end{equation}
\begin{align} 
    \begin{split}
        R_m = \dfrac{\partial \lambda p_v}{\partial \lambda} ={}& c_v(\lambda) - \alpha (t_r + t_p) - \beta (2 t_d + t_w) \\
        & + \lambda \left[c_v^\prime(\lambda) - \alpha \left(\dfrac{\partial t_r}{\partial \lambda} + \dfrac{\partial t_r}{\partial N}\dfrac{\partial N}{\partial \lambda} + \dfrac{\partial t_p}{\partial \lambda} + \dfrac{\partial t_p}{\partial N}\dfrac{\partial N}{\partial \lambda}\right) - \beta \dfrac{\mathrm{d } t_w}{\mathrm{d } \lambda} \right],
    \end{split} \label{eqn:marginal_revenue}
\end{align}
where $\dfrac{\partial N}{\partial \lambda}$ is derived by applying implicit function theorem on \eqref{eqn:FOC_2}.

\section{Proof of \autoref{prop:different_optimal_K}}
\label{appendix:proof_prop_different_optimal_K}
\begin{proof}
    Based on the first-order condition \eqref{eqn:FOC_2}, we have
    \begin{equation} \label{eqn:FOC_rewritten}
        w(N) + N w^\prime(N) = -\alpha \lambda \left( \dfrac{\partial t_p}{\partial N} + \dfrac{\partial t_r}{\partial N}\right)
    \end{equation}    
    at the optimum. The left-hand side (LHS) of \eqref{eqn:FOC_rewritten} is independently of $\lambda$ and $K$, but increasing in $N$ since $N w(N)$ is convex. We next proceed to analyze the monotonicity of the right-hand side (RHS) of \eqref{eqn:FOC_rewritten}.
    \begin{enumerate}
        \item [(i)] The expression of $\dfrac{\partial t_p}{\partial N}$ at the RHS is as follows:
        \[\dfrac{\partial t_p}{\partial N} = \dfrac{-\frac{1}{2} t_p}{N_i - \lambda t_p} = \dfrac{-\frac{1}{2} t_p}{N - \lambda (3 t_p + 2 t_d)} < 0.\]
        Based on \autoref{prop:optimal_condition} and \autoref{lemma:WGC}, at the optimum, $t_p$ increases with $\lambda$ and decreases with $N$. Hence, $\dfrac{\partial t_p}{\partial N}$ is increasing in $N$ but decreasing in $\lambda$. In addition, the partial derivative of $N_i$ with respect to $t_d$ is
        \[\dfrac{\partial N_i}{\partial t_d} = \dfrac{-2\lambda}{1 + 2\lambda \dfrac{\mathrm{d} t_p}{\mathrm{d} N_i}} = \dfrac{-2 \lambda N_i}{N_i - \lambda t_p} < 0,\]
        which implies $\dfrac{\partial t_p}{\partial K} < 0$ since $\dfrac{\mathrm{d} t_p}{\mathrm{d} N_i} < 0$ and $\dfrac{\mathrm{d} t_d}{\mathrm{d} K} < 0$. Given this, we have $\dfrac{\partial t_p}{\partial N}$ is increasing in $K$.
        
        \item [(ii)] The expression of $\dfrac{\partial t_r}{\partial N}$ at the RHS is as follows:
        \begin{multline*}
            \dfrac{\partial t_r}{\partial N} = - \underbrace{
            \bigg( \dfrac{t_r}{1 - \rho_d} + (t_p + t_d) \dfrac{\sqrt{2N+2}}{N} \dfrac{\rho_d^{\sqrt{2N+2}-2}}{1 - \rho_d} \bigg)
            }_{\Psi_1 > 0} 
            \underbrace{
            \left( \dfrac{\rho_d}{N} + \dfrac{\lambda t_p}{N (N - 2\lambda t_d - 3\lambda t_p)} \right)
            }_{\Psi_2 > 0} \\
            - \underbrace{
             (t_p + t_d) \dfrac{\rho_d \log \dfrac{1}{\rho_d}}{1 - \rho_d} \dfrac{\rho_d^{\sqrt{2N+2}-2}}{N \sqrt{2N+2}}
            }_{\Psi_3 > 0}.
        \end{multline*}
        \begin{itemize}
            \item If $N$ increases while $\lambda$ and $K$ are fixed, $\rho_d$, $t_p$, and $t_r$ will decrease. $\Psi_1$, $\Psi_2$, and $\Psi_3$ will also decrease. Thus, $\dfrac{\partial t_r}{\partial N}$ will increase.
            \item If $\lambda$ increases while $N$ and $K$ are fixed, $\rho_d$, $t_p$, and $t_r$ will increase. $\Psi_1$, $\Psi_2$, and $\Psi_3$ will also increase. Thus, $\dfrac{\partial t_r}{\partial N}$ will decrease.
            \item If $K$ increases while $N$ and $\lambda$ are fixed, $t_d$, $\rho_d$, $t_p$, and $t_r$ will decrease. $\Psi_1$, $\Psi_2$, and $\Psi_3$ will also decrease. Thus, $\dfrac{\partial t_r}{\partial N}$ will increase.
        \end{itemize}
        Overall, $\dfrac{\partial t_r}{\partial N}$ is increasing in $N$ and $K$ but decreasing in $\lambda$.
    \end{enumerate}
    Based on the above analysis, we pin down that the RHS of \eqref{eqn:FOC_rewritten} is decreasing in $N$ and $K$ but increasing in $\lambda$. Let $\nabla_+ N(K)$ and $\nabla_+ \lambda(K)$ denote the right-hand derivatives of $N$ and $\lambda$ with respect to $K$. As $N$ is maximized at $K = K^*_N$, clearly we have $\nabla_+ N(K^*_N) = 0$. When $K$ slightly increases from $K = K^*_N$, $N$ and also $w(N) + N w^\prime(N)$ remain unchanged. To guarantee the equality of \eqref{eqn:FOC_rewritten}, $\lambda$ must increase, i.e., $\nabla_+ \lambda(K^*_N) > 0$. This suffices to show $K^*_{\lambda} > K^*_N$.
\end{proof}

\section{Proof of \autoref{prop:existence_of_r_hat}}
\label{appendix:proof_prop_effect_of_cost}
\begin{proof}
    Under the taxation scheme, the marginal cost \eqref{eqn:marginal_cost} becomes
    \begin{equation} \label{eqn:marginal_cost_tax}
        C_m = \dfrac{\partial}{\partial \lambda}(\lambda p_t + N w) = p_t + \Big(w(N) + N w^\prime(N)\Big) \dfrac{\partial N}{\partial \lambda},
    \end{equation}
    Obviously, the marginal cost $C_m$ depends on $\lambda$, $p_t$, and $K$. When $\lambda$ and $K$ are fixed, imposing the tax is equivalent to raising the marginal cost by $p_t$, i.e.,
    \[C_m(\lambda, p_t, K) - C_m(\lambda, 0, K) = p_t.\]
    And the marginal revenue \eqref{eqn:marginal_revenue} becomes
    \begin{align} 
        \begin{split}
            R_m ={}& \dfrac{\partial \lambda p_v}{\partial \lambda} \\
            ={}& c_v(\lambda) - \alpha (t_r + t_p) - \beta (2 t_d + t_w) \\
            & + \lambda \left[c_v^\prime(\lambda) - \alpha \left(\dfrac{\partial t_r}{\partial \lambda} + \dfrac{\partial t_r}{\partial N}\dfrac{\partial N}{\partial \lambda} + \dfrac{\partial t_p}{\partial \lambda} + \dfrac{\partial t_p}{\partial N}\dfrac{\partial N}{\partial \lambda}\right) - \beta \left( \dfrac{\partial t_w}{\partial \lambda} + \dfrac{\partial t_w}{\partial M} \dfrac{\mathrm{d } M}{\mathrm{d } \lambda} \right) \right],
        \end{split} \label{eqn:marginal_revenue_tax}
    \end{align}
    which also depends on $\lambda$, $p_t$, and $K$. For simplicity, we let $\widetilde{R}_m = -\beta t_w -\beta \lambda \left(\dfrac{\partial t_w}{\partial \lambda} + \dfrac{\partial t_w}{\partial M} \dfrac{\mathrm{d } M}{\mathrm{d } \lambda} \right)$ denote the terms in \eqref{eqn:marginal_revenue_tax} that depend on $p_t$. When $\lambda$ and $K$ are fixed, imposing the tax is equivalent to changing the marginal revenue by
    \[R_m(\lambda, p_t, K) - R_m(\lambda, 0, K) = \widetilde{R}_m(\lambda, p_t, K) - \widetilde{R}_m(\lambda, 0, K),\]
    where $\widetilde{R}_m(\lambda, 0, K) = -\beta \left(t_w + \lambda \dfrac{\mathrm{d } t_w}{\mathrm{d } \lambda}\right) \Big|_{M = M_0}$ since \eqref{eqn:marginal_revenue_tax} is reduced to \eqref{eqn:marginal_revenue} at $p_t = 0$. In view of the fact that the platform's profit is maximized when the marginal cost equals the marginal revenue, we show the existence of $\hat{r}$ by comparing $C_m$ and $R_m$ in two extreme cases.
    \begin{enumerate}
        \item [(i)] Fix $\lambda = \lambda^*(0)$ and $K = K^*(0)$. When $r$ goes to zero, the tax revenue can contribute an infinite number of public chargers. Since $\lambda$ is strictly upper bounded by the maximal labor supply $N_0$, we have $\lim_{r \rightarrow 0} t_w = 0$ and therefore
        \[\lim_{r \rightarrow 0} \widetilde{R}_m(\lambda^*(0), p_t, K^*(0)) = 0, \quad \forall p_t > 0.\]
        In this case, charging the tax $p_t > 0$ will change $C_m$ and $R_m$ by
        \begin{align}
            \Delta C_m &= C_m(\lambda^*(0), p_t, K^*(0)) - C_m(\lambda^*(0), 0, K^*(0)) = p_t > 0, \\
            \begin{split}
                \Delta R_m &= \widetilde{R}_m(\lambda^*(0), p_t, K^*(0)) - \widetilde{R}_m(\lambda^*(0), 0, K^*(0))\\
                &= \beta \left( t_w + \lambda \dfrac{\mathrm{d } t_w}{\mathrm{d } \lambda} \right) \Bigg |_{\lambda = \lambda^*(0), K = K^*(0), M = M_0} > 0.
            \end{split}
        \end{align}
        Clearly, as $r \rightarrow 0$, $\Delta R_m$ is a constant for any $p_t > 0$. Additionally, the platform's profit becomes $\Pi(0, K^*(0)) + (\beta t_w - p_t)\lambda^*(0)$. Knowing that $\Pi(0, K^*(0)) \geq 0$, the profitability constraint \eqref{eqn:tax_constraint_profitability} yields $p_t \leq \beta t_w$. Therefore, we can always find some $p_t$ satisfying
        \[0 < p_t < \min \left\{\Delta R_m, \beta t_w \right\} \]
        such that $\Delta C_m < \Delta R_m$ and thus
        \[R_m(\lambda^*(0), p_t, K^*(0)) > C_m(\lambda^*(0), p_t, K^*(0)).\]
        This implies that the platform can improve its profit by further increasing $\lambda$, i.e., $\lambda(p_t, K^*(0)) > \lambda^*(0)$. Finally, we let $K = K^*(p_t)$ and obtain
        \[\lambda^*(p_t) = \lambda(p_t, K^*(p_t)) \geq \lambda(p_t, K^*(0)) > \lambda^*(0).\]
        
        \item [(ii)] Fix $\lambda = \hat{\lambda} = \lambda(0, K^*(p_t))$ and $K = K^*(p_t)$, where $p_t > 0$ is any arbitrary tax rate. When $r$ goes to infinity, under any finite $p_t$ and $\lambda$, we have $\lim_{r \rightarrow \infty} M_0 + \lambda p_t/r = M_0$ and 
        \[\lim_{r \rightarrow \infty} \widetilde{R}_m(\hat{\lambda}, p_t, K^*(p_t)) = \widetilde{R}_m(\hat{\lambda}, 0, K^*(p_t)) = -\beta \left( t_w + \lambda \dfrac{\mathrm{d } t_w}{\mathrm{d } \lambda} \right) \Bigg|_{\lambda = \hat{\lambda}, K = K^*(p_t), M = M_0}.\]
        In this case, any $p_t > 0$ will change $C_m$ and $R_m$ by
        \begin{align}
            \Delta C_m &= C_m(\hat{\lambda}, p_t, K^*(p_t)) - C_m(\hat{\lambda}, 0, K^*(p_t)) = p_t > 0, \\
            \Delta R_m &= \widetilde{R}_m(\hat{\lambda}, p_t, K^*(p_t)) - \widetilde{R}_m(\hat{\lambda}, 0, K^*(p_t)) = 0,
        \end{align}
        meaning that
        \[R_m(\hat{\lambda}, p_t, K^*(p_t)) < C_m(\hat{\lambda}, p_t, K^*(p_t)).\]
        This implies that the platform can improve its profit by decreasing $\lambda$, i.e., $\lambda^*(p_t) < \hat{\lambda} = \lambda(0, K^*(p_t))$. Finally, we let $K = K^*(0)$ and obtain
        \[\lambda^*(p_t) < \lambda(0, K^*(p_t)) \leq \lambda(0, K^*(0)) = \lambda^*(0).\]
    \end{enumerate}
    Due to continuity, these two cases suffice to show the existence of $\hat{r}_2 \geq \hat{r}_1 > 0$ such that
    \begin{enumerate}
        \item [(i)] if $r \in (0, \hat{r}_1), \exists p_t > 0, \lambda^*(p_t) > \lambda^*(0)$,
        \item [(ii)] if $r \in (\hat{r}_2, +\infty), \forall p_t > 0, \lambda^*(p_t) < \lambda^*(0)$.
    \end{enumerate}
    The complement of the first statement is that if $r \in (\hat{r}_1, \infty)$, $\forall p_t > 0, \lambda^*(p_t) \leq \lambda^*(0)$. We next show that the equality will not hold and therefore $\hat{r} = \hat{r}_1 = \hat{r}_2$.
    
    Assume that when $r = \tilde{r}_1 > \hat{r}_1$, there exists $\tilde{p}_t > 0$ satisfying $\lambda^*(\tilde{p}_t) = \lambda^*(0)$. It follows that
    \[C_m(\lambda^*(0), \tilde{p}_t, K^*(\tilde{p}_t)) \big|_{r = \tilde{r}_1} = R_m(\lambda^*(0), \tilde{p}_t, K^*(\tilde{p}_t)) \big|_{r = \tilde{r}_1}.\]
    Now fix $p_t = \tilde{p}_t$ and $K = K^*(\tilde{p}_t)$. Let $r = \tilde{r}_2 \in (\hat{r}_1, \tilde{r}_1)$. We first have
    \[C_m(\lambda^*(0), \tilde{p}_t, K^*(\tilde{p}_t)) \big|_{r = \tilde{r}_2} = C_m(\lambda^*(0), \tilde{p}_t, K^*(\tilde{p}_t)) \big|_{r = \tilde{r}_1}\]
    since $C_m$ is independent of $r$. While the marginal revenue increases, i.e.,
    \[R_m(\lambda^*(0), \tilde{p}_t, K^*(\tilde{p}_t)) \big|_{r = \tilde{r}_2} > R_m(\lambda^*(0), \tilde{p}_t, K^*(\tilde{p}_t)) \big|_{r = \tilde{r}_1},\]
    because $R_m$ is decreasing in $r$ when $\lambda$, $p_t$, and $K$ are fixed. Thus, we obtain
    \[R_m(\lambda^*(0), \tilde{p}_t, K^*(\tilde{p}_t)) \big|_{r = \tilde{r}_2} > C_m(\lambda^*(0), \tilde{p}_t, K^*(\tilde{p}_t)) \big|_{r = \tilde{r}_2},\]
    which implies $\lambda^*(\tilde{p}_t) > \lambda^*(0)$ and thereby $\tilde{r}_2 \in (0, \hat{r}_1)$. It contradicts our assumption that $\tilde{r}_2 > \hat{r}_1$. Hence, if $r \in (\hat{r}_1, \infty)$, $\forall p_t > 0, \lambda^*(p_t) < \lambda^*(0)$, and $\hat{r} = \hat{r}_1 = \hat{r}_2$ follows. The proof is completed.
\end{proof}

\section{Results of Sensitivity Analysis}
\label{appendix:sensitivity_analysis_results}

\begin{figure}[ht]
    \centering
    \subfigure[Perturbing $\lambda_0$]{\includestandalone[width=0.325\linewidth]{figure/fig_sa_curve_lambda0_supply}}
    \hfill
    \subfigure[Perturbing $N_0$]{\includestandalone[width=0.325\linewidth]{figure/fig_sa_curve_N0_supply}}
    \hfill
    \subfigure[Perturbing $M_0$]{\includestandalone[width=0.325\linewidth]{figure/fig_sa_curve_M0_supply}}

    \subfigure[Perturbing $t_c$]{\includestandalone[width=0.325\linewidth]{figure/fig_sa_curve_tc_supply}}
    \hfill
    \subfigure[Perturbing $\alpha$]{\includestandalone[width=0.325\linewidth]{figure/fig_sa_curve_alpha_supply}}    
    \hfill
    \subfigure[Perturbing $\beta$]{\includestandalone[width=0.325\linewidth]{figure/fig_sa_curve_beta_supply}}
    
    \subfigure[Perturbing $\theta$]{\includestandalone[width=0.325\linewidth]{figure/fig_sa_curve_theta_supply}}
    \hfill
    \subfigure[Perturbing $\phi$]{\includestandalone[width=0.325\linewidth]{figure/fig_sa_curve_phi_supply}}
    \hfill
    \subfigure[Perturbing $\eta$]{\includestandalone[width=0.325\linewidth]{figure/fig_sa_curve_eta_supply}}
    
    \subfigure[Perturbing $\epsilon_1$]{\includestandalone[width=0.325\linewidth]{figure/fig_sa_curve_epsilon1_supply}}
    \subfigure[Perturbing $\epsilon_2$]{\includestandalone[width=0.325\linewidth]{figure/fig_sa_curve_epsilon2_supply}}
    
    \caption{The curve of $N(K)$ under distinct value of $\lambda_0$, $N_0$, $M_0$, $t_c$, $\alpha$, $\beta$, $\theta$, $\phi$, $\eta$, $\epsilon_1$, and $\epsilon_2$, respectively.}
    \label{fig:sa_valet_curve_supply}
\end{figure}

\begin{figure}[ht]
    \centering
    \subfigure[Perturbing $\lambda_0$]{\includestandalone[width=0.325\linewidth]{figure/fig_sa_curve_lambda0_demand}}
    \hfill
    \subfigure[Perturbing $N_0$]{\includestandalone[width=0.325\linewidth]{figure/fig_sa_curve_N0_demand}}
    \hfill
    \subfigure[Perturbing $M_0$]{\includestandalone[width=0.325\linewidth]{figure/fig_sa_curve_M0_demand}}

    \subfigure[Perturbing $t_c$]{\includestandalone[width=0.325\linewidth]{figure/fig_sa_curve_tc_demand}}
    \hfill
    \subfigure[Perturbing $\alpha$]{\includestandalone[width=0.325\linewidth]{figure/fig_sa_curve_alpha_demand}}    
    \hfill
    \subfigure[Perturbing $\beta$]{\includestandalone[width=0.325\linewidth]{figure/fig_sa_curve_beta_demand}}
    
    \subfigure[Perturbing $\theta$]{\includestandalone[width=0.325\linewidth]{figure/fig_sa_curve_theta_demand}}
    \hfill
    \subfigure[Perturbing $\phi$]{\includestandalone[width=0.325\linewidth]{figure/fig_sa_curve_phi_demand}}
    \hfill
    \subfigure[Perturbing $\eta$]{\includestandalone[width=0.325\linewidth]{figure/fig_sa_curve_eta_demand}}
    
    \subfigure[Perturbing $\epsilon_1$]{\includestandalone[width=0.325\linewidth]{figure/fig_sa_curve_epsilon1_demand}}
    \subfigure[Perturbing $\epsilon_2$]{\includestandalone[width=0.325\linewidth]{figure/fig_sa_curve_epsilon2_demand}}
    
    \caption{The curve of $\lambda(K)$ under distinct value of $\lambda_0$, $N_0$, $M_0$, $t_c$, $\alpha$, $\beta$, $\theta$, $\phi$, $\eta$, $\epsilon_1$, and $\epsilon_2$, respectively.}
    \label{fig:sa_valet_curve_demand}
\end{figure}

\begin{figure}[ht]
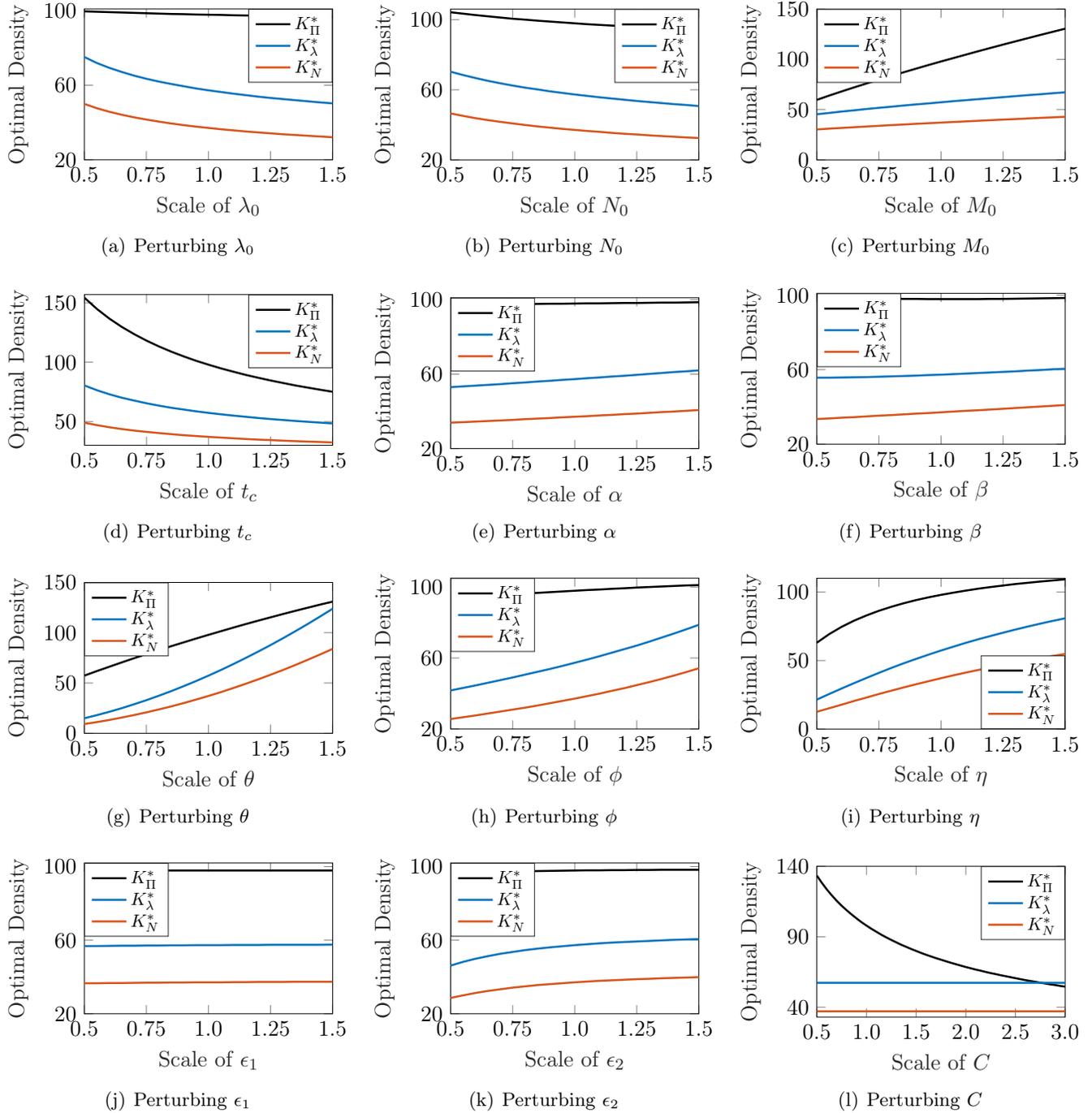

    \centering
    \subfigure[Perturbing $\lambda_0$]{\includestandalone[width=0.325\linewidth]{figure/fig_sa_valet_lambda0}}
    \hfill
    \subfigure[Perturbing $N_0$]{\includestandalone[width=0.325\linewidth]{figure/fig_sa_valet_N0}}
    \hfill
    \subfigure[Perturbing $M_0$]{\includestandalone[width=0.325\linewidth]{figure/fig_sa_valet_M0}}

    \subfigure[Perturbing $t_c$]{\includestandalone[width=0.325\linewidth]{figure/fig_sa_valet_tc}}
    \hfill
    \subfigure[Perturbing $\alpha$]{\includestandalone[width=0.325\linewidth]{figure/fig_sa_valet_alpha}}    
    \hfill
    \subfigure[Perturbing $\beta$]{\includestandalone[width=0.325\linewidth]{figure/fig_sa_valet_beta}}
    
    \subfigure[Perturbing $\theta$]{\includestandalone[width=0.325\linewidth]{figure/fig_sa_valet_theta}}
    \hfill
    \subfigure[Perturbing $\phi$]{\includestandalone[width=0.325\linewidth]{figure/fig_sa_valet_phi}}
    \hfill
    \subfigure[Perturbing $\eta$]{\includestandalone[width=0.325\linewidth]{figure/fig_sa_valet_eta}}
    
    \subfigure[Perturbing $\epsilon_1$]{\includestandalone[width=0.325\linewidth]{figure/fig_sa_valet_epsilon1}}
    \hfill
    \subfigure[Perturbing $\epsilon_2$]{\includestandalone[width=0.325\linewidth]{figure/fig_sa_valet_epsilon2}}
    \hfill
    \subfigure[Perturbing $C$]{\includestandalone[width=0.325\linewidth]{figure/fig_sa_valet_C}}    
    
    \caption{Comparison among $K^*_N$, $K^*_\lambda$, and $K^*_\Pi$ under distinct value of $\lambda_0$, $N_0$, $M_0$, $t_c$, $\alpha$, $\beta$, $\theta$, $\phi$, $\eta$, $\epsilon_1$, $\epsilon_2$, $C$, respectively.}
    \label{fig:sa_valet_comparison}
\end{figure}

\begin{figure}[t]
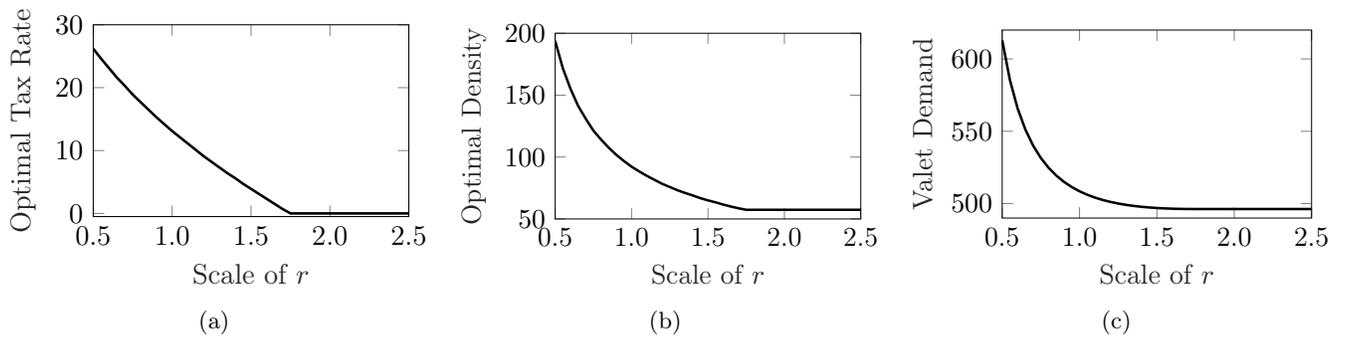

    \centering
    \subfigure[]{\includestandalone[width=0.325\linewidth]{figure/fig_sa_tax_pt}}
    \hfill
    \subfigure[]{\includestandalone[width=0.325\linewidth]{figure/fig_sa_tax_K}}
    \hfill
    \subfigure[]{\includestandalone[width=0.325\linewidth]{figure/fig_sa_tax_lambda}}
    \caption{(a) Optimal tax rate $p^*_{t,\lambda}$ under varying $r$; (b) Optimal density $K^*(p^*_{t,\lambda})$ under varying $r$; (c) Maximum demand $\lambda^*(p^*_{t,\lambda})$ under varying $r$.}
    \label{fig:sa_tax_perturb_r}
\end{figure}

\end{document}